\documentclass[reqno]{amsart}
\usepackage{amsfonts}
\usepackage{a4wide}
\usepackage{cite}
\usepackage[linktocpage]{hyperref}
\usepackage{cleveref}



\usepackage{amsmath,amssymb,amsthm,amsfonts}
\usepackage{mathrsfs}
\usepackage{bbm}



\usepackage{color}

\newtheorem{lemma}{Lemma}[section]
\newtheorem{theorem}{Theorem}[section]

\numberwithin{equation}{section}

\arraycolsep=1.5pt

\newcommand{\dis}{\displaystyle}

\newcommand{\R}{\mathbb{R}}

\renewcommand{\S}{\mathbb{S}}
\newcommand{\T}{\mathbb{T}}


\newcommand{\CE}{\mathcal{E}}

\newcommand{\para}{\shortparallel}

\newcommand{\na}{\nabla}

\newcommand{\al}{\alpha}
\newcommand{\be}{\beta}
\newcommand{\ga}{\gamma}

\newcommand{\la}{\lambda}

\newcommand{\pa}{\partial}

\newcommand{\eps}{\epsilon}

\newcommand{\Ga}{\Gamma}

\begin{document}
\title[Boltzmann equation for soft potentials]{Large amplitude solutions in $L^p_vL^\infty_TL^\infty_x$ to the Boltzmann equation for soft potentials}
\author[Z.~G.~Li]{Zongguang LI}

\begin{abstract}
In this paper we consider the Cauchy problem on the angular cutoff Boltzmann equation near global Maxwillians for soft potentials either in the whole space or in the torus. We establish the existence of global unique mild solutions in the space $L^p_vL^{\infty}_{T}L^{\infty}_{x}$ with polynomial velocity weights for suitably large $p\leq \infty$, whenever for the initial perturbation the weighted $L^p_vL^{\infty}_x$ norm can be arbitrarily large but the $L^1_xL^\infty_v$ norm and the defect mass, energy and entropy are sufficiently small. The proof is based on the local in time existence as well as the uniform a priori estimates via an interplay in $L^p_vL^{\infty}_{T}L^{\infty}_{x}$ and $L^{\infty}_{T}L^{\infty}_{x}L^1_v$.
\end{abstract}
\maketitle
\thispagestyle{empty}

\tableofcontents

\section{Introduction}
We are concerned with the Cauchy problem on the Boltzmann equation
\begin{eqnarray}\label{BE}
&\dis \pa_tF+v\cdot \na_x F=Q(F,F),   \quad &\dis F(0,x,v)=F_0(x,v),
\end{eqnarray}
where $F(t,x,v)\geq0$ is the density distribution function of gas particles with position $x\in \Omega=\R^3$ or $\T^3$ and velocity $v\in \R^3$ at time $t\geq 0$. The bilinear collision operator $Q$ acting only on velocity variable is given by
\begin{equation*}
Q(G,F)(v)=\int_{\R^3}\int_{\S^2}B(v-u,\theta)\left[ G(u')F(v')-G(u)F(v)\right]d\omega du.
\end{equation*}
In this paper, we consider soft potentials under the Grad's angular cutoff assumption. Thus, the collision kernel $B(v-u,\theta)$ takes the form of
\begin{equation}\label{ass.ck}
B(v-u,\theta)=|v-u|^\gamma b(\theta),
\end{equation}
where $-3<\ga<0$ and $0\leq b(\theta) \leq C|\cos \theta|$ for some positive constant $C$ with $\cos \theta=\frac{(v-u)\cdot \omega}{|v-u|}$. The post-collision velocities $v'$ and $u'$ satisfy
\begin{align}\label{velocity}
\begin{split}
v'=v-\left[(v-u)\cdot \omega \right]\omega, \quad &u'=u+\left[(v-u)\cdot \omega \right]\omega,\\
u'+v'=u+v,\quad |u'|^2&+|v'|^2=|u|^2+|v|^2.
\end{split}
\end{align}

Let the global Maxwillian $\mu$ be denoted by
$$
\mu(v)=\frac{1}{(2\pi)^\frac{3}{2}}\exp \left( -\frac{|v|^2}{2} \right).
$$
Moreover, we assume that the following conservation laws and the entropy inequality hold for any solution $F(t,x,v)$ to \eqref{BE} respectively:
\begin{align} 
M_0:=\int_{\Omega}\int_{\R^3} \{F(t,x,v)-\mu(v) \}dvdx&=\int_{\Omega}\int_{\R^3} \{F_0(x,v)-\mu(v) \}dvdx, \label{M}\\
J_0:=\int_{\Omega}\int_{\R^3} v\{F(t,x,v)-\mu(v) \}dvdx&=\int_{\Omega}\int_{\R^3} v\{F_0(x,v)-\mu(v) \}dvdx,\notag
\\
E_0:=\int_{\Omega}\int_{\R^3} |v|^2\{F(t,x,v)-\mu(v) \}dvdx&=\int_{\Omega}\int_{\R^3} |v|^2\{F_0(x,v)-\mu(v) \}dvdx,\label{E}
\end{align}
and 
\begin{align}
\int_{\Omega}\int_{\R^3} \{ F(t,x,v)\log{F(t,x,v)}&-\mu(v)\log{\mu(v)}\}dvdx\notag\\
&\leq \int_{\Omega}\int_{\R^3} \left\{F_0(x,v)\log{F_0}(x,v)-\mu(v)\log{\mu(v)}\right\}dvdx.
\label{H}
\end{align}
For given initial data $F_0(x,v)$ we call $M_0$, $J_0$, $E_0$ and $\iint (F_0\ln F_0-\mu \ln \mu)$ by the defect mass, momentum, energy and entropy, respectively. Using the similar notations as \cite{DHWY}, we define
$$
\CE(F(t)):=\int_{\Omega}\int_{\R^3} \left\{ F(t,x,v)\log{F(t,x,v)}-\mu(v)\log{\mu(v)}\right\}dvdx+(\frac{3}{2}\log (2\pi)-1)M_0+\frac{1}{2}E_0,  
$$
with the initial datum
$\CE(F_0):=\CE(F(0))$. Note that it can be verified that $\CE(F(t))\geq 0$ for any $t\geq 0$, in particular, $\CE(F_0)\geq 0$.

The Boltzmann equation, which is a fundamental mathematical model in collisional kinetic theory, describes the behavior of rarefied gas in non-equilibrium state. There are extensive literatures for the initial and/or boundary value problems of the Boltzmann equation, e.g. \cite{CIP,Villani} and the references therein. The well-known global existence result of renormalized solutions for general $L^1_{x,v}$ initial data with finite mass, energy and entropy was proved by DiPerna-Lions \cite{DL} where the uniqueness of such solutions remains unknown. In the perturbation framework near global Maxwellians, Grad \cite{Grad} studied the linearized operator and Ukai \cite{Ukai} developed the spatially inhomogeneous well-posedness theory by the spectral analysis and the bootstrap argument, see also \cite{NI,Shizuta,UY}. For the enormous works of the linearized operator, interested readers may also refer to Ellis-Pinsky \cite{EP}, Baranger-Mouhot \cite{BM} and the references therein. The energy method in Sobolev spaces was developed through the macro-micro decomposition by Liu-Yang-Yu \cite{LYY} and Guo \cite{Guo04}.

In contrast with the hard potentials, the collision frequency $\nu(v)\sim(1+|v|)^\ga$ in case of soft potentials  $-3<\ga<0$ has no strictly positive lower bound and we are lack of the spectral gap of the linearized operator. For $-1<\ga<0$, based on the decay in time for the linearized equation and the bootstrap argument on the nonlinear equation, Caflisch \cite{Caflisch1,Caflisch2} studied the global existence and large-time behavior of the solutions in $\T^3$. In $\R^3$, the global solution and large-time behavior were solved through the semi-group theory, which was established by Ukai-Asano \cite{UA}. When $-3<\ga<0$, Guo \cite{Guo03} constructed the global classical solutions and Guo-Strain \cite{SG1,SG} proved the large-time behavior. 

Among the works in perturbation framework mentioned above, the initial data should have small oscillations near the global Maxwellian. In the large amplitude situation, Duan-Huang-Wang-Yang \cite{DHWY} developed an $L^\infty_x L^1_v \cap L^\infty_{x,v}$ approach to obtain the global existence and uniqueness of mild solutions in $\R^3$ or $\T^3$ for $-3<\ga\leq1$ in the condition that both $\CE(F_0)$ and the $L^1_x L^\infty_v$ norm of $(F_0-\mu)/\sqrt{\mu}$ are small enough, while the $L^\infty_{x,v}$ norm of $\langle v\rangle^\beta(F_0-\mu)/\sqrt{\mu}$ is only required to be bounded for suitably large $\be$. The smallness in $L^\infty_{x,v}$ is replaced by the smallness in $L^1_xL^\infty_v$ so that the initial data is allowed to have large amplitude around the global Maxwellian with respect to space variable. Motivated by \cite{DHWY} and \cite{Guo}, Nishimura \cite{Nishimura} obtained the global existence for hard potentials in $L^p_vL^\infty_TL^\infty_x$ for large $p$ in order to reduce $L^\infty_v$ to $L^p_v$ with finite $p$. However, the well-posedness theory in such spaces for soft potentials seems still left open.

Now we prepare to state the main results of this paper. Since we need to consider the solutions around the global Maxwillian, we define the perturbation function 
$$
f(t,x,v)=\frac{F(t,x,v)-\mu(v)}{\sqrt{\mu(v)}}.
$$ 
Substituting it into \eqref{BE}, we obtain a Cauchy problem for $f(t,x,v)$ of the form
\begin{eqnarray}\label{PBE}
&\dis \pa_tf+v\cdot \na_x f+\nu(v)f-Kf=\Ga(f,f),   \quad &\dis f(0,x,v)=f_0(x,v),
\end{eqnarray}
where the collision frequency $\nu(v)$, the operator $K$ and the nonlinear term $\Ga$ are respectively given by
$$
\nu(v)=\int_{\R^3}\int_{\S^2}B(v-u,\theta)\mu(u)d\omega du\sim (1+|v|)^\ga,
$$
$$
(Kf)(v)=\int_{\R^3}\int_{\S^2}B(v-u,\theta)\sqrt{\mu(u)}\left( \sqrt{\mu(u')}f(v')+\sqrt{\mu(v')}f(u')-\sqrt{\mu(v)}f(u) \right)d\omega du.
$$
$$
\Ga(f,f)=\Ga_+(f,f)-\Ga_-(f,f), \quad \Ga_\pm(f,f)=\frac{1}{\sqrt{\mu}}Q_\pm({\sqrt{\mu}f},{\sqrt{\mu}f}),
$$
with
$$
Q_+(f,g)=\int_{\R^3}\int_{\S^2}B(v-u,\theta)f(v')g(u')d\omega du,\quad
Q_-(f,g)=\int_{\R^3}\int_{\S^2}B(v-u,\theta)f(v)g(u)d\omega du.
$$
The velocity weight function is denoted by $w_\beta(v)=(1+|v|^2)^{\frac{\be}{2}}\sim (1+|v|)^\be$. Since our results and proofs do not rely on the derivatives of the weighted function, both forms of $w_\beta(v)$ are equivalent. Then from \eqref{PBE}, by integrating along the backward trajectory, we obtain the mild form
\begin{align}\label{mild}
\dis f(t,x,v)=&e^{-\nu(v)t}f_0(x-vt,v)+\int_0^t e^{-\nu(v)(t-s)}(Kf)(s,x-v(t-s),v)ds \notag\\
&+\int_0^t e^{-\nu(v)(t-s)}\Ga(f,f)(s,x-v(t-s),v)ds.
\end{align}
Given two funtions $f=f(t,x,v)$ and $f_0=f_0(x,v)$, for any $0\leq T_0\leq T$, the $L^p_vL^{\infty}_{T_0,T}L^{\infty}_{x}$ norm, $L^p_vL^{\infty}_x$ norm and $L_x^1L^\infty_v$ norm are respectively defined by
\begin{align*}
&\|f\|_{L^p_vL^{\infty}_{T_0,T}L^{\infty}_{x}}:=\left\{ \int_{\R^3} \left[\sup_{t\in [T_0,T]}  \sup_{x \in \Omega}|f(t,x,v)|  \right]^p dv\right\}^{\frac{1}{p}},\\
&\|f_0\|_{L^p_vL^{\infty}_x}:=\left\{ \int_{\R^3}\sup_{x \in \Omega}|f_0(x,v)|^p dv\right\}^{\frac{1}{p}},\\
&\|f_0\|_{L_x^1L^\infty_v}:= \int_{\Omega} \left(\sup_{v \in \R^3}|f_0(x,v)| \right) dx .
\end{align*}
If $T_0=0$, we write $\|f\|_{L^p_vL^{\infty}_{T}L^{\infty}_{x}}$ instead of $\|f\|_{L^p_vL^{\infty}_{0,T}L^{\infty}_{x}}$. In this paper, we consider solutions in $L^p_vL^{\infty}_{T}L^{\infty}_{x}$. In the following sections, we will prove the local existence for bounded $L^p_vL^{\infty}_x$ initial data and establish the $L^p_vL^{\infty}_{T}L^{\infty}_{x}\cap L^{\infty}_{T}L^{\infty}_{x}L^1_v$ estimates in order to extend the obtained local solution to a global solution for small $L_x^1L^\infty_v$ initial data with small $\CE(F_0)$.

Throughout the paper, if a constant $C$ depends on some parameters $\be_1,\be_2\cdots$, then we denote it by $C(\be_1,\be_2,\cdots)$ to emphasize the explicit dependence. The main two results of the paper are stated below.

\begin{theorem}[Local existence]\label{local}
Assume \eqref{ass.ck} with $-3<\gamma<0$. Let $p> \max \{6/(5+\ga),\, 4/(3-\ga),\, 3/(3+\ga),\, (2-\ga)/2\}$ and $\be > \max\{3/p',\, 36,\, 6-2\ga\}$, where $\frac{1}{p}+\frac{1}{p'}=1$. Assume $F_0(x,v):=\mu+\sqrt{\mu}f_0\geq 0$ with $\|w_\be f_0\|_{L^p_vL^{\infty}_x}<\infty$. Then there exists a  constant $C_1=C_1(\be,\ga)>0$ and a positive time 
\begin{align}\label{T_1}
T_1:=\frac{1}{6C_1(1+\|w_\be f_0\|_{L^p_vL^{\infty}_x})}>0,
\end{align}
such that the Cauchy problem on the Boltzmann equation \eqref{BE} admits a unique mild solution $F(t,x,v)=\mu+\sqrt{\mu} f(t,x,v)\geq 0$, $(t,x,v)\in[0,T_1]\times \Omega \times \R^3$, in the sense of \eqref{mild}, satisfying
\begin{align}\label{LE}
\left\|w_\be f 
\right\|_{{L^p_vL^{\infty}_{T_1}L^{\infty}_{x}}}\leq 2\left\|w_\be f_0 
\right\|_{L^p_vL^{\infty}_x}.
\end{align}
\end{theorem}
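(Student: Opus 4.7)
The plan is to solve the mild form \eqref{mild} by a Picard iteration in the weighted space $L^p_vL^\infty_{T_1}L^\infty_x$. Set $f^{0}\equiv 0$ and, for $n\geq 0$, define
\begin{align*}
f^{n+1}(t,x,v) &= e^{-\nu(v)t}f_0(x-vt,v) +\int_0^t e^{-\nu(v)(t-s)}(Kf^n)(s,x-v(t-s),v)\,ds\\
&\quad +\int_0^t e^{-\nu(v)(t-s)}\Gamma(f^n,f^n)(s,x-v(t-s),v)\,ds.
\end{align*}
With $T_1$ as in \eqref{T_1}, the aim is to show that the iteration map is a self-map of the closed ball of radius $2\|w_\be f_0\|_{L^p_vL^\infty_x}$ in the weighted $L^p_vL^\infty_{T_1}L^\infty_x$ norm and that it is a contraction there, so that the mild solution satisfying \eqref{LE} is obtained as the fixed point.

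The core of the argument is to establish three weighted estimates in $L^p_vL^\infty_TL^\infty_x$. First, the free-streaming bound $\|w_\be e^{-\nu(v)t}f_0(x-vt,v)\|_{L^p_vL^\infty_TL^\infty_x}\leq \|w_\be f_0\|_{L^p_vL^\infty_x}$, which is immediate since $w_\be$ and $\nu$ depend only on $v$ and $x\mapsto x-vt$ is a bijection of $\Omega$. Second, the linear bound $\|w_\be\int_0^t e^{-\nu(v)(t-s)}Kf\,ds\|_{L^p_vL^\infty_TL^\infty_x}\leq C(\be,\ga)\,T\,\|w_\be f\|_{L^p_vL^\infty_TL^\infty_x}$, to be proven from Grad's classical pointwise bound on the kernel $k(v,u)$ of $K$ together with a Schur-type argument in velocity adapted to $L^p_v$ integration. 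Third, the bilinear bound $\|w_\be\int_0^t e^{-\nu(v)(t-s)}\Gamma(f,g)\,ds\|_{L^p_vL^\infty_TL^\infty_x}\leq C(\be,\ga)\,T\,\|w_\be f\|_{L^p_vL^\infty_TL^\infty_x}\|w_\be g\|_{L^p_vL^\infty_TL^\infty_x}$, obtained by decomposing $\Gamma=\Gamma_+-\Gamma_-$, performing the standard change of variables in the gain term, and applying H\"older in the collision variables. Combining these three estimates with the explicit choice of $T_1$ in \eqref{T_1} yields both the self-map and the contraction properties, and uniqueness on $[0,T_1]$ falls out of the same bilinear estimate applied to the difference of two candidate solutions.

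The main technical obstacle is the bilinear estimate in the soft potential regime $-3<\ga<0$: the factor $|v-u|^\ga$ is both singular at $u=v$ and vanishing at infinity, while $\nu(v)\lesssim (1+|v|)^\ga$ provides no uniform lower bound to absorb large velocities. The four lower bounds on $p$ in the statement are precisely what is needed to dualize $|v-u|^\ga$ against an $L^{p'}_u$ function, both near the singularity and at infinity, after H\"older in the angular variable $\omega$ and the standard change of variables in the gain term. The weight condition $\be>\max\{3/p',\,36,\,6-2\ga\}$ then supplies the polynomial decay required to transfer $w_\be(v)$ onto $w_\be(v')$ and $w_\be(u')$ via $|v|^2\leq 2(|v'|^2+|u'|^2)$ and to close the $L^{p'}_u$ integral.

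Finally, to guarantee positivity $F=\mu+\sqrt{\mu}f\geq 0$, I would complement the iteration above with the equivalent monotone scheme on $F$ itself,
\begin{equation*}
\pa_t F^{n+1} + v\cdot\na_x F^{n+1} + R(F^n)F^{n+1} = Q_+(F^n,F^n), \qquad F^{n+1}(0,x,v)=F_0(x,v),
\end{equation*}
where $R(G)(v):=\int_{\R^3}\int_{\S^2}B(v-u,\theta)G(u)\,d\omega\,du$ is the loss frequency. Each step is a linear transport equation with nonnegative source and absorption, so Duhamel's formula preserves positivity inductively from $F_0\geq 0$. The corresponding perturbations $f^n=(F^n-\mu)/\sqrt{\mu}$ obey the same three weighted estimates, so both schemes produce the same limit, and the resulting $F$ is nonnegative on $[0,T_1]\times\Omega\times\R^3$.
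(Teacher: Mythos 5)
Your plan is correct, and its analytical core --- the weighted $L^p_vL^\infty_TL^\infty_x$ bounds on the free streaming term, on $K$ via the kernel $k(v,\eta)$ with a H\"older/Schur splitting against the weight ratio, and on the gain term via $w_\be(v)\le C\left(w_\be(u')+w_\be(v')\right)$, H\"older with exponent $p'$, the change of variables $z_\para=(u-v)\cdot\omega$, $z_\perp=z-z_\para$, and $du\,dv=du'\,dv'$ --- is exactly what the paper carries out, with the four lower bounds on $p$ and the condition $\be>3/p'$ entering precisely where you say they do. Where you diverge is the choice of iteration scheme: the paper does not run a plain Picard iteration on the mild form \eqref{mild}, but uses only your ``supplementary'' monotone scheme, keeping $\nu f^{n+1}+\Ga_-(f^{n},f^{n+1})$ on the left so that the Duhamel exponential becomes $e^{-\int_s^t g^n\,d\tau}$ with $g^n=\int\int B\,F^n(u)\,d\omega\,du\ge 0$. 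This makes $F^n\ge0$ automatic by induction, and since $0\le e^{-\int_s^t g^n\,d\tau}\le1$ the uniform bound requires only the $K$ and $\Ga_+$ estimates, with no bilinear bound on $\Ga_-$ at all; the price is that the contraction step must additionally control the difference of exponentials via $|e^{-a}-e^{-b}|\le|a-b|$, which reduces to the same H\"older estimate under $p>3/(3+\ga)$. Your two-scheme variant also works --- the bilinear bound on $\Ga_-$ is in fact the easy one, needing only $\ga p'>-3$, and the identification of the two limits follows from the uniqueness you establish --- but it duplicates effort; adopting the paper's single scheme buys positivity for free at the cost of the extra exponential-difference terms in the contraction estimate.
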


\begin{theorem}[Global existence]\label{global}
Let all the assumptions in Theorem \ref{local} be satisfied. There is a constant $C_2=C_2(\ga,\be)>0$ such that for any constant $M\geq 1$ that can be arbitrarily large, there exists a constant $\eps=\eps(\ga,\be,M)>0$ such that if it holds that $\|w_\be f_0\|_{L^p_vL^{\infty}_x}\leq M$ and
$$
\max\{\CE(F_0),\,\|f_0\|_{L^1_xL^\infty_v}\}\leq \eps,
$$
then the Cauchy problem on the Boltzmann equation \eqref{BE} admits a unique global mild solution
$F(t,x,v)=\mu+\sqrt{\mu} f(t,x,v)\geq 0$, $(t,x,v)\in[0,\infty)\times \Omega \times \R^3$, in the sense of \eqref{mild}, satisfying
\begin{align}\label{GE}
\left\|w_\be f
\right\|_{{L^p_vL^{\infty}_{T}L^{\infty}_{x}}}\leq C_2M^2,
\end{align}
for any $T\geq 0$.
\end{theorem}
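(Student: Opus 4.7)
The plan is a continuation argument: the local solution from Theorem \ref{local} is extended globally by proving a uniform a priori bound on $\|w_\be f\|_{L^p_vL^\infty_T L^\infty_x}$ that does not degenerate as $T$ grows, even though the local existence time $T_1$ depends on $M$. I will set up a bootstrap on the quantity
\[
\CM(T):=\sup_{0\le t\le T}\|w_\be f(t)\|_{L^p_vL^\infty_x}+\sup_{0\le t\le T}\|f(t)\|_{L^\infty_x L^1_v},
\]
assuming $\CM(T)\le 2C_2 M^2$ plus some small number, and then show that both ingredients can be re-closed strictly below this threshold provided $\eps$ is chosen sufficiently small (depending on $M$ and on the cutoffs below). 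Theorem \ref{local} gives the starting time and guarantees openness; continuity in $T$ gives closedness; the a priori estimate gives that the open set is nonempty up to any chosen time.

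For the $L^p_vL^\infty_TL^\infty_x$ piece, I will iterate the mild form \eqref{mild} once, substituting \eqref{mild} into the $Kf$ term, to produce a double-time integral of $KKf$ plus single-time integrals of $K\Ga(f,f)$ and $\Ga(f,f)$. The soft-potential issue is that $\nu(v)\sim\langle v\rangle^\ga$ with $\ga<0$, so $e^{-\nu(v)(t-s)}$ provides almost no decay for large $v$. I handle this by a standard velocity cutoff $|v|\le N$ vs $|v|>N$: on $|v|>N$ the large velocity weight $w_\be$ with $\be>6-2\ga$ absorbs a factor of $N^{-(\be+\ga)}$ and makes the contribution small; on $|v|\le N$, $\nu(v)\ge c\langle N\rangle^{\ga}>0$ gives genuine exponential decay in time. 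A change of variables along the backward trajectory ($y=x-v(t-s)$, then an interchange with the inner velocity integration from $K$) converts the pointwise-$x$ supremum of a $v$-integral into an $L^1_v$ norm evaluated at $(s,y,\cdot)$, which is exactly where the auxiliary $L^\infty_xL^1_v$ norm enters. The output of this estimate has the schematic form
\[
\|w_\be f\|_{L^p_vL^\infty_T L^\infty_x}\le C\|w_\be f_0\|_{L^p_vL^\infty_x}+C\Big(N^{-(\be+\ga)}+o_N(1)\Big)\|w_\be f\|_{L^p_vL^\infty_T L^\infty_x}+C_N\,\|f\|_{L^\infty_TL^\infty_x L^1_v}^{1/p'}\|w_\be f\|_{L^p_vL^\infty_T L^\infty_x}+C_N\|w_\be f\|_{L^p_vL^\infty_T L^\infty_x}^2,
\]
where the restrictions on $p$ and $\be$ in Theorem \ref{local} are used to give integrability of the kernels $k(v,v')$ after weighting. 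The $\Ga_\pm$ bilinear estimates use the gain/loss split and the bound $\|w_\be\Ga(f,g)\|\lesssim \|w_\be f\|\|w_\be g\|$ with appropriate $L^p_v$--$L^\infty_v L^1_v$ Hölder combinations, which is where $p>(2-\ga)/2$ and $p>4/(3-\ga)$ will be used.

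For the auxiliary $L^\infty_TL^\infty_xL^1_v$ estimate I will follow the \cite{DHWY}-type argument. The $H$-theorem \eqref{H} together with conservation of mass and energy \eqref{M}--\eqref{E} yields, via the Csisz\'ar--Kullback inequality applied to $F$ vs.\ $\mu$ and some low-velocity truncation, an $L^1_{x,v}$ closeness of $F$ to $\mu$ controlled by $\CE(F_0)^{1/2}$. To upgrade this integrated bound to a pointwise-in-$x$ $L^1_v$ bound, I again use the mild form: for $t\ge 1$, the initial contribution $\int_{\R^3} e^{-\nu t}|f_0(x-vt,v)|\,dv$ is bounded by $t^{-3}\|f_0\|_{L^1_xL^\infty_v}$ via the substitution $y=x-vt$; for $t\le 1$ a H\"older estimate against the weight $w_\be^{-1}$ controls it by $\|w_\be f_0\|_{L^p_vL^\infty_x}$ times a small constant; and the $K$- and $\Ga$-contributions are estimated using the $L^p_vL^\infty_T L^\infty_x$ bootstrap hypothesis combined with the integrated $L^1_{x,v}$ closeness. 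Choosing first $N$ large enough to kill the $N^{-(\be+\ga)}$ factor, then $\eps$ small enough (depending on $N$ and $M$) so that the auxiliary $L^1_v$ norm is much smaller than $(C_N M^2)^{-p'}$, closes the bootstrap at level $C_2M^2$.

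The main obstacle will be the uniform-in-$T$ control of $\|f\|_{L^\infty_TL^\infty_xL^1_v}$ for soft potentials: unlike the hard-potential case of \cite{Nishimura}, there is no positive lower bound on $\nu(v)$, so one cannot use a straightforward Gronwall argument on a weighted $L^1_v$ norm. This is precisely why the entropy smallness $\CE(F_0)\le\eps$ is indispensable; it provides a time-independent input in the Csisz\'ar--Kullback step, decoupled from the collision frequency, and it is what allows the bound to persist for all $t\ge 0$. Uniqueness and nonnegativity follow from the same iteration applied to the difference of two solutions and from the standard representation preserving $F\ge 0$ along the backward trajectory, respectively.
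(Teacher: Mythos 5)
Your overall architecture matches the paper's: a bootstrap coupling $L^p_vL^\infty_TL^\infty_x$ with $L^\infty_TL^\infty_xL^1_v$, one iteration of the mild form on the $K$ term, a case analysis in a velocity cutoff $N$ and a time gap $\la$, and entropy control of the compact part via the change of variables $y=x-v(t-s)$. However, there are three concrete gaps. First, your schematic a priori bound ends with a term $C_N\|w_\be f\|^2_{L^p_vL^\infty_TL^\infty_x}$ carrying no small prefactor. Under the bootstrap hypothesis $\|w_\be f\|\le 2C_2M^2$ this term is of size $C_NM^4$ and can never be absorbed into $C_2M^2$ for large $M$, no matter how small $\eps$ is. The paper avoids this by estimating $\Ga_\pm$ in the interpolated form $\|w_{\be-\ga}\Ga_\pm(f,f)\|\lesssim\|w_\be f\|^{2-a}\,\|f\|^{a}_{L^\infty_{T_1,T}L^\infty_xL^1_v}$ with $a>0$ (Lemma \ref{Gama}), so that every superlinear term in $\|w_\be f\|$ carries a positive power of the small auxiliary norm; the only unavoidable quadratic contribution comes from the initial layer $[0,T_1]$ and is bounded by $\|w_\be f_0\|^2\le M^2$ using the local estimate \eqref{LE} --- which is precisely why the final bound is $C_2M^2$ rather than $C_2M$. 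Second, and relatedly, your bootstrap quantity takes $\sup_{0\le t\le T}\|f(t)\|_{L^\infty_xL^1_v}$ over the whole interval. Near $t=0$ this norm is only bounded by $C\|w_\be f_0\|_{L^p_vL^\infty_x}\le CM$ and cannot be made small by shrinking $\eps$; the smallness statement (the paper's Lemma \ref{L1v}) holds only on $[T_1,T]$, and the two time regimes must be kept separate throughout.

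Third, your mechanism for handling large velocities does not work for soft potentials. You propose that on $|v|>N$ the weight $w_\be$ ``absorbs a factor $N^{-(\be+\ga)}$,'' but the quantity being estimated is already $\|w_\be f\|$, so the weight is spent defining the norm and yields no extra smallness. The actual difficulty is that the time integral produces $1/\nu(v)\sim(1+|v|)^{-\ga}$, which grows, while the untruncated kernel $k(v,\eta)$ only supplies $(1+|v|)^{-1}$ decay from \eqref{Prok}; for $\ga$ close to $-3$ the product still grows in $|v|$. The paper's remedy is the splitting $K=K^m+K^c$ with the cutoff $\chi_m$ near the collision singularity: $K^m$ is small of order $m^{\ga+3/p'}$ (Lemma \ref{ProKm}), and the truncated kernel $l(v,\eta)$ of $K^c$ gains the decay $\nu(v)(1+|v|)^{-2}$ of \eqref{Prol1}, which exactly compensates $1/\nu(v)$ and produces the $N^{-(2/p'+\ga/p)}$ smallness in the region $|v|\ge N$. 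Without this splitting (or an equivalent device), the large-velocity case of your $KKf$ estimate does not close. The remaining ingredients of your sketch --- the entropy input via a Csisz\'ar--Kullback-type bound (the paper's Lemma \ref{Taylor}), the $t^{-3}\|f_0\|_{L^1_xL^\infty_v}$ dispersion estimate for the free-streaming part, and the final choice of parameters $m,\la,N,\eps$ --- are consistent with the paper's proof.
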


The proof of Theorem \ref{local} is based on the fixed point theorem. We first construct an approximation sequence using the perturbed equation. Then we prove that it is a Cauchy sequence in ${L^p_vL^{\infty}_{T}L^{\infty}_{x}}$ provided $p$ is large enough and $T$ is small enough. The difficulty is due to the nonlinear term $\Ga(f^n,f^n)$. We need to prove the norm of $\int^t_0 \left[w_\be \Ga(f^n,f^n)\right](s,x_1,v)ds$ is bounded by $CT\left\|w_\be f^n\right\|^2_{{L^p_vL^{\infty}_{T}L^{\infty}_{x}}}$. When $p=\infty$ as in \cite{DHWY}, we can directly obtain $\left\|w_\be f\right\|^2_{L^{\infty}}$ from $\Ga(f^n,f^n)$ and the rest of the integral can be bounded by $CT$. However, when we consider $L^p_v$ instead of $L^{\infty}_v$ for some $p\in \R$, it is not straightforward to obtain  $\left\|w_\be f\right\|^2_{{L^p_vL^{\infty}_{T}L^{\infty}_{x}}}$ from the point-wise estimate of the nonlinear term. Moreover, the gain term contains $u'$ and $v'$ as variables and the whole integral is taken with respect to $v$. In this paper, we use the transformation $z_{\shortparallel}=(u-v)\cdot \omega$, $z_{\perp}=z-z_{\para}$ as well as multiple integral inequalities to get the ${L^p_vL^{\infty}_{T}L^{\infty}_{x}}$ norm of $w_\be f$ from the nonlinear term.
At last we can obtain the estimates
\begin{align*}
\dis \left\|w_\be f^{n+1}\right\|_{L^p_vL^{\infty}_{T}L^{\infty}_{x}}\leq 2\|w_\be f_0\|_{L^p_vL^{\infty}_x}
\end{align*}
and
\begin{align*}
\dis \left\|w_\be f^{n+2}-w_\be f^{n+1}\right\|_{L^p_vL^{\infty}_{T}L^{\infty}_{x}}\leq \frac{1}{2}\left\|w_\be f^{n+1}-w_\be f^{n}\right\|_{L^p_vL^{\infty}_{T}L^{\infty}_{x}}.
\end{align*}
Then the approximation sequence is a Cauchy sequence. After taking the limit, we yield a unique local solution which is bounded by the initial data. 

Next we sketch the proof of Theorem \ref{global}. To establish the global $L^\infty$  bound, in the previous works such as \cite{GuoY,Guo,Kim,Strain,UY}, the following inequality is applied to estimate $\Ga(f,f)$
\begin{align}\label{PreIneq}
\left| \left[w_\be \Ga(f,f)\right](t,x,v) \right|\leq C\nu(v)\left\| w_\be f(t) \right\|^2_{L^\infty}.
\end{align}
We can infer from the above inequality that the $L^\infty$ smallness of the initial data is necessary. In order to deal with large initial data, as in \cite{DHWY}, we can improve the inequality \eqref{PreIneq} to be
\begin{align}\label{NexIneq}
\left| \left[w_\be \Ga(f,f)\right](t,x,v) \right|\leq C\nu(v)\left\| w_\be f(t) \right\|^\tau_{L^\infty}\left(\int_{\R^3}|f(t,x,v)|dv\right)^{2-\tau},
\end{align}
for some $0\leq \tau \leq 1$. Then due to the hyperbolicity of the Boltzmann equation, one can prove that if $\CE(F_0)$ and $\|f_0\|_{L^1_x L^{\infty}_v}$ are small enough, $\int_{\R^3}|f(t,x,v)|dv$  will be small uniformly in $x$ for $t\geq T_1$, where $T_1$ is a positive number. Then we can obtain the estimate in $L^\infty$ without assuming the initial data to be small. For hard potentials in $L^p_v$ spaces, a similar idea as \eqref{NexIneq} is established in \cite{Nishimura}, which can be applied to yield global solutions. 

For soft potentials, it is difficult to have a good decay property for the operator $K$ after taking integration in $v$. We will introduce a cut-off function as in \cite{SG} to avoid this inconvenience. Also, the point-wise inequality $e^{-\frac{|u|^2}{4}}|v-u|^\ga\leq C(1+|v|)^\ga$ in \cite{Nishimura} does not hold anymore. We need to use various integral inequalities and transformations to control the nonlinear term. Moreover,  there are terms like $\int_0^t e^{-\nu(v)(t-s)}(w_\be \Ga)(f,f)(s,x-v(t-s),v)ds$ which will cause troubles for our analysis,  since it is hard to get $\|w_\be f\|_{L^p_v}$ from those terms if we take the $L^p_v$ norm. Then we point out that the order for taking $L^p_v$ norm and $L^\infty_T$ norm will matter. If we take $L^\infty_T$ first, we can escape from the difficulty stated above. In this way, we establish the inequality
\begin{align*}
\left\|w_{\be-\ga}\Ga_\pm(f,f) \right\|_{L^p_vL^{\infty}_{T_0,T}L^{\infty}_{x}}\leq& C \left\|f\right\|^{a_\pm}_{L^{\infty}_{T_0,T}L^{\infty}_{x}L^1_v} \left\| w_\be f\right\|^{2-a_\pm}_{L^p_vL^{\infty}_{T_0,T}L^{\infty}_{x}},
\end{align*}
for some $0\leq a_\pm \leq1$. Then we will show that $\left\|f\right\|_{L^{\infty}_{T_0,T}L^{\infty}_{x}L^1_v}$ is small under the smallness condition of $\CE(F_0)$ and $\|f_0\|_{L^1_x L^{\infty}_v}$. Finally, \eqref{GE} follows since we can close our $a$ $priori$ assumption.

As for the organization of the paper, in Section 2, we will give some useful properties of the operator $K$ and introduce some notations. In Section 3, we prove theorem \ref{local} which is the local solution result. In Section 4, we deduce the ${L^p_vL^{\infty}_{T}L^{\infty}_{x}}\cap L^{\infty}_{T}L^{\infty}_{x}L^1_v$ estimate and use it to prove Theorem \ref{global}.

\section{Preliminaries}
We will need the following properties of the operator $K$. Details of the proof can be found in \cite{BPT,Glassey}.

\begin{lemma}\label{K}
For $-3< \ga < 0$,  $\left(Kf\right)(v)$ can be written as
\begin{align*}
(Kf)(v)=\int_{\R^3}k(v,\eta)f(\eta)d\eta,
\end{align*}
with 
\begin{align*}
\dis |k(v,\eta)|\leq C|v-\eta|^\ga e^{-\frac{|v|^2}{4}}e^{-\frac{|\eta|^2}{4}}+\frac{C(\ga)}{|v-\eta|^\frac{3-\ga}{2}}e^{-\frac{|v-\eta|^2}{8}}e^{-\frac{\left| |v|^2-|\eta|^2\right|^2}{8|v-\eta|^2}},
\end{align*}
where $C(\ga)$ is a constant depending only on $\ga$. For $\be\in \R$, we have the estimate
\begin{align}\label{Prok}
\int_{\R^3}\left| k(v,\eta)\cdot \frac{w_\be(v)}{w_\be(\eta)} \right|d\eta \leq C(\ga) (1+|v|)^{-1}.
\end{align}
The above inequality still holds after replacing $k(v,\eta)$ by $k(\eta,v)$ since $k(v,\eta)=k(\eta,v)$. 
\end{lemma}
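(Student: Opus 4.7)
The plan is to derive the integral representation from the definition of $K$, establish the pointwise bound on $k(v,\eta)$ via a Carleman-type change of variables in the two gain terms, and then obtain the weighted estimate \eqref{Prok} by a polar decomposition adapted to the direction of $v$.

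First I would split $Kf$ into its three contributions. The loss piece $\int B\sqrt{\mu(u)\mu(v)}f(u)\,d\omega du$ is immediately of the desired form after renaming $u=\eta$ and integrating in $\omega$, and one sees that it is bounded pointwise by $C|v-\eta|^\ga e^{-|v|^2/4}e^{-|\eta|^2/4}$, which is the first term in the stated bound on $k(v,\eta)$. For the two gain terms, where the unknown is evaluated at $v'$ and $u'$, I would apply the Carleman representation: substituting $\eta=v'$ (respectively $\eta=u'$) and parametrizing the remaining degree of freedom by a unit vector in the plane orthogonal to $v-\eta$, the Jacobian of the transformation produces the singular factor $|v-\eta|^{-(3-\ga)/2}$ after accounting for the restriction to a codimension-one surface. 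Using the conservation identity $|u'|^2+|v'|^2=|u|^2+|v|^2$ from \eqref{velocity}, the remaining Maxwellian factors combine into an exponential whose exponent depends only on $|v-\eta|$ and $|v|^2-|\eta|^2$, giving the second term of the claimed bound.

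For \eqref{Prok}, I would first absorb the weight ratio via $w_\be(v)/w_\be(\eta)\leq C(1+|v-\eta|)^{|\be|}$, reducing the task to bounding $\int |k(v,\eta)|(1+|v-\eta|)^{|\be|}\,d\eta$. The contribution of the first kernel term is negligible thanks to the full Maxwellian $e^{-|v|^2/4}$, which beats any polynomial in $|v|$. For the singular second term I would pass to the polar-type variables $\eta=v+r\si$, $r>0$, $\si\in\S^2$. Using $|\eta|^2-|v|^2=2r\,v\cdot\si+r^2$, the exponent becomes $-r^2/8-(2v\cdot\si+r)^2/8$, and the integration over $\si$ reduces to $2\pi\int_{-1}^{1}e^{-(2|v|t+r)^2/8}\,dt$ where $t=\cos\ta$ and $\ta$ is the angle between $\si$ and $v$. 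Changing variable $s=2|v|t+r$ produces the gain of $1/|v|$ for large $|v|$, and the remaining $r$-integral of $r^2\cdot r^{-(3-\ga)/2}e^{-r^2/8}(1+r)^{|\be|}$ converges precisely because $\ga>-3$. The symmetry $k(v,\eta)=k(\eta,v)$ is manifest in the explicit bound, so swapping arguments is immediate.

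The main obstacle is the singular factor $|v-\eta|^{-(3-\ga)/2}$, which is strictly worse than the $-3/2$ behaviour encountered in the hard-potential case; obtaining the sharp $(1+|v|)^{-1}$ gain requires exploiting both Gaussian exponentials in the second part of the kernel bound simultaneously, since discarding the factor $e^{-||v|^2-|\eta|^2|^2/(8|v-\eta|^2)}$ already loses the crucial $1/|v|$ decay needed for the later nonlinear analysis.
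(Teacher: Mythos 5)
Your proposal is correct and follows essentially the same route as the paper, which offers no proof of Lemma \ref{K} but defers to \cite{BPT,Glassey}: the loss term is read off directly, the gain terms are handled by Grad's reduction to an integral over the hyperplane orthogonal to $v-\eta$ (yielding the $|v-\eta|^{-(3-\ga)/2}$ singularity and the two Gaussians via $|u|^2+|u'|^2=|\eta+z_\perp|^2+|v+z_\perp|^2$), and \eqref{Prok} follows from Peetre's inequality together with the angular integration of $e^{-(2|v|t+r)^2/8}$, which is exactly where the $(1+|v|)^{-1}$ gain comes from. The only cosmetic imprecisions are that the factor $|v-\eta|^{-(3-\ga)/2}$ comes not from the Jacobian alone (which gives $|v-\eta|^{-2}$) but from combining it with $b(\theta)\leq C|z_\para|/|z|$ and splitting $|z|^{\ga-1}$ between the parallel and transverse components, and that the identity $k(v,\eta)=k(\eta,v)$ is a property of the explicit kernel rather than of the symmetric bound, though the symmetric bound is all that is needed for the swapped estimate.
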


In order to yield the global existence, it is necessary to get more decay in $|v|$ from $K$. We introduce a smooth cut-off function $\chi_m=\chi_m(\tau)$ as in \cite{SG} with $0\leq m\leq 1$, $0\leq \chi_m \leq 1$. Let $\chi_m(\tau)=1$ for $\tau \leq m$ and $\chi_m(\tau)=0$ for $\tau \geq 2m$. Then $K$ can be split into $K=K^m+K^c$ where

\begin{align}\label{Km}
(K^mf)(v)=&\int_{\R^3}\int_{\S^2}B(v-u,\theta)\chi_m(|v-u|)\sqrt{\mu(u)}\notag\\
&\left( \sqrt{\mu(u')}f(v')+\sqrt{\mu(v')}f(u') 
-\sqrt{\mu(v)}f(u) \right)d\omega du.
\end{align}
For $K^c=K-K^m$, we have the following lemma, which provides the decay we need. The proof is given in the appendix of \cite{DHWY}.

\begin{lemma}\label{Kc}
Let $-3< \ga < 0$ and $\be \in \R$. There is a function $l(v,\eta)$ such that
\begin{align}\label{RepKc}
(K^cf)(v)=\int_{\R^3}l(v,\eta)f(\eta)d\eta
\end{align}
with
\begin{align}
&\int_{\R^3}\left| l(v,\eta)\cdot \frac{w_\be(v)}{w_\be(\eta)} \right|d\eta \leq C(\ga) m^{\ga-1} \frac{\nu(v)}{(1+|v|)^2},\label{Prol1}\\
&\int_{\R^3}\left| l(v,\eta)\cdot \frac{w_\be(v)}{w_\be(\eta)} \right|e^{-\frac{|\eta|^2}{20}}d\eta \leq Ce^{-\frac{|v|^2}{100}}, \notag\\
&\int_{\R^3}\left| l(v,\eta)\cdot \frac{w_\be(v)}{w_\be(\eta)} \right|e^{\frac{|v-\eta|^2}{20}}d\eta \leq C(\ga) m^{\ga-1} \frac{\nu(v)}{(1+|v|)^2}\label{Prol4}.
\end{align}
Furthermore, $l(v,\eta)$ also has the same properties as $k(v,\eta)$ that
\begin{align}\label{Prol2}
\int_{\R^3}\left| l(v,\eta)\cdot \frac{w_\be(v)}{w_\be(\eta)} \right|d\eta \leq C(\ga) (1+|v|)^{-1},
\end{align}
and
\begin{align}\label{Estionl}
\dis |l(v,\eta)|\leq C|v-\eta|^\ga e^{-\frac{|v|^2}{4}}e^{-\frac{|\eta|^2}{4}}+\frac{C(\ga)}{|v-\eta|^\frac{3-\ga}{2}}e^{-\frac{|v-\eta|^2}{8}}e^{-\frac{\left| |v|^2-|\eta|^2\right|^2}{8|v-\eta|^2}}.
\end{align}
All the inequalities hold after changing $l(v,\eta)$ to $l(\eta,v)$.
\end{lemma}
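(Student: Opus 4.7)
The statement asks for three ingredients: the kernel representation \eqref{RepKc}, the inherited bounds \eqref{Prol2} and \eqref{Estionl}, and the three refined integral bounds \eqref{Prol1}, the middle inequality, and \eqref{Prol4}, all of which exploit the cut-off $\chi_m$. My plan is to treat them in that order, with the refined bound \eqref{Prol1} being the main work.

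First, since $K^c=K-K^m$, inserting the factor $1-\chi_m(|v-u|)$ in the integrand defining $K$ gives the cut-off version. The loss contribution $-\sqrt{\mu(v)}f(u)$ yields a kernel in $u$ directly after the $\omega$-integration. For the two gain contributions, apply the Carleman--Grad change of variables: parameterize $(u,\omega)$ by $\eta:=v'$ (respectively $\eta:=u'$) together with the transverse component $z_\perp$ of $u-v$ in the plane orthogonal to $\omega=(v-\eta)/|v-\eta|$; on this plane one has $u=\eta+z_\perp$, $u'=v+z_\perp$, and $|v-u|^2=|v-\eta|^2+|z_\perp|^2$. Integrating out $z_\perp$ produces $l(v,\eta)$, yielding \eqref{RepKc}. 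Because $0\leq 1-\chi_m\leq 1$ pointwise, the resulting kernel satisfies $|l(v,\eta)|\leq |k(v,\eta)|$, so \eqref{Estionl} and \eqref{Prol2} follow immediately from the analogous statements on $k$ in Lemma \ref{K}.

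For the refined bound \eqref{Prol1}, split $l$ according to \eqref{Estionl}. The contribution from $|v-\eta|^\gamma e^{-|v|^2/4}e^{-|\eta|^2/4}$ is already exponentially small in $|v|$ after integration against $w_\beta(v)/w_\beta(\eta)$, hence much smaller than the desired $(1+|v|)^{\gamma-2}$. For the singular Carleman piece I would exploit the cut-off $|v-u|\geq m$ through the trade $|v-u|^\gamma=|v-u|^{\gamma-1}|v-u|\leq m^{\gamma-1}|v-u|$, valid since $\gamma-1<0$. This converts the singular factor into a linear one, which on the Carleman plane reads $(|v-\eta|^2+|z_\perp|^2)^{1/2}$. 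Combining with the Gaussian weights from $\sqrt{\mu(u)}\sqrt{\mu(u')}=(2\pi)^{-3/2}e^{-(|\eta+z_\perp|^2+|v+z_\perp|^2)/4}$ and completing the square produces the factor $e^{-|v-\eta|^2/8}e^{-c(|v|^2-|\eta|^2)^2/|v-\eta|^2}$ from Lemma \ref{K}, together with a Gaussian in $z_\perp$ centred at $-(v+\eta)/2$. The second exponential forces $|\eta|$ to lie close to $|v|$ (unless $|v-\eta|$ is large, in which case the first exponential takes over), and after the final $\eta$-integration one extracts the advertised $(1+|v|)^{\gamma-2}$ gain on top of the $m^{\gamma-1}$ loss.

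The two exponential variants follow by essentially the same computation with minor adjustments. For the bound with $e^{-|\eta|^2/20}$, the product of this factor with $e^{-c(|v|^2-|\eta|^2)^2/|v-\eta|^2}$ and $e^{-|v-\eta|^2/8}$ forces $|\eta|$ to be comparable with $|v|$, whence one deduces $e^{-|v|^2/100}$; for \eqref{Prol4}, $e^{|v-\eta|^2/20}$ is absorbed into $e^{-|v-\eta|^2/8}$, leaving $e^{-3|v-\eta|^2/40}$, and the argument for \eqref{Prol1} goes through unchanged. Symmetry of the collision cross-section $B$ under $(u,v)\leftrightarrow(u',v')$ gives the same bounds with $l(\eta,v)$ in place of $l(v,\eta)$. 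The main obstacle is extracting the extra $(1+|v|)^{-2}$ decay in \eqref{Prol1}: a pointwise bound on $l$ alone does not suffice, and one must exploit both the planar geometry of the Carleman representation and the quadratic-exponential factor $e^{-c(|v|^2-|\eta|^2)^2/|v-\eta|^2}$ already visible in Lemma \ref{K}.
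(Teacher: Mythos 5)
The paper does not prove this lemma itself: it is quoted from the appendix of \cite{DHWY} (which in turn follows Strain--Guo \cite{SG}), so your attempt can only be judged against that standard argument. Your representation of $l$ via the Carleman--Grad change of variables, the pointwise comparison $|l|\leq|k|$ giving \eqref{Estionl} and \eqref{Prol2}, the treatment of the middle inequality, and the symmetry remark are all sound and consistent with that argument.

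The gap is in the central step, the derivation of \eqref{Prol1} (and hence \eqref{Prol4}). The trade $|v-u|^{\gamma}\leq m^{\gamma-1}|v-u|$ replaces the soft kernel by $m^{\gamma-1}$ times a hard-sphere kernel: since $|v-u|\,|\cos\theta|=|v-\eta|$, after the Grad reduction the plane integral degenerates to a pure Gaussian $\int_{\Pi}e^{-|z_\perp+\eta_\perp|^2/2}\,dz_\perp=C$, and one is left with $l(v,\eta)\lesssim m^{\gamma-1}|v-\eta|^{-1}e^{-|v-\eta|^2/8}e^{-||v|^2-|\eta|^2|^2/(8|v-\eta|^2)}$. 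Integrating this in $\eta$ (polar coordinates $\eta=v+r\sigma$; the second exponential confines $\sigma$ to a band of angular measure $\sim(1+|v|)^{-1}$) gives exactly $Cm^{\gamma-1}(1+|v|)^{-1}$ and nothing better --- this is Grad's classical hard-sphere bound, i.e.\ \eqref{Prol2} with a factor $m^{\gamma-1}$. It does not give $m^{\gamma-1}\nu(v)(1+|v|)^{-2}=m^{\gamma-1}(1+|v|)^{\gamma-2}$, which for $-3<\gamma<0$ is strictly stronger. Your closing diagnosis also misattributes the source of the extra decay: the factor $e^{-c||v|^2-|\eta|^2|^2/|v-\eta|^2}$ only supplies the single power $(1+|v|)^{-1}$ already present in \eqref{Prol2}. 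The missing $(1+|v|)^{\gamma-1}$ must come from the factor $|v-u|^{\gamma-1}=(|v-\eta|^2+|z_\perp|^2)^{(\gamma-1)/2}$ \emph{inside} the plane integral, which your trade discards. The correct bookkeeping keeps it: on the bulk of the $z_\perp$-Gaussian one has $|z_\perp|\gtrsim|\eta_\perp|$, hence $(|v-\eta|^2+|z_\perp|^2)^{(\gamma-1)/2}\lesssim(1+|\eta_\perp|)^{\gamma-1}$ with $|\eta_\perp|\gtrsim|v|$ up to corrections killed by the other exponentials (this is where the refined kernel bound $\sim(1+|v|+|\eta|)^{\gamma-1}$ of Strain--Guo comes from); the cut-off's $m^{\gamma-1}$ is needed only on the complementary region $|z_\perp|\leq|\eta_\perp|/2$, where the Gaussian centred at $-\eta_\perp$ is itself of size $e^{-c|\eta_\perp|^2}$.

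This is not a cosmetic loss. The later estimates, e.g.\ \eqref{estiJ20} and \eqref{J22v}, rest on the boundedness of $\nu(v)^{-1}\bigl(\nu(v)(1+|v|)^{-2}\bigr)^{1/p'}=(1+|v|)^{(2-\gamma)/p-2}$ under the hypothesis $p>(2-\gamma)/2$; with only $(1+|v|)^{-1}$ in place of $\nu(v)(1+|v|)^{-2}$ one would need $(1+|v|)^{-\gamma-1+1/p}$ bounded, which fails for all $p$ once $\gamma\leq-1$. So the full strength of \eqref{Prol1} is genuinely used, and your proposed route cannot reach it without reinstating the plane-integral decay described above.
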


Moreover, we need the following smallness property for $K^m$ when $0<m\ll 1$.
\begin{lemma}\label{ProKm}
For $-3< \ga<0$, $p > 3/(3+\ga)$ and $\frac{1}{p}+\frac{1}{p'}=1$, we have the following pointwise bound of $K^m$,
\begin{align}
|(K^mf)(v)|\leq Cm^{\ga+\frac{3}{p'}}e^{-\frac{|v|^2}{10}}& \left[\left(  \int_{\R^3}\int_{\S^2}e^{-\frac{|u'|^2}{4}}|f(v')|^p d\omega du  \right)^{\frac{1}{p}}+\left(  \int_{\R^3}\int_{\S^2}e^{-\frac{|v'|^2}{4}}|f(u')|^p d\omega du  \right)^{\frac{1}{p}}\right.\notag\\
&\ +\left. \left(  \int_{\R^3}\int_{\S^2}e^{-\frac{|v|^2}{4}}|f(u)|^p d\omega du  \right)^{\frac{1}{p}}\right], \label{BKm}
\end{align}
where $u'$, $v'$ are given in \eqref{velocity}. The three terms on the right-hand side of \eqref{BKm} are obtained from the corresponding three terms on the right-hand side of \eqref{Km}.
\end{lemma}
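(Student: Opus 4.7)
The plan is to prove the pointwise bound \eqref{BKm} by treating the three summands of $(K^mf)(v)$ in \eqref{Km} separately and applying H\"older's inequality in the joint variable $(u,\omega)\in\R^3\times\S^2$ with conjugate exponents $p$ and $p'$. For each piece I would pair $|f|$ with a specific Gaussian weight so that raising the first factor to the $p$th power reproduces the corresponding integrand on the right-hand side of \eqref{BKm}, while the second factor, once raised to the $p'$th power and integrated, is handled explicitly via the support condition of $\chi_m$.

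I would first treat the loss-type piece $\iint B(v-u,\theta)\chi_m(|v-u|)\sqrt{\mu(u)\mu(v)}f(u)\,d\omega du$, splitting the integrand as $(e^{-|v|^2/(4p)}|f(u)|)\cdot(|v-u|^\ga b(\theta)\chi_m\sqrt{\mu(u)\mu(v)}\,e^{|v|^2/(4p)})$. The first factor raised to the $p$th power is exactly $e^{-|v|^2/4}|f(u)|^p$, matching the third term on the right-hand side of \eqref{BKm}. A short computation using $1/p=1-1/p'$ yields $\sqrt{\mu(v)}^{p'}e^{p'|v|^2/(4p)}=Ce^{-|v|^2/4}$, so the $p'$th power of the second factor equals $C|v-u|^{\ga p'}b(\theta)^{p'}\chi_m^{p'}e^{-p'|u|^2/4}e^{-|v|^2/4}$. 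The angular integral of $b(\theta)^{p'}$ is bounded since $b\leq C|\cos\theta|$. For the $u$-integral I would substitute $w=u-v$ and combine the support condition $|w|\leq 2m$ with the Young-type inequality $|v+w|^2\geq|v|^2/2-|w|^2$, obtaining
\begin{align*}
\int_{|w|\leq 2m}|w|^{\ga p'}e^{-p'|v+w|^2/4}\,dw\leq e^{-p'|v|^2/8}e^{p'm^2}\int_{|w|\leq 2m}|w|^{\ga p'}\,dw=Ce^{-p'|v|^2/8}m^{\ga p'+3},
\end{align*}
where the radial integral converges precisely because $\ga p'+3>0$, which is equivalent to the hypothesis $p>3/(3+\ga)$. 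Taking the $1/p'$-power and combining with the factor $e^{-|v|^2/(4p')}$ extracted from the $e^{-|v|^2/4}$ outside the integral yields a prefactor $\leq Cm^{\ga+3/p'}e^{-|v|^2/8}\leq Cm^{\ga+3/p'}e^{-|v|^2/10}$, as required.

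For the two gain-type pieces I would repeat the H\"older split with $|u'|$ and $|v'|$ in place of $|v|$ inside the Gaussian weight paired with $|f|$, i.e.\ pairing $e^{-|u'|^2/(4p)}|f(v')|$ and $e^{-|v'|^2/(4p)}|f(u')|$ respectively against the rest. The leftover factor $e^{-|u'|^2/(4p')}$ or $e^{-|v'|^2/(4p')}$ on the dual side is bounded harmlessly by $1$, and the prefactor $Cm^{\ga+3/p'}e^{-|v|^2/10}$ is produced exclusively from $\sqrt{\mu(u)}^{p'}=Ce^{-p'|u|^2/4}$ via the same computation as above. The main delicate point, and the only place care is needed, is to verify that the decay rate $|v|^2/10$ in the prefactor is uniform in $p'$: the raw estimate yields $e^{-p'|v|^2/8}$, which becomes $e^{-|v|^2/8}$ after $(\cdot)^{1/p'}$, while the parasitic factor $e^{p'm^2}$ from the Young inequality becomes $e^{m^2}\leq e$ for $m\leq 1$, so no $p'$-dependence leaks into the prefactor. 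Summing the three pieces and using $e^{-|v|^2/8}\leq e^{-|v|^2/10}$ then yields \eqref{BKm}.
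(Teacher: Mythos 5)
Your proposal is correct and follows essentially the same route as the paper: split $K^m$ into its three pieces, apply H\"older in $(u,\omega)$ with exponents $(p,p')$ pairing $|f|$ with the Gaussian weight that reproduces the right-hand side, and use the support of $\chi_m$ together with $\ga p'>-3$ (equivalent to $p>3/(3+\ga)$) to produce $m^{\ga+3/p'}$. The only cosmetic difference is that the paper extracts the $v$-decay up front via the pointwise bound $e^{-|u|^2/4}\leq Ce^{-|v|^2/10}$ on $\{|v-u|\leq 2m\}$, whereas you keep $e^{-p'|u|^2/4}$ inside the dual factor and recover the same decay with a Young-type inequality afterwards.
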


\begin{proof} From the definition of $K_m$ \eqref{Km}, it is direct to see that
\begin{align*}
\dis \left|(K^mf)(v)\right|\leq&\int_{\R^3}\int_{\S^2}B(v-u,\theta)\chi_m(|v-u|)\sqrt{\mu(u)}\notag\\
&\left(  \left|\sqrt{\mu(u')}f(v')\right|+ \left|\sqrt{\mu(v')}f(u') \right|
+ \left|\sqrt{\mu(v)}f(u) \right|\right)d\omega du.
\end{align*}
We prove for the first term on the right-hand side above which contains $ \sqrt{\mu(u')}f(v')$. Noticing the fact that $e^{-\frac{|u|^2}{4}}\leq C e^{-\frac{|v|^2}{10}}$ for $|v-u|\leq 2m$, it holds that
\begin{align}
&\int_{\R^3}\int_{\S^2}B(v-u,\theta)\chi_m(|v-u|)\sqrt{\mu(u)\mu(u')} f(v')d\omega du \notag\\
&\leq Ce^{-\frac{|v|^2}{10}}\left( \int_{\R^3}\int_{\S^2}|v-u|^{\ga p'}e^{-\frac{|u'|^2}{4}}\chi_m(|v-u|)d\omega du \right)^{\frac{1}{p'}}\left( \int_{\R^3}\int_{\S^2}e^{-\frac{|u'|^2}{4}}|f(v')|^pd\omega du \right)^{\frac{1}{p}}.\label{lemma1}
\end{align}
We have $\ga p'>-3$ by our assumption that $p > 3/(3+\ga)$, which yields that
\begin{align*}
&\left( \int_{\R^3}\int_{\S^2}|v-u|^{\ga p'}e^{-\frac{|u'|^2}{4}}\chi_m(|v-u|)d\omega du \right)^{\frac{1}{p'}}\notag\\
&\leq C\left( \int_{\R^3}|v-u|^{\ga p'}\chi_m(|v-u|)du \right)^{\frac{1}{p'}}\notag\\
&\leq C\left( \int_{\R^3}|u|^{\ga p'}\chi_m(|u|)du \right)^{\frac{1}{p'}}\notag\\
&\leq Cm^{\ga+\frac{3}{p'}}.
\end{align*}
Then together with \eqref{lemma1}, it follows that
\begin{align*}
&\int_{\R^3}\int_{\S^2}B(v-u,\theta)\chi_m(|v-u|)\sqrt{\mu(u)\mu(u')} f(v')d\omega du \notag\\
&\leq Cm^{\ga+\frac{3}{p'}}e^{-\frac{|v|^2}{10}}\left( \int_{\R^3}\int_{\S^2}e^{-\frac{|u'|^2}{4}}|f(v')|^pd\omega du \right)^{\frac{1}{p}}.
\end{align*}
The second and third terms in the right-hand side of \eqref{BKm} can be estimated similarly.
\end{proof}
The following lemma will be used frequently in Section 4. For the proof, see \cite[Lemma 2.7]{DHWY} and  \cite{Guo}.

\begin{lemma}\label{Taylor}
Let $F(t,x,v)$ satisfy \eqref{M}, \eqref{E} and \eqref{H}, we have
\begin{align*}
&\int_{\Omega}\int_{\R^3}\frac{|F(t,x,v)-\mu(v)|^2}{\mu(v)}\chi_{\{ |F(t,x,v)-\mu(v)|\leq \mu(v) \}} + |F(t,x,v)-\mu(v)|\chi_{\{ |F(t,x,v)-\mu(v)|\geq \mu(v) \}}dv dx\notag\\
&\leq 4\left(\iint \left\{ F_0\log{F_0}-\mu\log{\mu}\right\}dvdx+(\frac{3}{2}\log (2\pi)-1)M_0+\frac{1}{2}E_0\right):=4\CE(F_0)\ 
\end{align*}
\end{lemma}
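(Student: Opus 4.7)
The plan is to reduce the lemma to a pointwise convexity inequality, integrate it, and then identify the resulting integral with $\CE(F_0)$ by exploiting the conservation laws \eqref{M}--\eqref{E} together with the entropy inequality \eqref{H}.

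\textbf{Step 1: rewriting $\CE(F(t))$.} Using the algebraic identity
$$F\log F - \mu\log\mu \;=\; (F-\mu)(1+\log\mu) \;+\; \bigl[F\log(F/\mu)-(F-\mu)\bigr],$$
together with $\log\mu(v) = -\tfrac{3}{2}\log(2\pi) - \tfrac{|v|^2}{2}$, the conservation of mass $M_0$ and energy $E_0$ would give
$$\iint (F(t,x,v)-\mu(v))(1+\log\mu(v))\,dvdx \;=\; \bigl(1-\tfrac{3}{2}\log(2\pi)\bigr)M_0 - \tfrac{1}{2}E_0.$$
Plugging this into the definition of $\CE(F(t))$ yields, for every $t \geq 0$,
$$\CE(F(t)) \;=\; \iint \bigl[F\log(F/\mu) - (F-\mu)\bigr]\,dvdx.$$
Combining with \eqref{H}, the same computation at $t=0$ then delivers the monotonicity $\CE(F(t)) \leq \CE(F_0)$.

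\textbf{Step 2: the pointwise bound.} The core of the proof is the scalar inequality
$$\frac{(y-\mu)^2}{\mu}\chi_{\{|y-\mu|\leq\mu\}} + |y-\mu|\chi_{\{|y-\mu|\geq\mu\}} \;\leq\; 4\bigl[y\log(y/\mu)-(y-\mu)\bigr]$$
for all $y \geq 0$ and $\mu > 0$. Setting $h := (y-\mu)/\mu \in [-1,\infty)$ and $\Phi(h) := (1+h)\log(1+h) - h$, this is equivalent to $\Phi(h) \geq h^2/4$ when $|h| \leq 1$ and $\Phi(h) \geq |h|/4$ when $h \geq 1$ or $h = -1$. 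The first inequality follows from $\Phi(0)=\Phi'(0)=0$ and $\Phi''(h) = 1/(1+h) \geq 1/2$ on $[-1,1]$, by integrating twice. For the second, the convexity of $\Phi$ together with $\Phi'(h) = \log(1+h) \geq \log 2 > 1/4$ for $h \geq 1$ and the base case $4\Phi(1) = 8\log 2 - 4 > 1$ handle the regime $h\geq 1$, while at $h=-1$ one simply checks $4\Phi(-1) = 4 \geq 1$ (using $0\log 0 = 0$).

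\textbf{Step 3: conclusion.} Applying the pointwise bound with $y = F(t,x,v)$, integrating over $\Omega \times \R^3$, and using Step~1 give
$$\iint \frac{(F-\mu)^2}{\mu}\chi_{\{|F-\mu|\leq\mu\}} + |F-\mu|\chi_{\{|F-\mu|\geq\mu\}}\,dvdx \;\leq\; 4\CE(F(t)) \;\leq\; 4\CE(F_0),$$
which is exactly the claim.

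The only delicate point is the pointwise inequality in Step~2: one must track the constant $4$ in both regimes and handle the boundary case $h=-1$, since we cannot assume $F$ is bounded away from zero. Once this scalar fact is in hand, everything else is a direct rearrangement of the relative entropy together with the given conservation/entropy structure, so no further estimates on the Boltzmann equation itself are needed.
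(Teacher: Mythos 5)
Your proof is correct. The paper itself does not prove this lemma but defers to \cite{DHWY} (Lemma 2.7) and \cite{Guo}, whose argument is exactly your reduction of $\CE(F(t))$ to the relative entropy $\iint[F\log(F/\mu)-(F-\mu)]$ via the conservation laws and \eqref{H}, followed by the same pointwise convexity bound on the two regions $\{|F-\mu|\leq\mu\}$ and $\{|F-\mu|\geq\mu\}$ (there written as a second-order Taylor expansion with Lagrange remainder $\frac{(F-\mu)^2}{2\tilde F}$ and the bounds $\tilde F\leq 2\mu$ resp. $\tilde F\leq 2(F-\mu)$, rather than via your explicit function $\Phi$), so your argument is essentially the intended one, constant $4$ included.
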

In order to simplify our calculations, we define some notations. For given funtions $f=f(t,x,v)$, $g=g(x,v)$ and funtion $l(v,\eta)$ which is defined in \eqref{RepKc},
\begin{align}
\|f(t,v) \|_{L^\infty_x}:=\sup_{x\in \Omega}|f(t,x,v)|,\quad  \|f(v) \|_{L^\infty_{T_0,T}L^\infty_{x}}:=&\sup_{t\in [T_0,T]}\sup_{x\in \Omega}|f(t,x,v)| ,\notag\\
\|f\|_{L^\infty_{T_0,T}L^\infty_{x} L^1_v}:=\sup_{t\in [T_0,T]}\sup_{x\in \Omega} \left(\int_{\R^3} |f(t,x,v)|dv \right), & \quad 
 \|g(v) \|_{L^\infty_{x}}:=\sup_{x\in \Omega}|g(x,v)|,\notag\\
\left\|f(t,x)\right\|_{L^1_v}:=\int_{\R^3} |f(t,x,v)|dv,\quad l_{w_\al}(v,\eta)&:=l(v,\eta)\frac{w_\al(v)}{w_\al(\eta)}.\label{notations}
\end{align}
When $T_0=0$, $\|f(v) \|_{L^\infty_{T}L^\infty_{x}}:=\|f(v) \|_{L^\infty_{0,T}L^\infty_{x}}$ and $\|f\|_{L^\infty_{T}L^\infty_{x} L^1_v}:=\|f\|_{L^\infty_{0,T}L^\infty_{x} L^1_v}$.
\section{Local-in-time Existence}
In this section we consider the local existence of \eqref{BE} with bounded $L^p_vL^{\infty}_{x}$ initial data. Firstly, rewrite the perturbed equation \eqref{PBE} as
\begin{eqnarray}\label{RPBE}
&\dis \pa_tf+v\cdot \na_x f+\nu(v)f-\Ga_-(f,f)=Kf+\Ga_+(f,f).
\end{eqnarray}
Recall that 
\begin{align}\label{ReGama-}
\Ga_-(f,f)(t,x,v)=\frac{1}{\sqrt{\mu}}Q_-({\sqrt{\mu}f},{\sqrt{\mu}f})(t,x,v)=\int_{\R^3}\int_{\S^2}B(v-u,\theta)\left(\sqrt{\mu}f \right)(t,x,u)f(t,x,v)d\omega du.
\end{align}
Notice that from \eqref{ReGama-} and the fact that $\nu(v)=\int_{\R^3}\int_{\S^2}B(v-u,\theta)\mu(u)d\omega du$, we have
\begin{align*}
\left[\nu f+\Ga_-(f,f)\right](t,x,v)=f(t,x,v)\int_{\R^3}\int_{\S^2}B(v-u,\theta)\left[ \mu(u)+\left(\sqrt{\mu}f \right)(t,x,u) \right]d\omega du.
\end{align*}
After integrating along the backward trajectory, we can construct our approximation sequence ${\{f_n\}_{n=1}^\infty}$ from \eqref{RPBE} as following,
\begin{align}
\dis f^{n+1}(t,x,v)=&e^{-{\int^t_0g^n(\tau,x-v(t-\tau),v)d\tau}}f_0(x-vt,v) \notag\\
&+\int_0^t e^{-{\int^t_sg^n(\tau,x-v(t-\tau),v)d\tau}}(Kf^n)(s,x-v(t-s),v)ds \notag\\
&+\int_0^t e^{-{\int^t_sg^n(\tau,x-v(t-\tau),v)d\tau}}\Ga_+(f^n,f^n)(s,x-v(t-s),v)ds, \label{appro}
\end{align}
where $g^n(\tau,y,v)=\int_{\R^3}\int_{\S^2}B(v-u,\theta)\left[ \mu(u)+\left(\sqrt{\mu}f^n \right)(\tau,y,u) \right]d\omega du$, $f^{n+1}(0,x,v)=f_0(x,v)$ and $f^0(t,x,v)=0$. If we define $F^n=\mu+\sqrt{\mu}f^n$, we can write down the corresponding equation for $F^n$ that
\begin{align*}
\dis F^{n+1}(t,x,v)=&e^{-{\int^t_0g^n(\tau,x-v(t-\tau),v)d\tau}}F_0(x-vt,v) \notag\\
&+\int_0^t e^{-{\int^t_sg^n(\tau,x-v(t-\tau),v)d\tau}}Q_+(F^n,F^n)(s,x-v(t-s),v)ds,
\end{align*}
with $F^{n+1}(0,x,v)=F_0(x,v)$ and $F^0(t,x,v)=\mu(v)\geq0$. If we assume that $F^n\geq 0$, then $g^n(\tau,y,v)\geq0$ and $Q_+(F^n,F^n)(s,x-v(t-s),v)\geq0$, which yields $F^{n+1}\geq 0$. By induction on $n$, we have $F^n\geq 0$ for $n=1,2,\cdots$. Then it holds that $g^n(\tau,y,v)=\int_{\R^3}\int_{\S^2}B(v-u,\theta)F^n(\tau,y,u) d\omega du\geq0$.

Once we have the approximation sequence, we can prove that it is uniformly bounded and also a Cauchy sequence. Then after taking the limit, we will obtain a local solution. The uniqueness can be deduced similarly as how we prove the approximation sequence is Cauchy sequence.

 For $(t,x,v)\in[0,T]\times \Omega \times \R^3$, the following inequality holds directly from \eqref{appro},
\begin{align}
\dis \left|w_\beta(v) f^{n+1}(t,x,v)\right|\leq &\left|w_\beta(v) f_0(x-vt,v)\right|+\int_0^t \left|w_\beta(v)(Kf^n)(s,x-v(t-s),v)\right|ds\notag \\
&+\int_0^t \left|w_\beta(v)\Ga_+(f^n,f^n)(s,x-v(t-s),v)\right|ds\notag \\
&=\left|w_\beta(v) f_0(x-vt,v)\right|+I_1(t,x,v)+I_2(t,x,v). \label{LocalPoint}
\end{align}
Obviously the $L^p_vL^{\infty}_{T}L^{\infty}_{x}$ bound of $\left|w_\beta(v) f_0(x-vt,v)\right|$ is $ \| w_\beta f_0 \|_{L^p_v L^\infty_x}$, we only need to care about $I_1$ and $I_2$. Since $(Kf)(v)=\int_{\R^3}k(v,\eta)f(\eta)d\eta$ by Lemma \ref{K}, we have
\begin{align}\label{firstK}
\dis I_1(t,x,v)&=\int_0^t \left|w_\beta(v)(Kf^n)(s,x-v(t-s),v)\right|ds\notag \\
&=\int_0^t 
\left|\int_{\R^3}k(v,\eta)w_\beta(v)f^n(s,x-v(t-s),\eta)d\eta
\right|ds\notag \\
&=\int_0^t 
\left|\int_{\R^3}k(v,\eta)\frac{w_\beta(v)}{w_\beta(\eta)}w_\beta(\eta)f^n(s,x-v(t-s),\eta)d\eta
\right|ds\notag \\
&\leq \int_0^t \int_{\R^3}\left|k(v,\eta)\frac{w_\beta(v)}{w_\beta(\eta)}\right|\left\|\left(w_\beta f^n\right)(s,\eta)\right\|_{L^\infty_x}d\eta ds.
\end{align}
By H\"older's inequality,
\begin{align}\label{BoundI1}
I_1(t,x,v)&\leq \int_0^t \left(\int_{\R^3}\left|k(v,\eta)\right| \left|\frac{w_\beta(v)}{w_\beta(\eta)}\right|^{p'}d\eta\right)^{\frac{1}{p'}}    \left(\int_{\R^3}\left|k(v,\eta)\right|\left\|\left(w_\be f^n\right)(s,\eta)\right\|^p_{L^\infty_x}d\eta\right)^{\frac{1}{p}}ds.
\end{align}
 Recalling from \eqref{Prok} that $\int_{\R^3}\left|k(v,\eta)\right| \left|\frac{w_\beta(v)}{w_\beta(\eta)}\right|^{p'}d\eta $ is bounded, we have
\begin{align}
\dis I_1(t,x,v)&\leq C\int_0^t     \left(\int_{\R^3}\left|k(v,\eta)\right|\left\|\left(w_\be f^n\right)(s,\eta)\right\|^p_{L^\infty_x}d\eta\right)^{\frac{1}{p}}ds\notag \\
&\leq CT \left(\int_{\R^3}\left|k(v,\eta)\right|\left\|\left(w_\be f^n\right)(\eta)\right\|^p_{L^\infty_{T}L^\infty_{x}}d\eta\right)^{\frac{1}{p}}.\label{I_1}
\end{align}
After taking $L^p_vL^{\infty}_{T}L^{\infty}_{x}$ norm, it follows from \eqref{I_1} that
\begin{align}\label{I1}
\|I_1 \|_{L^p_vL^{\infty}_{T}L^{\infty}_{x}}&\leq CT \left( \int_{\R^3}\int_{\R^3}\left|k(v,\eta)\right|dv\left\|\left(w_\be f^n\right)(\eta)\right\|^p_{L^\infty_{T}L^\infty_{x}}d\eta\right)^{\frac{1}{p}}\notag \\
&\leq CT  \left( \int_{\R^3}\left\|\left(w_\be f^n\right)(\eta)\right\|^p_{L^\infty_{T}L^\infty_{x}}d\eta\right)^{\frac{1}{p}}\notag \\
&\leq CT \left\|w_\be f^n\right\|_{L^p_vL^{\infty}_{T}L^{\infty}_{x}}.
\end{align}
Next we turn to $I_2(t,x,v)$. Denote $x_1=x-v(t-s)$, we obtain that
\begin{align}\label{EI_2}
\dis I_2(t,x,v)&=\int_0^t \left|w_\beta(v)\Ga_+(f^n,f^n)(s,x-v(t-s),v)\right|ds\notag \\
&= \int_0^t \left| \int_{\R^3} \int_{\S^2} |v-u|^\gamma b(\theta)w_\beta(v)e^{-\frac{|u|^2}{4}}f^n(s,x_1,u')f^n(s,x_1,v')d\omega du \right|ds\notag \\
&\leq CT \int_{\R^3} \int_{\S^2} |v-u|^\gamma|\cos\theta|w_\beta(v)e^{-\frac{|u|^2}{4}}\|f^n(u')f^n(v') \|_{L^\infty_{T}L^\infty_{x}}d\omega du.
\end{align}
 Since $|v|^2\leq |u'|^2+|v'|^2$, either $|v|^2\leq 2|u'|^2$ or $|v|^2\leq 2|v'|^2$. Then there exists a strictly positive constant $C$ such that $w_\be(v)\leq w_\be(v)\chi_{\{ |v|^2\leq 2|u'|^2 \}}+w_\be(v)\chi_{\{ |v|^2\leq 2|v'|^2 \}}\leq C\left(w_\be(u')+w_\be(v')\right)$. By this inequality, \eqref{EI_2} and the fact that we can exchange $u'$ and $v'$ by a rotation, we have
\begin{align}
\dis I_2(t,x,v)&\leq CT \int_{\R^3} \int_{\S^2} |v-u|^\gamma|\cos\theta|\left(w_\beta(u')+w_\beta(v')\right)e^{-\frac{|u|^2}{4}}\|f^n(u')f^n(v') \|_{L^\infty_{T}L^\infty_{x}}d\omega du \notag \\
&\leq CT \int_{\R^3} \int_{\S^2} |v-u|^\gamma|\cos\theta|w_\beta(u')e^{-\frac{|u|^2}{4}}\|f^n(u') \|_{L^\infty_{T}L^\infty_{x}} \|f^n(v') \|_{L^\infty_{T}L^\infty_{x}}d\omega du \notag \\
&\leq CT \int_{\R^3} \int_{\S^2} |v-u|^\gamma|\cos\theta|\frac{e^{-\frac{|u|^2}{4}}}{(1+|v'|)^\be}\|(w_\beta f^n)(u') \|_{L^\infty_{T}L^\infty_{x}}\|(w_\beta f^n)(v') \|_{L^\infty_{T}L^\infty_{x}} d\omega du \notag \\
&\leq CT \left(\int_{\R^3} \int_{\S^2} \left| |v-u|^\gamma|\cos\theta|\frac{1}{(1+|v'|)^\be}e^{-\frac{|u|^2}{4}} \right|^{p'}d\omega du \right)^\frac{1}{p'}\notag \\
& \quad \times \left(\int_{\R^3}\|(w_\beta f^n)(u') \|^p_{L^\infty_{T}L^\infty_{x}} \|(w_\beta f^n)(v') \|^p_{L^\infty_{T}L^\infty_{x}} du\right)^\frac{1}{p}.\label{I_2}
\end{align}
We define
\begin{align*}
\dis \tilde{I_1}:=&\int_{\R^3} \int_{\S^2} \left| |v-u|^\gamma|\cos\theta|\frac{1}{(1+|v'|)^\be}e^{-\frac{|u|^2}{4}} \right|^{p'}d\omega du.
\end{align*}
Then it follows from \eqref{I_2} that \begin{align}\label{I_2S}
\dis I_2(t,x,v)\leq CT \left(\tilde{I_1}\right)^\frac{1}{p'}\left(\int_{\R^3}\|(w_\beta f^n)(u') \|^p_{L^\infty_{T}L^\infty_{x}} \|(w_\beta f^n)(v') \|^p_{L^\infty_{T}L^\infty_{x}} du\right)^\frac{1}{p}.
\end{align}
 Denote $z=u-v$, $z_{\shortparallel}=(u-v)\cdot \omega$, $z_{\perp}=z-z_{\para}$. We assume that $p>\max\{6/(5+\ga),\, 3/(3+\ga), \, 4/(3-\ga)\}$ which implies $\frac{\ga -1}{2}p' > -3$, $\frac{\ga +1}{2}p' -2> -3$ and $\frac{\ga+1}{2}p'-2<0$ respectively. Here $3/(3+\ga)$ can be replaced by $2/(3+\ga)$, but we use $3/(3+\ga)$ because of \eqref{lemma1}. Also we require $\be > 3/p'$, then it holds that
\begin{align}
\dis \tilde{I_1}&\leq \int_{\R^3}\int_{z_\perp}\left( \frac{|z_\para|}{|z|^{1-\ga}} \right)^{p'} \frac{1}{|z_\para|^2}e^{-\frac{|z+v|^2}{4}p'}\frac{1}{\left( 1+|v+z_\para| \right)^{\be p'}}dz_\perp dz_\para \notag\\
&\leq \int_{\R^3}\int_{z_\perp}|z_\perp|^{\frac{\ga-1}{2}p'}e^{-\frac{|z_\perp+y|^2}{4}p'}dz_\perp\,|y-v|^{\frac{\ga+1}{2}p'-2}\frac{1}{\left( 1+|y| \right)^{\be p'}} dy \quad \quad (y=v+z_\para).\label{trans}\end{align}
It follows from our assumption $-3<\frac{\ga -1}{2}p'<0 $ that 
$$
\int_{z_\perp}|z_\perp|^{\frac{\ga-1}{2}p'}e^{-\frac{|z_\perp+y|^2}{4}p'}dz_\perp \leq C(1+|y|)^{\frac{\ga-1}{2}p'}\leq C
$$
for some constant $C$. Thus, substituting the inequality above into \eqref{trans}, we have
\begin{align}
\dis \tilde{I_1}&\leq C \int_{\R^3}|y-v|^{\frac{\ga+1}{2}p'-2}\frac{1}{\left( 1+|y| \right)^{\be p'}}dy\notag\\
&\leq C (1+|v|)^{\frac{\ga+1}{2}p'-2}\leq C. \label{I_22}
\end{align}
The second equality above holds since $\frac{\ga +1}{2}p' -2> -3,\, \be > 3/p'$. For the last inequality in \eqref{I_22}, we use the condition $\frac{\ga+1}{2}p'-2<0$.
By \eqref{I_2S}, \eqref{I_22} and $dudv=du'dv'$, after taking $L^p_vL^{\infty}_{T}L^{\infty}_{x}$ norm, we deduce that
\begin{align}\label{I2}
\|I_2 \|_{L^p_vL^{\infty}_{T}L^{\infty}_{x}}&\leq CT \left(\int_{\R^3}\int_{\R^3}\|(w_\beta f^n)(u') \|^p_{L^\infty_{T}L^\infty_{x}} \|(w_\beta f^n)(v') \|^p_{L^\infty_{T}L^\infty_{x}} dudv\right)^\frac{1}{p}\notag\\
&= CT \left(\int_{\R^3}\int_{\R^3}\|(w_\beta f^n)(u) \|^p_{L^\infty_{T}L^\infty_{x}} \|(w_\beta f^n)(v) \|^p_{L^\infty_{T}L^\infty_{x}} dudv\right)^\frac{1}{p}\notag\\
&\leq CT \left\|w_\be f^n\right\|^2_{L^p_vL^{\infty}_{T}L^{\infty}_{x}}.
\end{align}
According to the obervation above,  we can obtain the upper bound of $w_\beta f^n$. It follows from \eqref{LocalPoint}, \eqref{I1} and \eqref{I2} that
\begin{align}\label{IniEstifn}
\dis  \left\|w_\be f^{n+1}\right\|_{L^p_vL^{\infty}_{T}L^{\infty}_{x}}& \leq\left\|w_\be f_0\right\|_{L^p_vL^{\infty}_{x}}+C_1T\left(  \left\|w_\be f^{n}\right\|_{L^p_vL^{\infty}_{T}L^{\infty}_{x}}+\left\|w_\be f^{n}\right\|^2_{L^p_vL^{\infty}_{T}L^{\infty}_{x}} \right),
\end{align}
for some constant $C_1>1$. We set 
\begin{align}\label{DefT1}
\dis T_1=\frac{1}{6C_1(1+\|w_\be f_0\|_{L^p_vL^{\infty}_x})},
\end{align}
then it holds from \eqref{IniEstifn} and \eqref{DefT1} that
\begin{align}\label{LE2}
\dis \left\|w_\be f^{n+1}\right\|_{L^p_vL^{\infty}_{T}L^{\infty}_{x}}\leq 2\|w_\be f_0\|_{L^p_vL^{\infty}_x}.
\end{align}
With this uniform upper bound, we can prove the approximation sequence is a Cauchy sequence. By taking the difference between $w_\beta f^{n+2}$ and $w_\beta f^{n+1}$ and recalling the definition of $f^n$ \eqref{appro}, it holds that
\begin{align*}
\dis &w_\beta(f^{n+2}-f^{n+1})(t,x,v)\notag\\
&=  w_\beta(v) f_0(x-vt,v)\left( e^{-{\int^t_0g^{n+1}(\tau,x-v(t-\tau),v)d\tau}}-e^{-{\int^t_0g^n(\tau,x-v(t-\tau),v)d\tau}}  \right)\notag\\
&\quad +\int_0^t w_\beta(v)\left(Kf^{n+1}\right)(s,x-v(t-s),v)\left(e^{-{\int^t_sg^{n+1}(\tau,x-v(t-\tau),v)d\tau}}-e^{-{\int^t_sg^n(\tau,x-v(t-\tau),v)d\tau}}  \right) ds\notag\\
&\quad +\int_0^t w_\beta(v)\Ga_+(f^{n+1},f^{n+1})(s,x-v(t-s),v)\notag \\
& \quad\quad \times\left(e^{-{\int^t_sg^{n+1}(\tau,x-v(t-\tau),v)d\tau}}-e^{-{\int^t_sg^n(\tau,x-v(t-\tau),v)d\tau}}\right)ds\notag \\
&\quad +\int_0^t e^{-{\int^t_0g^n(\tau,x-v(t-\tau),v)d\tau}} w_\beta(v)\left(Kf^{n+1}-Kf^{n}\right)(s,x-v(t-s),v) ds \notag\\
&\quad +\int_0^t e^{-{\int^t_0g^n(\tau,x-v(t-\tau),v)d\tau}} w_\beta(v)\left(\Ga_+(f^{n+1},f^{n+1})-\Ga_+(f^{n},f^{n})\right)(s,x-v(t-s),v) ds,
\end{align*}
for $(t,x,v)\in[0,T_1]\times\Omega\times\R^3$. Noticing $g^n\geq 0$ for $n=1,2,\cdots$ and $|e^{-a}-e^{-b}|\leq|a-b|$ for any $a,b\geq0$, we have the following inequality for $s\in[0,t]$, 
$$
\left|e^{-{\int^t_sg^{n+1}(\tau,x-v(t-\tau),v)d\tau}}-e^{-{\int^t_sg^n(\tau,x-v(t-\tau),v)d\tau}}\right|\leq \int^t_s\left| (g^{n+1}-g^n)(\tau,x-v(t-\tau),v) \right|d\tau.
$$
Obviously we also have $\left|e^{-{\int^t_sg^n(\tau,x-v(t-\tau),v)d\tau}}\right|\leq1$. Hence we obtain the pointwise bound
\begin{align}
\dis \left| w_\beta(f^{n+2}-f^{n+1})(t,x,v) \right|\leq \tilde{F_1}(t,x,v)+\tilde{F_2}(t,x,v), \label{diff}
\end{align}
where
\begin{align}\label{deftildeF1}
\dis \tilde{F}_1(t,x,v)&:=\left|w_\beta(v) f_0(x-vt,v)\right|\int^t_0\left| (g^{n+1}-g^n)(\tau,x-v(t-\tau),v) \right|d\tau \notag\\
&\ \ +\int_0^t \left|w_\beta(v)\left(Kf^{n+1}\right)(s,x-v(t-s),v)\right|\int^t_s\left| (g^{n+1}-g^n)(\tau,x-v(t-\tau),v) \right|d\tau ds\notag\\
&\ \ +\int_0^t \left|w_\beta(v)\Ga_+(f^{n+1},f^{n+1})(s,x-v(t-s),v)\right|\int^t_s\left| (g^{n+1}-g^n)(\tau,x-v(t-\tau),v) \right|d\tau ds\notag \\
&\ \ +\int_0^t \left|w_\beta(v)\left(Kf^{n+1}-Kf^{n}\right)(s,x-v(t-s),v)\right| ds,
\end{align}
and
\begin{align}\label{deftildeF2}
\dis
\tilde{F}_2(t,x,v)&:=\int_0^t \left|w_\beta(v)\left(\Ga_+(f^{n+1},f^{n+1})-\Ga_+(f^{n},f^{n})\right)(s,x-v(t-s),v)\right| ds.
\end{align}
Recall that $g^n(\tau,y,v)=\int_{\R^3}\int_{\S^2}B(v-u,\theta)\left[ \mu(u)+\left(\sqrt{\mu}f^n \right)(\tau,y,u) \right]d\omega du$. Since for $p>3/(3+\ga)$, $p'\ga>-3$, by similar arguments as in \eqref{I_2}, one gets that
\begin{align}
\dis
&\int^t_s\left| (g^{n+1}-g^n)(\tau,x-v(t-\tau),v) \right|d\tau\notag \\
& \leq\int^t_0\left| (g^{n+1}-g^n)(\tau,x-v(t-\tau),v) \right|d\tau\notag\\
&\leq CT_1 \int_{\R^3} \int_{\S^2} |v-u|^\gamma|\cos\theta|e^{-\frac{|u|^2}{4}}\|f^{n+1}(u)-f^n(u) \|_{L^\infty_{T_1}L^\infty_{x}}d\omega du \notag \\
&\leq CT_1 \left(\int_{\R^3} \int_{\S^2} \left| |v-u|^\gamma|\cos\theta|e^{-\frac{|u|^2}{4}} \right|^{p'}d\omega du \right)^\frac{1}{p'}  \left(\int_{\R^3}\left\|f^{n+1}(u)-f^n(u)\right\|^p_{L^\infty_{T_1}L^\infty_{x}} du\right)^\frac{1}{p}  \notag \\
&\leq CT_1 \left(\int_{\R^3}\left\|f^{n+1}(u)-f^n(u)\right\|^p_{L^\infty_{T_1}L^\infty_{x}} du\right)^\frac{1}{p}\notag \\
&= CT_1 \left\|f^{n+1}-f^n\right\|_{L^p_vL^\infty_{T_1}L^\infty_{x}}.\label{123}
\end{align}
Also for the last term on the right-hand side of \eqref{deftildeF1}, using similar arguments as in \eqref{firstK}, \eqref{BoundI1} and \eqref{I_1}, we have
\begin{align}
\dis
&\int_0^t \left|w_\beta(v)\left(Kf^{n+1}-Kf^{n}\right)(s,x-v(t-s),v)\right| ds\notag\\
&=\int_0^t \left| \int_{\R^3}k(v,\eta)\frac{w_\beta(v)}{w_\beta(\eta)}\left(w_\be f^{n+1}-w_\be f^n\right)(s,x-v(t-s),\eta)d\eta\right| ds \notag\\
&\leq\int_0^t \left(\int_{\R^3}\left|k(v,\eta)\right| \left|\frac{w_\beta(v)}{w_\beta(\eta)}\right|^{p'}d\eta\right)^{\frac{1}{p'}}    \left(\int_{\R^3}\left|k(v,\eta)\right|\left\|\left(w_\be f^{n+1}-w_\be f^n\right)(s,\eta)\right\|^p_{L^\infty_x}d\eta\right)^{\frac{1}{p}}ds\notag \\
&\leq CT_1 \left(\int_{\R^3}\left|k(v,\eta)\right|\left\|\left(w_\be f^{n+1}-w_\be f^n\right)(\eta)\right\|^p_{L^\infty_{T_1}L^\infty_{x}}d\eta\right)^{\frac{1}{p}}.\label{4}
\end{align}
It follows from \eqref{deftildeF1}, \eqref{123} and \eqref{4} that
\begin{align}\label{first4}
\dis \tilde{F}_1(t,x,v)&\leq CT_1\left\|w_\be f^{n+1}-w_\be f^n\right\|_{L^p_vL^\infty_{T_1}L^\infty_{x}}\notag \\
&\quad\quad\times\left(
\left|w_\beta(v) f_0(x-vt,v)\right|+\int_0^t \left|w_\beta(v)(Kf^{n+1})(s,x-v(t-s),v)\right|ds\right.\notag \\
&\quad\quad\quad\quad\left.+\int_0^t \left|w_\beta(v)\Ga_+(f^{n+1},f^{n+1})(s,x-v(t-s),v)\right|ds
\right).
\end{align}
After taking $L^p_vL^\infty_{T_1}L^\infty_{x}$ norm, by \eqref{LE2}, \eqref{first4} and similar arguments as how we estimate the right-hand side of \eqref{LocalPoint}, we can bound $L^p_vL^\infty_{T_1}L^\infty_{x}$ norm of $\tilde{F_1}(t,x,v)$ as follows:
\begin{align}\label{tildeF}
\dis \|\tilde{F}_1\|_{L^p_vL^\infty_{T_1}L^\infty_{x}}\leq C T_1\left( 1+\left\|w_\be f_0\right\|_{L^p_vL^{\infty}_{x}}\right)\left\|w_\be f^{n+1}-w_\be f^{n}\right\|_{L^p_vL^\infty_{T_1}L^\infty_{x}}.
\end{align}
Next we need to estimate $\tilde{F}_2(t,x,v)$. It is direct to see
\begin{align}
\dis
\tilde{F}_2(t,x,v)&\leq \int_0^t \left|w_\beta(v)\Ga_+(f^{n+1}-f^{n},f^{n})(s,x-v(t-s),v)\right| ds\notag\\
&\quad + \int_0^t \left|w_\beta(v)\Ga_+(f^{n+1},f^{n+1}-f^{n})(s,x-v(t-s),v)\right| ds \label{Ga++}.
\end{align}
We firstly focus on the integral containing $\Ga_+(f^{n+1},f^{n+1}-f^{n})$. By using the similar arguments in \eqref{EI_2} and \eqref{I_2}, we have
\begin{align*}
\dis
&\left|w_\beta(v)\Ga_+(f^{n+1},f^{n+1}-f^{n})(s,x-v(t-s),v)\right| \notag \\
&\leq C\int_{\R^3} \int_{\S^2} |v-u|^\gamma|\cos\theta|w_\beta(v)e^{-\frac{|u|^2}{4}}\|f^{n+1}(u')\left(f^{n+1}-f^{n}\right)(v') \|_{L^\infty_{T_1}L^\infty_{x}}d\omega du  \notag \\
&\leq C\int_{\R^3} \int_{\S^2} |v-u|^\gamma|\cos\theta|e^{-\frac{|u|^2}{4}}\|w_\beta f^{n+1}(u')\left(f^{n+1}-f^{n}\right)(v') \|_{L^\infty_{T_1}L^\infty_{x}}d\omega du  \notag \\
&\quad + C\int_{\R^3} \int_{\S^2} |v-u|^\gamma|\cos\theta|e^{-\frac{|u|^2}{4}}\|f^{n+1}(u')\left(w_\beta f^{n+1}-w_\beta f^{n}\right)(v') \|_{L^\infty_{T_1}L^\infty_{x}}d\omega du  \notag \\
&\leq C\int_{\R^3} \int_{\S^2} |v-u|^\gamma|\cos\theta|e^{-\frac{|u|^2}{4}}\frac{1}{w_\beta(v')} \|w_\beta f^{n+1}(u')\left(w_\beta f^{n+1}-w_\beta f^{n}\right)(v') \|_{L^\infty_{T_1}L^\infty_{x}}d\omega du\notag \\
&\quad + C\int_{\R^3} \int_{\S^2} |v-u|^\gamma|\cos\theta|e^{-\frac{|u|^2}{4}}\frac{1}{w_\beta(u')}\|w_\beta f^{n+1}(u')\left(w_\beta f^{n+1}-w_\beta f^{n}\right)(v') \|_{L^\infty_{T_1}L^\infty_{x}}d\omega du \notag \\
&\leq C\left(\int_{\R^3}\|(w_\be f^n)(u') \|^p_{L^\infty_{T_1}L^\infty_{x}} \|\left(w_\beta f^{n+1}-w_\beta f^{n}\right)(v')\|^p_{L^\infty_{T_1}L^\infty_{x}} du\right)^\frac{1}{p}.
\end{align*}
We can treat $\left|w_\beta(v)\Ga_+(f^{n+1}-f^{n},f^{n})(s,x-v(t-s),v)\right|$ in the same way, then we conclude that 
\begin{align}\label{Ga+++}
\dis
&\int_0^t \left|w_\beta(v)\left(\Ga_+(f^{n+1},f^{n+1})-\Ga_+(f^{n},f^{n})\right)(s,x-v(t-s),v)\right| ds\notag\\
&\leq  CT_1\left(\int_{\R^3}\|(w_\be f^n)(u') \|^p_{L^\infty_{T_1}L^\infty_{x}} \|\left(w_\beta f^{n+1}-w_\beta f^{n}\right)(v') \|^p_{L^\infty_{T_1}L^\infty_{x}} du\right)^\frac{1}{p}.
\end{align}
It follows from \eqref{Ga++} and \eqref{Ga+++} that
\begin{align}\label{tildeF2}
\dis \|\tilde{F}_2\|_{L^p_vL^\infty_{T_1}L^\infty_{x}}\leq C T_1\left\|w_\be f_0\right\|_{L^p_vL^{\infty}_{x}}\,\left\|w_\be f^{n+1}-w_\be f^{n}\right\|_{L^p_vL^\infty_{T_1}L^\infty_{x}},
\end{align}
where $\tilde{F}_2$ is defined in \eqref{deftildeF2}.
Using \eqref{diff}, \eqref{tildeF}, \eqref{tildeF2} and recalling that $T_1=\frac{1}{6C_1(1+\|w_\be f_0\|_{L^p_vL^{\infty}_x})}$ from \eqref{DefT1}, we yield
\begin{align*}
\dis \left\|w_\be f^{n+2}-w_\be f^{n+1}\right\|_{L^p_vL^{\infty}_{T_1}L^{\infty}_{x}}&\leq \|\tilde{F}_1\|_{L^p_vL^\infty_{T_1}L^\infty_{x}}+\|\tilde{F}_2\|_{L^p_vL^\infty_{T_1}L^\infty_{x}}\notag\\
&\leq CT_1\left( 1+\left\|w_\be f_0\right\|_{L^p_vL^{\infty}_{x}}\right)\left\|w_\be f^{n+1}-w_\be f^{n}\right\|_{L^p_vL^{\infty}_{T_1}L^{\infty}_{x}} \notag \\
&\leq \frac{C}{6C_1}\left\|w_\be f^{n+1}-w_\be f^{n}\right\|_{L^p_vL^{\infty}_{T_1}L^{\infty}_{x}}\notag \\
&\leq \frac{1}{2}\left\|w_\be f^{n+1}-w_\be f^{n}\right\|_{L^p_vL^{\infty}_{T_1}L^{\infty}_{x}},
\end{align*}
by choosing $C_1$ large enough such that $\frac{C}{6C_1}\leq \frac{1}{2}$. Then we have proved that the approximation sequence is a Cauchy sequence. After taking the limit, we can see the limit function is a local-in-time solution of \eqref{BE} and satiesfies the conservation laws and entropy inequality. \eqref{LE} follows from letting $n$ tend to infinity in \eqref{LE2}. The uniqueness can be obtained in the same way as how we estimate \eqref{diff}. Up to now, we finish the proof of the local existence.

\section{Global-in-time Existence}
In order to obtain the global existence, we rewrite the mild form \eqref{mild} as
\begin{align}\label{rmild}
\dis f(t,x,v)=&e^{-\nu(v)t}f_0(x-vt,v)+\int_0^t e^{-\nu(v)(t-s)}(K^mf)(s,x-v(t-s),v)ds \notag\\
&+\int_0^t e^{-\nu(v)(t-s)}(K^cf)(s,x-v(t-s),v)ds\notag\\
&+\int_0^t e^{-\nu(v)(t-s)}\Ga(f,f)(s,x-v(t-s),v)ds,
\end{align}
where $K^m$ is defined in \eqref{Km} and $K^c=K-K^m$. 

\subsection{Estimates on $\Ga$}We first introduce the following lemma in order to estimate $\Ga$. 

\begin{lemma}\label{Gama}
Let $\ga$, $\be$ and $p$ satisfy the assumption in Theorem \ref{local} and $3/(3+\ga)< q< p$. Then for any positive $T_0$, $\bar{T}$ with $0\leq T_0\leq \bar{T}$, there is a strictly positive constant $C$ such that 
\begin{align}
\left\|w_{\be-\ga}\Ga_-(f,f) \right\|_{L^p_vL^{\infty}_{T_0,\bar{T}}L^{\infty}_{x}}\leq& C \left\| w_\be f\right\|^{1+\frac{p(q-1)}{q(p-1)}}_{L^p_vL^{\infty}_{T_0,\bar{T}}L^{\infty}_{x}}\left\|f\right\|^{\frac{p-q}{q(p-1)}}_{L^{\infty}_{T_0,\bar{T}}L^{\infty}_{x}L^1_v} \label{Ga-1}\\
\left\|w_{\be-\ga}\Ga_+(f,f) \right\|_{L^p_vL^{\infty}_{T_0,\bar{T}}L^{\infty}_{x}}\leq& C \left\| w_\be f\right\|^{\frac{1}{8}\left(\frac{1}{q}-\frac{1}{p}\right)+1+\frac{r}{p}}_{L^p_vL^{\infty}_{T_0,\bar{T}}L^{\infty}_{x}}\left\|f\right\|^{\frac{1}{8}\left(\frac{1}{q}-\frac{1}{p}\right)}_{L^{\infty}_{T_0,\bar{T}}L^{\infty}_{x}L^1_v} \label{Ga+1},
\end{align}
where $r=p-\frac{p-q}{4q}$
\end{lemma}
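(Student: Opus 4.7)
The plan is to treat the loss term $\Ga_-$ and the gain term $\Ga_+$ separately, powered in both cases by H\"older's inequality together with the $L^q$-interpolation
\[
\|g\|_{L^q_v} \leq \|g\|_{L^1_v}^{(p-q)/(q(p-1))}\,\|g\|_{L^p_v}^{p(q-1)/(q(p-1))},\quad 1 < q < p,
\]
combined with the pointwise bound $|f|\leq|w_\be f|$ (valid since $w_\be\geq 1$) and the sup-integrate inequality $\sup_{t,x}\|f(t,x,\cdot)\|_{L^p_u}\leq \|w_\be f\|_{L^p_v L^\infty_{T_0,\bar T}L^\infty_x}$. The hypothesis $q > 3/(3+\ga)$ is used precisely to make $\ga q' > -3$, whence H\"older converts the action of $|v-u|^\ga\sqrt{\mu(u)}$ into a factor $(1+|v|)^\ga$ that cancels the excess weight $w_{-\ga}(v)=(1+|v|)^{-\ga}$ appearing on the left-hand side.

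For $\Ga_-$ I would start from the identity $\Ga_-(f,f)(t,x,v) = f(t,x,v)\int B(v-u,\theta)\sqrt{\mu(u)}f(t,x,u)\,d\om\,du$ and write $w_{\be-\ga}(v) = w_\be(v)(1+|v|)^{-\ga}$. Pulling out $w_\be(v)|f(t,x,v)|$, estimating the inner integral by H\"older with conjugate pair $(q',q)$, interpolating $\|f(t,x,\cdot)\|_{L^q_u}$ between $L^1$ and $L^p$, and then using the sup-integrate inequality to swap $\sup_{t,x}\|f(t,x,\cdot)\|_{L^p_u}$ for $\|w_\be f\|_{L^p_v L^\infty_{T_0,\bar T}L^\infty_x}$ yields the pointwise-in-$v$ bound
\[
\sup_{t,x}w_{\be-\ga}(v)|\Ga_-(f,f)|\leq C\,\big(\sup_{t,x}w_\be(v)|f(t,x,v)|\big)\,\|f\|_{L^\infty L^\infty L^1_v}^{(p-q)/(q(p-1))}\|w_\be f\|_{L^p_v L^\infty L^\infty}^{p(q-1)/(q(p-1))},
\]
and taking the outer $L^p_v$ norm contributes one more factor of $\|w_\be f\|_{L^p_v L^\infty_{T_0,\bar T}L^\infty_x}$, matching \eqref{Ga-1} exactly.

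For $\Ga_+$ the template is the same, but the coupling of the post-collisional variables $u', v'$ to $(v,u,\om)$ accounts for most of the difficulty. The first step is to use $w_\be(v)\leq C(w_\be(u')+w_\be(v'))$ (energy conservation) together with the $\om$-reflection symmetry $u'\leftrightarrow v'$ to collapse the two resulting pieces into a single one carrying $w_\be(v')$. Then $|f(u')||f(v')|$ is split into a product of sub-powers and, after passing to the coordinates $z_\para=(u-v)\cdot\om$, $z_\perp=z-z_\para\om$ exactly as in the local existence computation, the $v$-dependence of the resulting kernel can be controlled with variants of the bound \eqref{I_22}, while the two $f$-factors are dispatched to $L^1_v$ (via the change of variables $du\,d\om\mapsto du'\,dv'$) and to $L^p_v$ (via the $L^1$--$L^p$ interpolation above). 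The peculiar exponents $\tfrac{1}{8}(\tfrac{1}{q}-\tfrac{1}{p})$ and $r=p-(p-q)/(4q)$ appearing in \eqref{Ga+1} should emerge from an iterated H\"older in which the $L^1_v$-gain is divided symmetrically between the $u'$- and $v'$-factors and then further distributed when pulled through another H\"older pairing. The main obstacle will be the bookkeeping: selecting the splitting parameters so that every integral in $(z_\para,z_\perp,\om)$ is finite, which is exactly where the full strength of the hypotheses $p>\max\{6/(5+\ga),\,4/(3-\ga),\,(2-\ga)/2\}$ and $\be>\max\{3/p',\,36,\,6-2\ga\}$ is invoked, and where producing the exact form of \eqref{Ga+1} rather than a merely sufficient variant requires a careful choice at every step.
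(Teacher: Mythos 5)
Your treatment of $\Ga_-$ is correct and is exactly the paper's argument: pull out $w_\be(v)|f(t,x,v)|$, apply H\"older with the pair $(q',q)$ so that $\bigl(\int|v-u|^{\ga q'}e^{-q'|u|^2/4}du\bigr)^{1/q'}\leq C(1+|v|)^{\ga}$ cancels the excess weight $(1+|v|)^{-\ga}$ (this is where $q>3/(3+\ga)$ enters), interpolate $\|f(t,x,\cdot)\|_{L^q_u}$ between $L^1_u$ and $L^p_u$, and take the outer $L^p_v$ norm to collect the last factor of $\|w_\be f\|_{L^p_vL^\infty_{T_0,\bar T}L^\infty_x}$. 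Nothing is missing there.

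For $\Ga_+$, however, the two steps that actually generate the exponents in \eqref{Ga+1} are left as acknowledged ``bookkeeping,'' and the mechanism you guess at is not the one that works. First, the splitting is not symmetric between the $u'$- and $v'$-factors: after the weight redistribution and the $(q',q)$ H\"older one faces $\bigl(\int\!\!\int e^{-|u|^2/4}|(w_\be f)(u')|^q|f(v')|^q d\om\,du\bigr)^{1/q}$, and the entire $u'$-factor is sent to the $L^p$ side while only the $v'$-factor is split, as $|f(v')|^q=|f(v')|^{\frac{p-q}{4p}}\cdot|f(v')|^{q-\frac{p-q}{4p}}$; a further H\"older arranged so that the two pieces carry the outer powers $\frac1q-\frac1p$ and $\frac1p$ is precisely what produces $r=p-\frac{p-q}{4q}$ and leaves the prefactor $\bigl(\int\!\!\int e^{-|u|^2/4}|f(t,x,v')|^{1/4}d\om\,du\bigr)^{\frac1q-\frac1p}$. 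Second, that prefactor is not yet an $L^1_v$ norm of $f$: the $(z_\para,z_\perp)$ reduction turns it into $\int|f(t,x,\eta)|^{1/4}|\eta-v|^{-2}d\eta$, whose kernel is singular at $\eta=v$ and not integrable at infinity, so it cannot be dispatched to $\|f\|_{L^1_v}^{1/4}$ by H\"older alone. The paper's cure is the additional pointwise splitting $|f|^{1/4}\leq|f|^{1/8}|w_{\be/2}f|^{1/8}(1+|\eta|)^{-\be/16}$ followed by a $(4,8/3)$ H\"older, which requires $\be>36$ so that $\int(1+|\eta|)^{-\be/12}|\eta-v|^{-8/3}d\eta$ converges uniformly in $v$; this single step is the sole source of the exponents $\tfrac18(\tfrac1q-\tfrac1p)$ on \emph{both} norms in \eqref{Ga+1}, the $\|w_{\be/2}f\|_{L^1_v}^{1/8}$ piece being converted to $\|w_\be f\|_{L^p_v}^{1/8}$ by one more H\"older. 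Without these two steps your outline does not reach \eqref{Ga+1}, so the $\Ga_+$ half of the proof has a genuine gap.
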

\begin{proof}

Assume $T_0\leq t\leq \bar{T}$. We first prove inequality \eqref{Ga-1}. Denote $q'=\frac{q}{q-1}$. By H\"older's inequality, we have
\begin{align}
\dis
&\left|w_{\be-\ga}(v)\Ga_-(f,f)(t,x,v) \right|\notag \\
&= \left| (1+|v|)^{-\ga}\int_{\R^3} \int_{\S^2} |v-u|^\gamma b(\theta)w_\beta(v)e^{-\frac{|u|^2}{4}}f(t,x,u)f(t,x,v)d\omega du  \right| \notag \\
&\leq C (1+|v|)^{-\ga}\left(\int_{\R^3} \int_{\S^2} \left| |v-u|^\gamma|\cos\theta|e^{-\frac{|u|^2}{4}} \right|^{q'}d\omega du \right)^\frac{1}{q'}\notag \\
& \quad \times \left(\int_{\R^3}|f(t,x,u) |^q |(w_\be f)(t,x,v) |^q du\right)^\frac{1}{q}.\label{InequlityGa-}
\end{align}
Notice that we require $q>3/(3+\ga)$, which implies $\ga q'>-3$. Then it holds that
\begin{align*}
&\left(\int_{\R^3} \int_{\S^2} \left| |v-u|^\gamma|\cos\theta|e^{-\frac{|u|^2}{4}} \right|^{q'}d\omega du \right)^\frac{1}{q'}\notag \\
&\leq C\left(\int_{\R^3} \left| |v-u|^\gamma e^{-\frac{|u|^2}{4}} \right|^{q'}du \right)^\frac{1}{q'}\notag \\
&\leq C (1+|v|)^{\ga}.
\end{align*}
We substitute this inequality into \eqref{InequlityGa-} and obtain
\begin{align}
\dis
&\left|w_{\be-\ga}(v)\Ga_-(f,f)(t,x,v) \right|\notag \\
&\leq C|(w_\be f)(t,x,v) |\left(\int_{\R^3} | f(t,x,u) |^q  du\right)^\frac{1}{q}\notag \\
&\leq C|(w_\be f)(t,x,v) |\left(\int_{\R^3}| f(t,x,u) | du\right)^\frac{p-q}{q(p-1)}\left(\int_{\R^3}|(w_\beta f)(t,x,u) |^p du\right)^\frac{(q-1)}{q(p-1)} \label{interpo}
\end{align}
by the interpolation inequality in Lebesgue spaces $\|f\|_{L^q}\leq \|f\|^{\frac{p-q}{q(p-1)}}_{L^1} \|f\|^{\frac{p(q-1)}{q(p-1)}}_{L^p}$ for $1<q<p$. Then the inequality \eqref{Ga-1} follows from \eqref{interpo} by taking the $L^p_vL^{\infty}_{T_0,\bar{T}}L^{\infty}_{x}$ norm.

Next we prove the inequality \eqref{Ga+1}, noticing that we can exchange $u'$ and $v'$ by a rotation and $w_\be(v)\leq C\left(w_\be(v')+w_\be(u') \right)$ for some constant $C$, similar arguments as \eqref{InequlityGa-} yield that
\begin{align*}
\dis
&\left|w_{\be-\ga}(v)\Ga_+(f,f)(t,x,v) \right|\notag \\
&\leq C (1+|v|)^{-\ga}\left(\int_{\R^3} \int_{\S^2}  |v-u|^{\gamma q'}|\cos\theta| ^{\gamma q'}e^{-\frac{|u|^2}{4}}d\omega du \right)^\frac{1}{q'}\notag \\
& \quad \times \left(\int_{\R^3}e^{-\frac{|u|^2}{4}}|(w_\be f)(t,x,u') |^q |f(t,x,v') |^q d\omega du\right)^\frac{1}{q}\notag \\
&\leq C \left(\int_{\R^3} \int_{\S^2}e^{-\frac{|u|^2}{4}}|(w_\be f)(t,x,u') |^q |f(t,x,v') |^q d\omega du \right)^\frac{1}{q}.
\end{align*}
Write $|f(t,x,v') |^q=|f(t,x,v') |^{\frac{p-q}{4p}}|f(t,x,v') |^{q-\frac{p-q}{4p}}$. Applying H\"older's inequality to the last term above, it holds that
\begin{align}
\dis
&\left|w_{\be-\ga}(v)\Ga_+(f,f)(t,x,v) \right|\notag \\
&\leq C  \left(\int_{\R^3} \int_{\S^2}e^{-\frac{|u|^2}{4}}|f(t,x,v') |^\frac{1}{4} d\omega du\right)^{\frac{1}{q}-\frac{1}{p}} \notag \\
& \quad \times
\left(\int_{\R^3} \int_{\S^2}e^{-\frac{|u|^2}{4}}|(w_\be f)(t,x,u') |^p |f(t,x,v') |^r d\omega du\right)^\frac{1}{p},\label{Gama+}
\end{align}
where $r=p-\frac{p-q}{4q}\leq p$. For convenience, we define
\begin{align*}
\dis
\tilde{I_2}:=&\int_{\R^3} \int_{\S^2}e^{-\frac{|u|^2}{4}}|f(t,x,v') |^\frac{1}{4} d\omega du.
\end{align*}
Using tranformation $z=u-v$, $z_{\shortparallel}=(u-v)\cdot \omega$, $z_{\perp}=z-z_{\para}$ as \eqref{trans}, we have
\begin{align}\label{estitildeI2}
\dis
\tilde{I_2}&\leq \int_{\R^3}\int_{z_\perp}e^{-\frac{|z_\perp+\eta|^2}{4}}dz_\perp|f(t,x,\eta)|^\frac{1}{4}\frac{1}{|\eta-v|^{2}} d\eta \notag \\
&\leq \int_{\R^3}|f(s,y,\eta)|^\frac{1}{4}\frac{1}{|\eta-v|^{2}} d\eta
\end{align}
It is direct to get $\be/12>3$ from our assumption that $\be>36$. Then $\int_{\R^3}(1+|\eta|)^{-\frac{\be }{12}}|\eta-v|^{-\frac{8}{3}}d\eta$ will be uniformly bounded in $v$. By $|f(t,x,\eta)|^\frac{1}{4}\leq |f(t,x,\eta)|^\frac{1}{8}|w_{\be/2}f(t,x,\eta)|^\frac{1}{8}(1+|\eta|)^{-\frac{\be }{16}}$, H\"older's inequality and \eqref{estitildeI2}, we obtain
\begin{align}\label{InitildeI2}
\dis \tilde{I_2}&\leq C \int_{\R^3}|f(t,x,\eta)|^\frac{1}{8}|w_{\be/2}f(t,x,\eta)|^\frac{1}{8}\frac{(1+|\eta|)^{-\frac{\be }{16}}}{|\eta-v|^{2}} d\eta \notag \\
&\leq C \left( \int_{\R^3}|f(t,x,\eta)|^\frac{1}{2}|w_{\be/2}f(t,x,\eta)|^\frac{1}{2}d\eta  \right)^\frac{1}{4}\left( \int_{\R^3}\frac{(1+|\eta|)^{-\frac{\be }{12}}}{|\eta-v|^{\frac{8}{3}}}d\eta  \right)^\frac{3}{8}\notag \\
&\leq C\left(\int_{\R^3}| f(t,x,\eta) | d\eta\right)^\frac{1}{8}\left(\int_{\R^3}|(w_{\beta/2} f)(t,x,\eta) | d\eta\right)^\frac{1}{8}.
\end{align}
Using the relation
\begin{align*}
&\left(\int_{\R^3}\left|(w_{\beta/2} f)(t,x,\eta) \right| d\eta\right)\notag \\
&\leq \left(\int_{\R^3}\left|\frac{1}{(1+|v|)^\frac{\be}{2}}\right|^{p'} d\eta\right)^\frac{1}{p'}\left(\int_{\R^3}\left|(w_\beta f)(t,x,\eta)\right|^p d\eta\right)^\frac{1}{p}\notag \\
&\leq C\left(\int_{\R^3}|(w_\beta f)(t,x,\eta) |^p d\eta\right)^\frac{1}{p},
\end{align*}
we have from \eqref{InitildeI2} that
\begin{align*}
\dis
\tilde{I_2}=&\int_{\R^3} \int_{\S^2}e^{-\frac{|u|^2}{4}}|f(t,x,v') |^\frac{1}{4} d\omega du\notag \\
&\leq C\left(\int_{\R^3}| f(t,x,\eta) | d\eta\right)^\frac{1}{8}\left(\int_{\R^3}|(w_\beta f)(t,x,\eta) |^p d\eta\right)^\frac{1}{8p}\notag \\
&\leq C \left\|f\right\|^{\frac{1}{8}}_{L^{\infty}_{T_0,\bar{T}}L^{\infty}_{x}L^1_v} \left\| w_\be f\right\|^{\frac{1}{8}}_{L^p_vL^{\infty}_{T_0,\bar{T}}L^{\infty}_{x}}.
\end{align*}
Together with \eqref{Gama+}, after taking the $L^p_vL^{\infty}_{T_0,\bar{T}}L^{\infty}_{x}$ norm and by $dudv=du'dv'$, \eqref{Ga+1} follows from the fact that
\begin{align*}
\left\|w_{\be-\ga}\Ga_+(f,f) \right\|_{L^p_vL^{\infty}_{T_0,\bar{T}}L^{\infty}_{x}}&\leq C \left\| w_\be f\right\|^{\frac{1}{8}\left(\frac{1}{q}-\frac{1}{p}\right)}_{L^p_vL^{\infty}_{T_0,\bar{T}}L^{\infty}_{x}}\left\|f\right\|^{\frac{1}{8}\left(\frac{1}{q}-\frac{1}{p}\right)}_{L^{\infty}_{T_0,\bar{T}}L^{\infty}_{x}L^1_v}  \notag \\
&\quad  \times
\left(\int_{\R^3} \int_{\R^3}\|(w_\be f)(u') \|_{L^{\infty}_{T_0,\bar{T}}L^{\infty}_{x}}^p \|f(v') \|_{L^{\infty}_{T_0,\bar{T}}L^{\infty}_{x}}^r dudv\right)^\frac{1}{p}\notag \\
&\leq C  \left\| w_\be f\right\|^{\frac{1}{8}\left(\frac{1}{q}-\frac{1}{p}\right)}_{L^p_vL^{\infty}_{T_0,\bar{T}}L^{\infty}_{x}} \left\|f\right\|^{\frac{1}{8}\left(\frac{1}{q}-\frac{1}{p}\right)}_{L^{\infty}_{T_0,\bar{T}}L^{\infty}_{x}L^1_v}\notag \\
&\quad  \times \left(\int_{\R^3}\|f(v) \|_{L^p_vL^{\infty}_{T_0,\bar{T}}L^{\infty}_{x}}^r dv\right)^\frac{1}{p}
\left(\int_{\R^3} \|(w_\be f)(u) \|_{L^p_vL^{\infty}_{T_0,\bar{T}}L^{\infty}_{x}}^p du\right)^{\frac{1}{p}}\notag \\
&\leq C \left\| w_\be f\right\|^{\frac{1}{8}\left(\frac{1}{q}-\frac{1}{p}\right)+1+\frac{r}{p}}_{L^p_vL^{\infty}_{T_0,\bar{T}}L^{\infty}_{x}}\left\|f\right\|^{\frac{1}{8}\left(\frac{1}{q}-\frac{1}{p}\right)}_{L^{\infty}_{T_0,\bar{T}}L^{\infty}_{x}L^1_v} .
\end{align*}
In the last inequality above, we use the inequality
\begin{align*}
\dis &\left(\int_{\R^3}\|f(v) \|_{L^p_vL^{\infty}_{T_0,\bar{T}}L^{\infty}_{x}}^r dv\right)^\frac{1}{p}\notag \\
&\leq \left(\int_{\R^3}\left|\frac{1}{(1+|v|)^{r\be}} \right|^{p'} dv\right)^\frac{1}{p'}
\left(\int_{\R^3}\|w_\be f(v) \|_{L^p_vL^{\infty}_{T_0,\bar{T}}L^{\infty}_{x}}^p dv\right)^{\frac{1}{p}\cdot \frac{r}{p}}\notag \\
&\leq C
\left\| w_\be f\right\|^\frac{r}{p}_{L^p_vL^{\infty}_{T_0,\bar{T}}L^{\infty}_{x}}.
\end{align*}
We have completed the proof of Lemma \ref{Gama}.
\end{proof}

\subsection{Global ${L^p_vL^{\infty}_{T}L^{\infty}_{x}}$ Estimate}Now we can deduce the following result, which allows us to bound the ${L^p_vL^{\infty}_{T}L^{\infty}_{x}}$ norm of $w_\be f$ by the initial data, $\CE(F_0)$ and the product of $\left\|f\right\|_{L^{\infty}_{T}L^{\infty}_{x}L^1_v}$ and $\left\| w_\be f\right\|_{L^p_vL^{\infty}_{T}L^{\infty}_{x}}$.
\begin{lemma}\label{Estimate}
Let all the assumptions in Theorem \ref{local} be satisfied. It holds that
\begin{align}\label{LemmaEstimate}
\| w_\be f\|&_{L^p_vL^{\infty}_{T}L^{\infty}_{x}}\leq  C_2\left\{ \left\| w_\be f_0\right\|_{L^p_vL^{\infty}_{x}}+\left\| w_\be f_0\right\|^2+\sqrt{\CE(F_0)}+\CE(F_0) \right\}\notag\\
&+C_2 \left\| w_\be f\right\|^{1+\frac{p(q-1)}{q(p-1)}}_{L^p_vL^{\infty}_{T}L^{\infty}_{x}} \left\|f\right\|^{\frac{p-q}{q(p-1)}}_{L^{\infty}_{T_1,T}L^{\infty}_{x}L^1_v}+C_2  \left\| w_\be f\right\|^{\frac{1}{8}\left(\frac{1}{q}-\frac{1}{p}\right)+1+\frac{r}{q}}_{L^p_vL^{\infty}_{T}L^{\infty}_{x}} \left\|f\right\|^{\frac{1}{8}\left(\frac{1}{q}-\frac{1}{p}\right)}_{L^{\infty}_{T_1,T}L^{\infty}_{x}L^1_v},
\end{align}
for $T_1$ defined in \eqref{T_1}, $T\geq T_1$ and some constant $C_2>1$. 
\end{lemma}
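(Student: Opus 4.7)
The plan is to apply the $L^p_vL^\infty_T L^\infty_x$ norm to the mild form \eqref{rmild} multiplied by $w_\be(v)$ and estimate each of the four resulting terms separately. The free-streaming term $e^{-\nu(v)t}f_0(x-vt,v)$ contributes $\|w_\be f_0\|_{L^p_vL^\infty_x}$ directly. For the $K^m$ term, Lemma \ref{ProKm} combined with Fubini and the identity $du\,dv=du'\,dv'$ (used as in the $\Ga_+$ bound of Section 3) yields a bound of the form $Cm^{\ga+3/p'}\|w_\be f\|_{L^p_vL^\infty_T L^\infty_x}$, which for $m$ fixed sufficiently small (depending only on $\ga,\be$) can be absorbed on the left at the end of the argument.

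For the nonlinear term I exploit $\int_0^t e^{-\nu(v)(t-s)}\,ds\le C\nu(v)^{-1}\sim Cw_{-\ga}(v)$ together with $w_\be=w_{\be-\ga}\cdot w_\ga$ and $w_\ga\sim\nu$ to reduce the $\Ga$-contribution to a constant multiple of
\begin{align*}
\left\|w_\be\int_0^t e^{-\nu(v)(t-s)}\Ga(f,f)(s,\cdot,v)\,ds\right\|_{L^p_vL^\infty_T L^\infty_x}\le C\|w_{\be-\ga}\Ga(f,f)\|_{L^p_vL^\infty_T L^\infty_x}.
\end{align*}
Splitting the outer supremum over $[0,T]$ into pieces on $[0,T_1]$ and $[T_1,T]$ and applying Lemma \ref{Gama} to each, on $[0,T_1]$ I invoke the local bound $\|w_\be f\|_{L^p_vL^\infty_{T_1}L^\infty_x}\le 2\|w_\be f_0\|_{L^p_vL^\infty_x}$ from Theorem \ref{local} (producing the $\|w_\be f_0\|^2$ contribution) and use Lemma \ref{Taylor} to control $\|f\|_{L^\infty_{0,T_1}L^\infty_x L^1_v}$ in terms of $\sqrt{\CE(F_0)}+\CE(F_0)$, by splitting $|f|=|F-\mu|/\sqrt{\mu}$ according to whether $|F-\mu|\le\mu$, applying Cauchy--Schwarz in $v$ on the quadratic region and using the plain $L^1$ bound on the complement. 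On $[T_1,T]$, Lemma \ref{Gama} delivers precisely the two bilinear terms with $\|f\|_{L^\infty_{T_1,T}L^\infty_x L^1_v}$ appearing on the right-hand side of \eqref{LemmaEstimate}.

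The main obstacle is the $K^c$ term, which I handle by a Vidav-type double iteration: substitute \eqref{rmild} once more into $K^cf$ to obtain composites of $l(v,\eta)$ with $f_0$, $K^mf$, $K^cf$, and $\Ga(f,f)$. The $f_0$-composite is bounded by $\|w_\be f_0\|_{L^p_vL^\infty_x}$ via the kernel bound \eqref{Prol2} and H\"older's inequality; the iterated $K^m$-composite inherits the small factor $m^{\ga+3/p'}$ from Lemma \ref{ProKm}; the iterated $\Ga$-composite is treated exactly as in the previous paragraph. The genuinely new piece is the double-kernel term
\begin{align*}
\int_0^t\!\!\int_0^s\!\!\int_{\R^3}\!\!\int_{\R^3} e^{-\nu(v)(t-s)-\nu(\eta)(s-\tau)} l(v,\eta)\,l(\eta,\xi)\,f\bigl(\tau,x-v(t-s)-\eta(s-\tau),\xi\bigr)\,d\xi\,d\eta\,d\tau\,ds,
\end{align*}
which I split according to $s-\tau\le\ka$ versus $s-\tau>\ka$ for small $\ka>0$. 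On the small-time part the finer decay \eqref{Prol1} of $l$ together with the smallness of $\ka$ gives an absorbable bound for $\|w_\be f\|_{L^p_vL^\infty_T L^\infty_x}$. On the large-time part, the change of variable $\eta\mapsto y=x-v(t-s)-\eta(s-\tau)$, with Jacobian $\sim(s-\tau)^{-3}$ bounded by $\ka^{-3}$, allows $\sup_x$ to be brought inside and converts the $\eta$-integration into an $L^1_y$ integration of $\sup_v|f|$, producing an $L^\infty_x L^1_v$-norm of $f$ at time $\tau$. The $\tau$-integral is then split at $T_1$: on $[0,T_1]$ the local bound of Theorem \ref{local} together with Lemma \ref{Taylor} yield the $\|w_\be f_0\|^2$, $\sqrt{\CE(F_0)}$, and $\CE(F_0)$ contributions, while on $[T_1,T]$ one reads off the factor $\|f\|_{L^\infty_{T_1,T}L^\infty_x L^1_v}$. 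Assembling all pieces and absorbing the small $K^m$- and $\ka$-dependent terms on the left produces \eqref{LemmaEstimate}.
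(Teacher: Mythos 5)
Your overall architecture matches the paper's: decompose the weighted mild form into the four pieces $J_0$ (free streaming), $J_1$ ($K^m$), $J_2$ ($K^c$), $J_3$ ($\Gamma$), absorb the small-$m$ contribution of $K^m$, use Lemma \ref{Gama} with a time splitting at $T_1$ for the nonlinear term, and run a Vidav-type second iteration on $K^c$ whose double-kernel piece is handled by a time cutoff plus a change of variables feeding into Lemma \ref{Taylor}. However, two steps as you describe them would fail. First, you claim to control $\left\|f\right\|_{L^{\infty}_{0,T_1}L^{\infty}_{x}L^1_v}$ by $\sqrt{\CE(F_0)}+\CE(F_0)$ via Lemma \ref{Taylor}. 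That lemma only bounds the integral over $\Omega\times\R^3$; it gives no information on $\sup_{x}\int_{\R^3}|f|\,dv$, and indeed the entire point of the large-amplitude framework is that small $\CE(F_0)$ does \emph{not} force $f$ to be small uniformly in $x$. The correct (and simpler) route, which is what the paper does in \eqref{ObL1Lp1}--\eqref{ObL1Lp}, is H\"older in $v$ (using $\be p'>3$) to get $\left\|f\right\|_{L^{\infty}_{0,T_1}L^{\infty}_{x}L^1_v}\leq C\left\|w_\be f\right\|_{L^p_vL^{\infty}_{T_1}L^{\infty}_{x}}\leq C\left\|w_\be f_0\right\|_{L^p_vL^{\infty}_{x}}$, whence the $\left\|w_\be f_0\right\|^2$ term; the entropy never enters the $[0,T_1]$ part of $J_3$.

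Second, your treatment of the double-kernel term is incomplete and partly miscast. You split only in $s-\tau$, but the compact approximation of $l_{w_\be}$, the bound $\nu(v)^{-1}\nu(\eta)^{-1}\leq CN^6$, the boundedness of the kernel needed to pull it out before the change of variables $\eta\mapsto y$, and the control of the weight $w_\be(\xi)\mu(\xi)^{-1/2}\leq C_N$ needed to invoke Lemma \ref{Taylor} all require restricting to $|v|\leq N$, $|\eta|\leq 2N$, $|\xi|\leq 3N$; the complementary large-velocity regions must be handled separately (the paper's Cases 1--2, using $(1+|v|)^{-2/p'-\ga/p}$ decay and the Gaussian gain \eqref{Prol4}). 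Moreover, after the change of variables the $\eta,\xi$ integration becomes an $L^1_{x,v}$-type integral of $|F-\mu|$ at time $\tau$, which is exactly what Lemma \ref{Taylor} controls uniformly in $\tau$, producing $C_{m,N}(\la^{-3/2}\sqrt{\CE(F_0)}+\la^{-3}\CE(F_0))$; it does not produce an $L^{\infty}_{x}L^1_v$ norm of $f$, so there is nothing to ``read off'' on $[T_1,T]$, and no splitting of the $\tau$-integral at $T_1$ is needed or useful there. As written, the outputs you assign to this term do not match \eqref{LemmaEstimate}.
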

\begin{proof}
By the mild form \eqref{rmild}, for $(t,x,v)\in[0,T]\times \Omega\times \R^3$, it is noted that
\begin{align}\label{rrmild}
\dis (w_\be f)(t,x,v)&=e^{-\nu(v)t} (w_\be f_0)(x-vt,v)+\int_0^t e^{-\nu(v)(t-s)}(w_\be K^mf)(s,x-v(t-s),v)ds \notag\\
&\quad+\int_0^t e^{-\nu(v)(t-s)}(w_\be K^cf)(s,x-v(t-s),v)ds\notag\\
&\quad+\int_0^t e^{-\nu(v)(t-s)}(w_\be \Ga)(f,f)(s,x-v(t-s),v)ds\notag\\
&=J_0(t,x,v)+J_1(t,x,v)+J_2(t,x,v)+J_3(t,x,v).
\end{align}
We define
\begin{align}\label{tildeJ}
\tilde{J}(v):=&\left(  \int_{\R^3}\int_{\S^2}e^{-\frac{|u'|^2}{4}}\|f(v')\|_{L^{\infty}_{T}L^{\infty}_{x}}^p d\omega du  \right)^{\frac{1}{p}}+\left(  \int_{\R^3}\int_{\S^2}e^{-\frac{|v'|^2}{4}}\|f(u')\|_{L^{\infty}_{T}L^{\infty}_{x}}^p d\omega du \right)^{\frac{1}{p}}\notag\\
&+ \left(  \int_{\R^3}\int_{\S^2}e^{-\frac{|v|^2}{4}}\|f(u)\|_{L^{\infty}_{T}L^{\infty}_{x}}^p d\omega du  \right)^{\frac{1}{p}}.
\end{align}
It follows from Lemma \ref{ProKm} that
\begin{align*}
|J_1(t,x,v)|&\leq \int_0^t e^{-\nu(v)(t-s)}\left|(w_\be K^mf)(s,x-v(t-s),v)\right|ds\notag\\ 
&\leq Cm^{\ga+\frac{3}{p'}}\tilde{J}(v)w_\be(v) e^{-\frac{|v|^2}{10}} \int^t_0e^{-\nu(v)(t-s)}ds  \notag\\
&\leq Cm^{\ga+\frac{3}{p'}}\tilde{J}(v).
\end{align*}
In that last inequality above, we use the fact that $w_\be(v) e^{-\frac{|v|^2}{10}} \int^t_0e^{-\nu(v)(t-s)}ds =\frac{w_\be(v)}{\nu(v)}e^{-\frac{|v|^2}{10}}\leq C$. Then after taking the $L^p_vL^{\infty}_{T}L^{\infty}_{x}$ norm, by $dudu=du'dv'$ and the definition of $\tilde{J}(v)$ \eqref{tildeJ}, we have
\begin{align}\label{J1}
\dis\left\| J_1 \right\|_{L^p_vL^{\infty}_{T}L^{\infty}_{x}}&\leq Cm^{\ga+\frac{3}{p'}}\|\tilde{J}\|_{L^p_v}\notag\\
&\leq Cm^{\ga+\frac{3}{p'}}
\left(  \int_{\R^3}\int_{\R^3}e^{-\frac{|u'|^2}{4}}\|f(v')\|_{L^{\infty}_{T}L^{\infty}_{x}}^p dudv  \right.\notag\\
&\qquad\quad \left.+  \int_{\R^3}\int_{\R^3}e^{-\frac{|v'|^2}{4}}\|f(u')\|_{L^{\infty}_{T}L^{\infty}_{x}}^p dudv  +  \int_{\R^3}\int_{\R^3}e^{-\frac{|v|^2}{4}}\|f(u)\|_{L^{\infty}_{T}L^{\infty}_{x}}^p dudv  \right)^{\frac{1}{p}}\notag\\
&
\leq Cm^{\ga+\frac{3}{p'}}\left\| w_\be f\right\|_{L^p_vL^{\infty}_{T}L^{\infty}_{x}}.
\end{align}
Next we consider $J_3$. It is noted that
\begin{align}\label{J31st}
\left|J_3(t,x,v)\right|&=\left|\int_0^t e^{-\nu(v)(t-s)}(w_\be \Ga)(f,f)(s,x-v(t-s),v)ds\right|\notag\\
&\leq \left|\int_0^t e^{-\nu(v)(t-s)}\nu(v)ds\right|\left\|(w_{\be-\ga} \Ga)(f,f)(v)\right\|_{L^{\infty}_{T_0,T}L^{\infty}_{x}}\notag\\
&\leq \left\|(w_{\be-\ga} \Ga)(f,f)(v)\right\|_{L^{\infty}_{T}L^{\infty}_{x}}.\end{align}
We observe the fact that
\begin{align}\label{Ga-2}
\left\| w_{\be-\ga} \Ga(f,f)\right\|_{L^p_vL^{\infty}_{T}L^{\infty}_{x}}\leq C\left\| w_{\be-\ga} \Ga(f,f)\right\|_{L^p_vL^{\infty}_{T_1}L^{\infty}_{x}}+C\left\| w_{\be-\ga} \Ga(f,f)\right\|_{L^p_vL^{\infty}_{T_1,T}L^{\infty}_{x}}
\end{align}
for some strictly positive constant $C$. Then it follows from \eqref{Ga-1} and \eqref{Ga-2} that
\begin{align}\label{Ga-3}
\left\| w_{\be-\ga}\Ga_-(f,f)\right\|_{L^p_vL^{\infty}_{T}L^{\infty}_{x}}\leq C \left\|f\right\|^{\frac{p-q}{q(p-1)}}_{L^{\infty}_{T_1}L^{\infty}_{x}L^1_v} \left\| w_\be f\right\|^{1+\frac{p(q-1)}{q(p-1)}}_{L^p_vL^{\infty}_{T_1}L^{\infty}_{x}}+C \left\|f\right\|^{\frac{p-q}{q(p-1)}}_{L^{\infty}_{T_1,T}L^{\infty}_{x}L^1_v} \left\| w_\be f\right\|^{1+\frac{p(q-1)}{q(p-1)}}_{L^p_vL^{\infty}_{T_1,T}L^{\infty}_{x}}.
\end{align}
By H\"older's inequality, one gets that 
\begin{align}\label{ObL1Lp1}
&\left\|f(t,x)\right\|_{L^1_v}\leq C\left\|w_\be f(t,x)\right\|_{L^p_v}\leq C \left\| w_\be f\right\|_{L^p_vL^{\infty}_{T_1}L^\infty_x}
\end{align}
for $t\in [0,T_1]$, $\be>3$. Then by \eqref{ObL1Lp1}, \eqref{LE} and the fact that $\frac{p-q}{q(p-1)}+1+\frac{p(q-1)}{q(p-1)}=2$, we have
\begin{align}\label{ObL1Lp}
\left\|f\right\|^{\frac{p-q}{q(p-1)}}_{L^{\infty}_{T_1}L^{\infty}_{x}L^1_v} \left\| w_\be f\right\|^{1+\frac{p(q-1)}{q(p-1)}}_{L^p_vL^{\infty}_{T_1}L^{\infty}_{x}}\leq C \left\|w_\be f\right\|^{\frac{p-q}{q(p-1)}}_{L^p_vL^{\infty}_{T_1}L^{\infty}_{x}} \left\| w_\be f\right\|^{1+\frac{p(q-1)}{q(p-1)}}_{L^p_vL^{\infty}_{T_1}L^{\infty}_{x}}\leq C\left\| w_\be f_0\right\|^2_{L^p_vL^{\infty}_{x}}.
\end{align}
It holds from \eqref{Ga-3}, \eqref{ObL1Lp} that
\begin{align}
\| w_{\be-\ga}\Ga_-(f,f)\|_{L^p_vL^{\infty}_{T}L^{\infty}_{x}}\leq  C\left\| w_\be f_0\right\|_{L^p_vL^\infty_x}^2+C\left\| w_\be f\right\|^{1+\frac{p(q-1)}{q(p-1)}}_{L^p_vL^{\infty}_{T}L^{\infty}_{x}}\left\|f\right\|^{\frac{p-q}{q(p-1)}}_{L^{\infty}_{T_1,T}L^{\infty}_{x}L^1_v} \label{Ga-}.
\end{align}
Using similar arguments as \eqref{Ga-2}, \eqref{Ga-3}, \eqref{ObL1Lp1}, \eqref{ObL1Lp} and the fact that $\frac{1}{8}\left(\frac{1}{q}-\frac{1}{p}\right)+1+\frac{r}{q}+\frac{1}{8}\left(\frac{1}{q}-\frac{1}{p}\right)=2$, one gets the estimate for $\Ga_+$ from \eqref{Ga+1} that
\begin{align}
\| w_{\be-\ga}\Ga_+(f,f)\|_{L^p_vL^{\infty}_{T}L^{\infty}_{x}}\leq  C\left\| w_\be f_0\right\|_{L^p_vL^\infty_x}^2+C\left\| w_\be f\right\|^{\frac{1}{8}\left(\frac{1}{q}-\frac{1}{p}\right)+1+\frac{r}{p}}_{L^p_vL^{\infty}_{T}L^{\infty}_{x}} \left\|f\right\|^{\frac{1}{8}\left(\frac{1}{q}-\frac{1}{p}\right)}_{L^{\infty}_{T_1,T}L^{\infty}_{x}L^1_v} \label{Ga+},
\end{align}
Then it follows from \eqref{J31st}, \eqref{Ga-} and \eqref{Ga+} that
\begin{align}\label{estiJ3}
\left\| J_3 \right\|_{L^p_vL^{\infty}_{T}L^{\infty}_{x}}\leq C&\left\| w_\be f_0\right\|_{L^p_vL^\infty_x}^2+C\left\| w_\be f\right\|^{1+\frac{p(q-1)}{q(p-1)}}_{L^p_vL^{\infty}_{T}L^{\infty}_{x}}\left\|f\right\|^{\frac{p-q}{q(p-1)}}_{L^{\infty}_{T_1,T}L^{\infty}_{x}L^1_v}\notag\\
&+C\left\| w_\be f\right\|^{\frac{1}{8}\left(\frac{1}{q}-\frac{1}{p}\right)+1+\frac{r}{p}}_{L^p_vL^{\infty}_{T}L^{\infty}_{x}} \left\|f\right\|^{\frac{1}{8}\left(\frac{1}{q}-\frac{1}{p}\right)}_{L^{\infty}_{T_1,T}L^{\infty}_{x}L^1_v}.
\end{align}
Obviously it holds that $\left\| J_0 \right\|_{L^p_vL^{\infty}_{T}L^{\infty}_{x}}\leq C\left\| w_\be f_0\right\|_{L^p_vL^{\infty}_{x}}$. Together with \eqref{J1} and \eqref{estiJ3}, we have 
\begin{align}\label{J0J1J3}
\left\| J_0+J_1+J_3 \right\|&_{L^p_vL^{\infty}_{T}L^{\infty}_{x}}
\leq  C\left\| w_\be f_0\right\|_{L^p_vL^{\infty}_{x}}+C\left\| w_\be f_0\right\|_{L^p_vL^\infty_x}^2+Cm^{\ga+\frac{3}{p'}}\left\| w_\be f\right\|_{L^p_vL^{\infty}_{T}L^{\infty}_{x}}\notag\\
+C  &\left\{
 \left\| w_\be f\right\|^{1+\frac{p(q-1)}{q(p-1)}}_{L^p_vL^{\infty}_{T}L^{\infty}_{x}}\left\|f\right\|^{\frac{p-q}{q(p-1)}}_{L^{\infty}_{T_1,T}L^{\infty}_{x}L^1_v}+  \left\| w_\be f\right\|^{\frac{1}{8}\left(\frac{1}{q}-\frac{1}{p}\right)+1+\frac{r}{p}}_{L^p_vL^{\infty}_{T}L^{\infty}_{x}} \left\|f\right\|^{\frac{1}{8}\left(\frac{1}{q}-\frac{1}{p}\right)}_{L^{\infty}_{T_1,T}L^{\infty}_{x}L^1_v} \right\}.
\end{align}
We need to treat $J_2(t,x,v)$ carefully. Let $x_1=x-v(t-s)$. Recall from \eqref{RepKc} that $\left(K^cg\right)(v)=\int_{\R^3}l(v,\eta)g(\eta)d\eta$ and $l_{w_\be}(v,\eta)=l(v,\eta)\frac{w_\be(v)}{w_\be(\eta)}$. Using the mild form \eqref{rmild}, we can rewrite $J_2(t,x,v)$ as
\begin{align*}
\dis J_2(t,x,v)=&\int_0^t e^{-\nu(v)(t-s)}(w_\be K^cf)(s,x_1,v)ds\notag\\
=&\int_0^t e^{-\nu(v)(t-s)}\int_{\R^3}w_\be(v)l(v,\eta)f(s,x_1,\eta)d\eta ds\notag\\
=& \int^t_0 e^{-\nu(v)(t-s)}\int_{\R^3}l_{w_\be}(v,\eta)e^{-\nu(\eta)s}(w_\be f_0)(x_1-\eta s,\eta)d\eta ds\notag\\
&+\int^t_0 e^{-\nu(v)(t-s)}\int_{\R^3}l_{w_\be}(v,\eta)\int^s_0 e^{-\nu(\eta)(s-s_1)}\left(w_\be K^mf\right)(s_1,x_1-\eta(s-s_1),\eta)ds_1d\eta ds\notag\\
&+\int^t_0 e^{-\nu(v)(t-s)}\int_{\R^3}\int_{\R^3}l_{w_\be}(v,\eta)l_{w_\be}(\eta,\xi)\notag\\
&\quad\times \int^s_0e^{-\nu(\eta)(s-s_1)}(w_\be f)(s_1,x_1-\eta(s-s_1),\xi)ds_1d\eta d\xi ds\notag\\
&+\int^t_0 e^{-\nu(v)(t-s)}\int_{\R^3}l_{w_\be}(v,\eta) \notag\\
&\quad\times\int^s_0e^{-\nu(\eta)(s-s_1)}\left(w_\be\Ga(f,f)\right)(s_1,x_1-\eta(s-s_1),\eta)ds_1d\eta ds.
\end{align*}
We take the absolute value of $J_2(t,x,v)$ to obtain
\begin{align*}
|J_2(t,x,v)|\leq J_{20}(t,x,v)+J_{21}(t,x,v)+J_{22}(t,x,v)+J_{23}(t,x,v),
\end{align*}
where
\begin{align*}
J_{20}(t,x,v):=&\int^t_0 e^{-\nu(v)(t-s)}\int_{\R^3}\left|l_{w_\be}(v,\eta)e^{-\nu(\eta)s}(w_\be f_0)(x_1-\eta s,\eta)\right|d\eta ds\notag\\
J_{21}(t,x,v):=&\int^t_0 e^{-\nu(v)(t-s)}\int_{\R^3}\left|l_{w_\be}(v,\eta) \right|\notag\\
&\quad \times\int^s_0\left|e^{-\nu(\eta)(s-s_1)}\left(w_\be K^mf\right)(s_1,x_1-\eta(s-s_1),\eta)\right|ds_1d\eta ds\notag\\
J_{22}(t,x,v):=&\int^t_0 e^{-\nu(v)(t-s)}\int_{\R^3}\int_{\R^3}\left|l_{w_\be}(v,\eta)l_{w_\be}(\eta,\xi)\right|\notag\\
&\quad\times \int^s_0\left|e^{-\nu(\eta)(s-s_1)}(w_\be f)(s_1,x_1-\eta(s-s_1),\xi)\right|ds_1 d\eta d\xi ds\notag\\
J_{23}(t,x,v):=&\int^t_0 e^{-\nu(v)(t-s)}\int_{\R^3}\left|l_{w_\be}(v,\eta) \right|\notag\\
&\quad\times\int^s_0\left|e^{-\nu(\eta)(s-s_1)}\left(w_\be\Ga(f,f)\right)(s_1,x_1-\eta(s-s_1),\eta)\right|ds_1d\eta ds.
\end{align*}
We bound the above four terms $\{J_{2i}\}_{i=0}^3$ one by one. Using the property \eqref{Prol1} and H\"older's inequality, we obtain
\begin{align}
\dis J_{20}(t,x,v&)= \int^t_0 e^{-\nu(v)(t-s)}\int_{\R^3}\left|l_{w_\be}(v,\eta)e^{-\nu(\eta)s}(w_\be f_0)(x_1-\eta s,\eta)\right|d\eta ds\notag\\&\leq \int^t_0 e^{-\nu(v)(t-s)}\int_{\R^3}l_{w_\be}(v,\eta)e^{-\nu(\eta)s}\left|(w_\be f_0)(x_1-\eta s,\eta)\right|d\eta ds\notag\\
&\leq  \int^t_0 e^{-\nu(v)(t-s)}\left(\int_{\R^3}l(v,\eta)\left|\frac{w_\be (v)}{w_\be (\eta)}\right|^{p'}d\eta\right)^{\frac{1}{p'}}
\left(\int_{\R^3}l(v,\eta)\left|(w_\be f_0)(x_1-\eta s,\eta)\right|^pd\eta\right)^{\frac{1}{p}}ds \notag\\
&\leq  C_m\left|\frac{\nu(v)}{(1+|v|)^2}\right|^{\frac{1}{p'}}\int^t_0 e^{-\nu(v)(t-s)}
\left(\int_{\R^3}l(v,\eta)\left|(w_\be f_0)(x_1-\eta s,\eta)\right|^pd\eta\right)^{\frac{1}{p}}ds .\label{estiJ20}
\end{align}
We observe that $\left|(w_\be f_0)(x_1-\eta s,\eta)\right|\leq \left\|(w_\be f_0)(\eta)\right\|_{L^\infty_x}$, then $\int_{\R^3}l(v,\eta)\left\|(w_\be f_0)(\eta)\right\|_{L^\infty_x}^pd\eta$ does not depend on $s$, which together with $\int^t_0 e^{-\nu(v)(t-s)}ds\leq \frac{1}{\nu(v)}$ and \eqref{estiJ20} yield that

\begin{align*}
\dis J_{20}(t,x,v)&\leq  C_m\left|\frac{\nu(v)}{(1+|v|)^2}\right|^{\frac{1}{p'}}\int^t_0 e^{-\nu(v)(t-s)}ds
\left(\int_{\R^3}l(v,\eta)\left\|(w_\be f_0)(\eta)\right\|_{L^\infty_x}^pd\eta\right)^{\frac{1}{p}}\notag\\
&\leq  \frac{C_m}{\nu(v)}\left|\frac{\nu(v)}{(1+|v|)^2}\right|^{\frac{1}{p'}}
\left(\int_{\R^3}l(v,\eta)\left\|(w_\be f_0)(\eta)\right\|_{L^\infty_x}^pd\eta\right)^{\frac{1}{p}}\notag\\
&\leq  C_m
\left(\int_{\R^3}l(v,\eta)\left\|(w_\be f_0)(\eta)\right\|_{L^\infty_x}^pd\eta\right)^{\frac{1}{p}}.
\end{align*}
In the last inequality above, we use the fact that $\frac{1}{\nu(v)}\left|\frac{\nu(v)}{(1+|v|)^2}\right|^{\frac{1}{p'}}=(1+|v|)^{\frac{2-\ga}{p}-2}\leq C$ since $p>  (2-\ga)/2$ from our assumption. Then similar as \eqref{I1}, taking $\|\cdot\|_{L^p_vL^{\infty}_{T}L^\infty_x}$, we have
\begin{align}
\left\| J_{20} \right\|_{L^p_vL^{\infty}_{T}L^\infty_x} \leq C_m\left\| w_\be f_0\right\|_{L^p_vL^{\infty}_{x}}.\label{J20final}
\end{align}
$J_{21}(t,x,v)$ can be estimated in such way. Denote 
 $\eta'=\eta+\left[(\eta_*-\eta)\cdot \omega \right]\omega$, $\eta_*'=\eta_*-\left[(\eta_*-\eta)\cdot \omega \right]\omega,$ and recall from \eqref{tildeJ} that
\begin{align*}
\tilde{J}(\eta)=&\left(  \int_{\R^3}\int_{\S^2}e^{-\frac{|\eta_*'|^2}{4}}\|f(\eta')\|_{L^{\infty}_{T}L^{\infty}_{x}}^p d\omega du  \right)^{\frac{1}{p}}+\left(  \int_{\R^3}\int_{\S^2}e^{-\frac{|\eta'|^2}{4}}\|f(\eta_*')\|_{L^{\infty}_{T}L^{\infty}_{x}}^p d\omega du  \right)^{\frac{1}{p}}\notag\\
&+ \left(  \int_{\R^3}\int_{\S^2}e^{-\frac{|\eta|^2}{4}}\|f(\eta_*)\|_{L^{\infty}_{T}L^{\infty}_{x}}^p d\omega du  \right)^{\frac{1}{p}}.
\end{align*}
By our assumption $p>3/(3+\ga)$, using Lemma \eqref{ProKm}, we obtain the following pointwise bound of $J_{21}(t,x,v)$,

\begin{align*}
\dis J_{21}(t,x,v)&\leq \int^t_0 e^{-\nu(v)(t-s)}\int_{\R^3}\left|l_{w_\be}(v,\eta) \right|\int^s_0e^{-\nu(\eta)(s-s_1)}\left|\left(w_\be K^mf\right)(s_1,x_1-\eta(s-s_1),\eta)\right|ds_1d\eta ds\notag\\
&\leq Cm^{\ga+\frac{3}{p'}}
 \int^t_0 e^{-\nu(v)(t-s)}\int_{\R^3}\left|l_{w_\be}(v,\eta) \right|e^{-\frac{|\eta|^2}{10}}\int^s_0e^{-\nu(\eta)(s-s_1)}ds_1\tilde{J}(\eta)d\eta ds\notag\\
&\leq Cm^{\ga+\frac{3}{p'}}
 \int^t_0 e^{-\nu(v)(t-s)} ds\int_{\R^3}\left|l_{w_\be}(v,\eta) \right|\tilde{J}(\eta)d\eta.
\end{align*}
By similar arguments in \eqref{estiJ20}, it holds that
\begin{align}
\dis J_{21}(t,x,v)&\leq Cm^{\ga+\frac{3}{p'}}\frac{1}{\nu(v)}\left(\int_{\R^3}l_{w_\be}(v,\eta)d\eta\right)^\frac{1}{p'}
\left(\int_{\R^3}l_{w_\be}(v,\eta)\left|\tilde{J}(\eta)\right|^pd\eta\right)^\frac{1}{p}\notag\\
&\leq Cm^{\ga+\frac{3}{p'}}\frac{1}{\nu(v)}
\left|\frac{\nu(v)}{(1+|v|)^2}\right|^{\frac{1}{p'}}\left(\int_{\R^3}l_{w_\be}(v,\eta)\left|\tilde{J}(\eta)\right|^pd\eta\right)^\frac{1}{p}\notag\\
&\leq Cm^{\ga+\frac{3}{p'}}
\left(\int_{\R^3}l_{w_\be}(v,\eta)\left|\tilde{J}(\eta)\right|^pd\eta\right)^\frac{1}{p}.\label{J21}
\end{align}
Then recalling $\|\tilde{J}\|_{L^p_v}\leq C \left\| w_\be f\right\|_{L^p_vL^{\infty}_{T}L^\infty_x}$ in \eqref{J1}, it follows from \eqref{J21} that
\begin{align}
\left\| J_{21} \right\|_{L^p_vL^{\infty}_{T}L^\infty_x} \leq  Cm^{\ga+\frac{3}{p'}}\left\| w_\be f\right\|_{L^p_vL^{\infty}_{T}L^\infty_x}\label{J21final}
\end{align}
We turn to $J_{23}$ now, similar as above, 
\begin{align*}
\dis J_{23}(t,x,v)&\leq \int^t_0 e^{-\nu(v)(t-s)}\int_{\R^3}\left|l_{w_\be}(v,\eta) \right|\int^s_0e^{-\nu(\eta)(s-s_1)}\left|\left(w_\be \Ga(f,f)\right)(s_1,x_1-\eta(s-s_1),\eta)\right|ds_1d\eta ds\notag\\
&\leq C\frac{1}{\nu(v)}\left|\frac{\nu(v)}{(1+|v|)^2}\right|^{\frac{1}{p'}}
\left[\int_{\R^3}l_{w_{\be}}(v,\eta)\left\|\left(w_{\be-\ga}\Ga\left(f,f \right)\right)(\eta)\right\|_{L^{\infty}_{T}L^\infty_x}d\eta\right]^\frac{1}{p}\notag\\
&\leq C
\left[\int_{\R^3}l_{w_{\be}}(v,\eta)\left\|\left(w_{\be-\ga}\Ga\left(f,f\right)\right)(\eta)\right\|_{L^{\infty}_{T}L^\infty_x}d\eta\right]^\frac{1}{p}.
\end{align*}
Then by our estimate \eqref{Ga-} and \eqref{Ga+}, taking the $L^p_vL^{\infty}_{T}L^\infty_x$ norm, we obtain
\begin{align}
\dis \left\| J_{23} \right\|_{L^p_vL^{\infty}_{T}L^\infty_x}\leq C&\left\| w_\be f_0\right\|_{L^p_vL^\infty_x}^2+C  \left\| w_\be f\right\|^{1+\frac{p(q-1)}{q(p-1)}}_{L^p_vL^{\infty}_{T}L^{\infty}_{x}}\left\|f\right\|^{\frac{p-q}{q(p-1)}}_{L^{\infty}_{T_1,T}L^{\infty}_{x}L^1_v}\notag\\
&+  C\left\| w_\be f\right\|^{\frac{1}{8}\left(\frac{1}{q}-\frac{1}{p}\right)+1+\frac{r}{p}}_{L^p_vL^{\infty}_{T}L^{\infty}_{x}} \left\|f\right\|^{\frac{1}{8}\left(\frac{1}{q}-\frac{1}{p}\right)}_{L^{\infty}_{T_1,T}L^{\infty}_{x}L^1_v}\label{J23final}
\end{align}
At last we consider $J_{22}(t,x,v)$. Recall
\begin{align*}
J_{22}(t,x,v)&=\int^t_0 e^{-\nu(v)(t-s)}\int_{\R^3}\int_{\R^3}\left|l_{w_\be}(v,\eta)l_{w_\be}(\eta,\xi)\right|\notag\\
&\quad\times \int^s_0\left|e^{-\nu(\eta)(s-s_1)}(w_\be f)(s_1,x_1-\eta(s-s_1),\xi)\right|ds_1 d\eta d\xi ds.
\end{align*}
 We divide $J_{22}(t,x,v)$ into four cases as \cite{DHWY,Guo}.

\medskip
\noindent{\it Case 1. } $|v|\geq N$. A direct calculation shows that
\begin{align*}
\dis J_{22}(t,x,v)&\leq\int^t_0 e^{-\nu(v)(t-s)}\int_{\R^3}\int_{\R^3}\left|l_{w_\be}(v,\eta)l_{w_\be}(\eta,\xi)\right|\notag\\
&\quad\times \int^s_0\left|e^{-\nu(\eta)(s-s_1)}(w_\be f)(s_1,x_1-\eta(s-s_1),\xi)\right|d\eta d\xi ds_1 ds\notag\\
&\leq\int^t_0 e^{-\nu(v)(t-s)}\int_{\R^3}\int_{\R^3}\left|l_{w_\be}(v,\eta)l_{w_\be}(\eta,\xi)\right|\notag\\
&\quad\times \int^s_0e^{-\nu(\eta)(s-s_1)}ds_1 \left\|(w_\be f)(\xi)\right\|_{L^{\infty}_{T}L^\infty_x}d\eta d\xi ds. 
\end{align*}
We first integrate with respect to $s_1$ first, then integrate with respect to $s$.
\begin{align}\label{EstiJ221st}
\dis J_{22}(t,x,v)&\leq \int^t_0 e^{-\nu(v)(t-s)}ds\int_{\R^3}\int_{\R^3}\left|l_{w_\be}(v,\eta)l_{w_\be}(\eta,\xi)\right|\frac{1}{\nu(\eta)}\left\|(w_\be f)(\xi)\right\|_{L^{\infty}_{T}L^\infty_x} d\eta d\xi \notag\\
&\leq \frac{1}{\nu(v)}\int_{\R^3}\int_{\R^3}\left|l_{w_\be}(v,\eta)l_{w_\be}(\eta,\xi)\right|\frac{1}{\nu(\eta)}\left\|(w_\be f)(\xi)\right\|_{L^{\infty}_{T}L^\infty_x} d\eta d\xi.
\end{align}
Recalling $l_{w_{\be p'}}(v,\eta)=l(\eta,\xi)\left|\frac{w_\be(\eta)}{w_\be(\xi)} \right|^{p'}$ from the notation we define in \eqref{notations}, then by Lemma \ref{Kc} and H\"older's inequality, one obtains that
\begin{align}
\dis J_{22}(t,x,v)&\leq\frac{1}{\nu(v)}\left(\int_{\R^3}\int_{\R^3}l(v,\eta)\left|\frac{w_\be(v)}{w_\be(\eta)} \right|^{p'}l(\eta,\xi)\left|\frac{w_\be(\eta)}{w_\be(\xi)} \right|^{p'}\left|\frac{1}{\nu(\eta)}\right|d\xi d\eta \right)^{\frac{1}{p'}}\notag\\
&\quad\times
\left(\int_{\R^3}\int_{\R^3}l(v,\eta)l(\eta,\xi)\left|\frac{1}{\nu(\eta)}\right|\left|(w_\be f)(\xi)\right|^p_\infty d\eta d\xi \right)^{\frac{1}{p}} \notag\\
&\leq\frac{C_m}{\nu(v)}\left(\int_{\R^3}l_{w_{\be p'}}(v,\eta)\frac{\nu(\eta)}{(1+|\eta|)^2} \left|\frac{1}{\nu(\eta)}\right| d\eta \right)^{\frac{1}{p'}}\notag\\
&\quad\times
\left(\int_{\R^3}\int_{\R^3}l(v,\eta)l(\eta,\xi)\left|\frac{1}{\nu(\eta)}\right|\left\|(w_\be f)(\xi)\right\|^p_{L^{\infty}_{T}L^\infty_x} d\eta d\xi \right)^{\frac{1}{p}}.\label{J22v}
\end{align}
Since $p>(2-\ga)/2$ and $|v|\geq N$, then $\frac{2}{p'}+\frac{\ga}{p}>0$ and 
\begin{align*}
&\frac{C_m}{\nu(v)}\left(\int_{\R^3}l_{w_{\be p'}}(v,\eta)\frac{\nu(\eta)}{(1+|\eta|)^2} \left|\frac{1}{\nu(\eta)}\right| d\eta \right)^{\frac{1}{p'}} \notag\\
&\leq \frac{C_m}{\nu(v)}\left(\int_{\R^3}l_{w_{\be p'}}(v,\eta)d\eta \right)^{\frac{1}{p'}} \notag\\
&\leq C_m\frac{1}{\nu(v)}\left|\frac{\nu(v)}{(1+|v|)^2}\right|^\frac{1}{p'} \notag\\
&\leq C_m\frac{1}{(1+|v|)^{\frac{2}{p'}+\frac{\ga}{p}}} \notag\\
&\leq\frac{C_m}{N^{\frac{2}{p'}+\frac{\ga}{p}}}.
\end{align*}
Substituting the above inequality into \eqref{J22v}, it holds that
\begin{align*}
\dis J_{22}(t,x,v)&\leq\frac{C_m}{N^{\frac{2}{p'}+\frac{\ga}{p}}}
\left(\int_{\R^3}\int_{\R^3}l(v,\eta)l(\eta,\xi)\left|\frac{1}{\nu(\eta)}\right|\left\|(w_\be f)(\xi)\right\|^p_{L^{\infty}_{T}L^\infty_x} d\eta d\xi \right)^{\frac{1}{p}},
\end{align*}
which yields that 
\begin{align}
\dis \left\| J_{22} \right\|_{L^p_vL^{\infty}_{T}L^\infty_x}&\leq \frac{C_m}{N^{\frac{2}{p'}+\frac{\ga}{p}}}\left(\int_{\R^3}\int_{\R^3}\int_{\R^3}l(v,\eta)dv\,l(\eta,\xi)\left|\frac{1}{\nu(\eta)}\right|\left\|(w_\be f)(\xi)\right\|^p_{L^{\infty}_{T}L^\infty_x} d\eta d\xi \right)^{\frac{1}{p}}\notag\\
&\leq \frac{C_m}{N^{\frac{2}{p'}+\frac{\ga}{p}}}\left(\int_{\R^3}\int_{\R^3}l(\eta,\xi)\frac{\nu(\eta)}{(1+|\eta|)^2}\left|\frac{1}{\nu(\eta)}\right|\left\|(w_\be f)(\xi)\right\|^p_{L^{\infty}_{T}L^\infty_x} d\eta d\xi \right)^{\frac{1}{p}}\notag\\
&\leq \frac{C_m}{N^{\frac{2}{p'}+\frac{\ga}{p}}}\left(\int_{\R^3}\int_{\R^3}l(\eta,\xi)\left\|(w_\be f)(\xi)\right\|^p_{L^{\infty}_{T}L^\infty_x} d\eta d\xi \right)^{\frac{1}{p}}\notag\\
&\leq \frac{C_m}{N^{\frac{2}{p'}+\frac{\ga}{p}}}\left\| w_\be f\right\|_{L^p_vL^{\infty}_{T}L^\infty_x}
.\label{J22C1}
\end{align}

\medskip
\noindent{\it Case 2. } $|v|\leq N$, $|\eta|\geq 2N$ or $|\eta|\leq2N$, $|\xi|\geq3N$.
Then either $|\eta-v|\geq N$ or $|\xi-\eta|\geq N$, again by \eqref{Prol4} and \eqref{EstiJ221st}, similar as \eqref{J22v}, when $|\eta-v|\geq N$, we have
\begin{align}\label{EstiJ222nd}
\dis J_{22}(t,x,v)&\leq\frac{1}{\nu(v)}\int_{\R^3}\int_{\R^3}\left|l_{w_{\be p'}}(v,\eta)l_{w_{\be p'}}(\eta,\xi)\right|\frac{1}{\nu(\eta)}\left\|(w_\be f)(\xi)\right\|_{L^{\infty}_{T}L^\infty_x} d\eta d\xi \notag\\
&\leq\frac{1}{\nu(v)}\left(\int_{\R^3}\int_{\R^3}l_{w_{\be p'}}(v,\eta)e^{\frac{|v-\eta|^2}{20}}e^{-\frac{|v-\eta|^2}{20}}l_{w_{\be p'}}(\eta,\xi)\left|\frac{1}{\nu(\eta)}\right|d\eta d\xi \right)^{\frac{1}{p'}}\notag\\
&\quad\times
\left(\int_{\R^3}\int_{\R^3}l(v,\eta)l(\eta,\xi)\left|\frac{1}{\nu(\eta)}\right|\left\|(w_\be f)(\xi)\right\|^p_{L^{\infty}_{T}L^\infty_x} d\eta d\xi \right)^{\frac{1}{p}} \notag\\
&\leq C_m e^{-\frac{N^2}{20}}
\left(\int_{\R^3}\int_{\R^3}l(v,\eta)l(\eta,\xi)\left|\frac{1}{\nu(\eta)}\right|\left\|(w_\be f)(\xi)\right\|^p_{L^{\infty}_{T}L^\infty_x} d\eta d\xi \right)^{\frac{1}{p}}.
\end{align}
The case when $|\xi-\eta|\geq N$ can be estimated in the same way. Then we obtain that
\begin{align}
\dis \left\| J_{22} \right\|_{L^p_vL^{\infty}_{T}L^\infty_x}\leq C_m e^{-\frac{N^2}{20}}\left\| w_\be f\right\|_{L^p_vL^{\infty}_{T}L^\infty_x}.\label{J22C2}
\end{align}

\medskip
\noindent{\it Case 3. } $|v|\leq N$, $|\eta|\leq2N$, $|\xi|\leq3N$, $s-s_1\leq\la$.  Since $e^{-\nu(\eta)(s-s_1)}\leq1$ and $\int^t_0 e^{-\nu(v)(t-s)}ds\leq \frac{1}{\nu(v)}$, one has that
\begin{align}
\dis J_{22}(t,x,v)&=\int^t_0 e^{-\nu(v)(t-s)}\int_{\R^3}\int_{\R^3}\left|l_{w_\be}(v,\eta)l_{w_\be}(\eta,\xi)\right|\notag\\
&\quad\times \int^s_{s-\la}\left|e^{-\nu(\eta)(s-s_1)}(w_\be f)(s_1,x_1-\eta(s-s_1),\xi)\right|ds_1 d\eta d\xi ds\notag\\
&\leq\la\frac{1}{\nu(v)}\int_{\R^3}\int_{\R^3}\left|l_{w_\be}(v,\eta)l_{w_\be}(\eta,\xi)\right|\left\|(w_\be f)(\xi)\right\|_{L^{\infty}_{T}L^\infty_x}d\eta d\xi \notag\\
&\leq C_{m,N} \la
\left(\int_{\R^3}\int_{\R^3}l(v,\eta)l(\eta,\xi)\left\|(w_\be f)(\xi)\right\|^p_{L^{\infty}_{T}L^\infty_x} d\eta d\xi \right)^{\frac{1}{p}},\label{J22trick}
\end{align}
which yields
\begin{align}
\dis \left\| J_{22} \right\|_{L^p_vL^{\infty}_{T}L^\infty_x}\leq C_{m,N} \la\left\| w_\be f\right\|_{L^p_vL^{\infty}_{T}L^\infty_x}.\label{J22C3}
\end{align}

\medskip
\noindent{\it Case 4. } $|v|\leq N$, $|\eta|\leq2N$, $|\xi|\leq3N$, $s-s_1\geq\la$. 

Recall from \eqref{Estionl} that
\begin{align*}
\left| l(v,\eta) \right|\leq\frac{C_\ga}{|v-\eta|^{\frac{3-\ga}{2}}}e^{-\frac{|v-\eta|^2}{10}}e^{-\frac{||v|^2-|\eta|^2|^2}{16|v-\eta|^2}}+C|v-\eta|^\ga e^{-\frac{|v|^2}{4}} e^{-\frac{|\eta|^2}{4}}.
\end{align*}
Since $p>3/(3+\ga)$, $p'\ga>-3$ and $\frac{3-\ga}{2}p'<3$, then $\sup_{v\in \R^3}\left|\int_{\R^3}\left| l_{w_\be}(v,\eta) \right|^{p'}d\eta \right|^\frac{1}{p'} < \infty$.

We can approximate $l_{w_\be}$ by a smooth function $l_N$ with compact support such that
\begin{align}\label{app}
\sup_{|v|\leq3N}\left|\int_{|\eta|\leq3N}\left| l_{w_\be}(v,\eta)-l_N(v,\eta) \right|^{p'}d\eta \right|^\frac{1}{p'} \leq \frac{C_m}{N^{10}}.
\end{align}
We can rewrite $l_{w_\be}(v,\eta)l_{w_\be}(\eta,\xi)=\left(l_{w_\be}(v,\eta)-l_{N}(v,\eta)\right)l_{w_\be}(\eta,\xi)+\left(l_{w_\be}(\eta,\xi)-l_{N}(\eta,\xi)\right)l_{N}(v,\eta)+l_{N}(v,\eta)l_{N}(\eta,\xi)$. A direct calculation shows that 
\begin{align*}
\dis J_{22}(t,x,v)&\leq\int^t_0 e^{-\nu(v)(t-s)}\iint_{|\eta|\leq2N,|\xi|\leq3N}\left|l_{w_\be}(v,\eta)l_{w_\be}(\eta,\xi)\right|\notag\\
&\quad\times \int^{s-\la}_0\left|e^{-\nu(\eta)(s-s_1)}(w_\be f)(s_1,x_1-\eta(s-s_1),\xi)\right|ds_1d\eta d\xi  ds.
\end{align*}
Splitting the right-hand side above into three parts, we have
\begin{align}
\dis J_{22}(t,x,v)&\leq\int^t_0 e^{-\nu(v)(t-s)}\int_{\R^3}\int_{\R^3}\left|l_{w_\be}(v,\eta)-l_{N}(v,\eta)\right|\left|l_{w_\be}(\eta,\xi)\right|\notag\\
&\quad\times \int^{s-\la}_0e^{-\nu(\eta)(s-s_1)}ds_1\left\|(w_\be f)(\xi)\right\|_{L^{\infty}_{T}L^\infty_x}d\eta d\xi ds \notag\\
&+\int^t_0 e^{-\nu(v)(t-s)}\int_{\R^3}\int_{\R^3}\left|l_{w_\be}(\eta,\xi)-l_{N}(\eta,\xi)\right|\left|l_{N}(v,\eta)\right|\notag\\
&\quad\times \int^{s-\la}_0e^{-\nu(\eta)(s-s_1)}ds_1\left\|(w_\be f)(\xi)\right\|_{L^{\infty}_{T}L^\infty_x}d\eta d\xi  ds \notag\\
&+\int^t_0 e^{-\nu(v)(t-s)}\iint_{|\eta|\leq2N,|\xi|\leq3N}\left|l_{N}(v,\eta)l_{N}(\eta,\xi)\right|\notag\\
&\quad\times \int^{s-\la}_0e^{-\nu(\eta)(s-s_1)}\left|(w_\be f)(s_1,x_1-\eta(s-s_1),\xi)\right|ds_1d\eta d\xi  ds.\label{J223}
\end{align}
For the first two terms on the right-hand side of \eqref{J223}, we first integrate with respect to $s_1$, then integrate with respect to $s$ to get
\begin{align}
\dis J_{22}(t,x,v)&\leq\frac{1}{\nu(v)}\iint_{|\eta|\leq2N,|\xi|\leq3N}\left|l_{w_\be}(v,\eta)-l_{N}(v,\eta)\right|\left|l_{w_\be}(\eta,\xi)\right|\frac{1}{\nu(\eta)}\left\|(w_\be f)(\xi)\right\|_{L^{\infty}_{T}L^\infty_x} d\eta d\xi \notag\\
&\quad+\frac{1}{\nu(v)}\iint_{|\eta|\leq2N,|\xi|\leq3N}\left|l_{w_\be}(\eta,\xi)-l_{N}(\eta,\xi)\right|\left|l_{N}(v,\eta)\right|\frac{1}{\nu(\eta)}\left\|(w_\be f)(\xi)\right\|_{L^{\infty}_{T}L^\infty_x} d\eta d\xi\notag\\
&\quad+\int^t_0 e^{-\nu(v)(t-s)}\iint_{|\eta|\leq2N,|\xi|\leq3N}\left|l_{N}(v,\eta)l_{N}(\eta,\xi)\right|\notag\\
&\quad\times \int^{s-\la}_0\left|e^{-\nu(\eta)(s-s_1)}(w_\be f)(s_1,x_1-\eta(s-s_1),\xi)\right|ds_1d\eta d\xi  ds\notag\\
&=J_{221}(t,x,v)+J_{222}(t,x,v)+J_{223}(t,x,v).\label{J22ab}
\end{align}
By the approximation \eqref{app} and the fact that 
$$
\frac{1}{\nu(v)}\frac{1}{\nu(\eta)}\leq CN^6
$$
for $|v|\leq N$, $|\eta|\leq2N$, we yield our estimate for the first term on the right-hand side of \eqref{J22ab},
\begin{align}
\dis J_{221}(t,x,v)=&\frac{1}{\nu(v)}\iint_{|\eta|\leq2N,|\xi|\leq3N}\left|l_{w_\be}(v,\eta)-l_{N}(v,\eta)\right|\left|l_{w_\be}(\eta,\xi)\right|\frac{1}{\nu(\eta)}\left\|(w_\be f)(\xi)\right\|_{L^{\infty}_{T}L^\infty_x}d\eta d\xi \notag\\
& \leq CN^6\iint_{|\eta|\leq2N,|\xi|\leq3N}\left|l_{w_\be}(v,\eta)-l_{N}(v,\eta)\right|\left|l_{w_\be}(\eta,\xi)\right|\left\|(w_\be f)(\xi)\right\|_{L^{\infty}_{T}L^\infty_x}d\eta d\xi\notag\\
& \leq CN^6\left(\int_{|\eta|\leq2N}\int_{|\xi|\leq3N}\left|l_{w_\be}(\eta,\xi)\right|^{p'}d\xi \left|l_{w_\be}(v,\eta)-l_{N}(v,\eta)\right|^{p'}d\eta \right)^\frac{1}{p'}\notag\\
&\quad\times \left( \int_{|\xi|\leq3N}\int_{|\eta|\leq2N}\left\|(w_\be f)(\xi)\right\|^p_{L^{\infty}_{T}L^\infty_x} d\eta d\xi\right)^\frac{1}{p} \notag\\
& \leq CN^9\left(\int_{|\eta|\leq2N}\left|l_{w_\be}(v,\eta)-l_{N}(v,\eta)\right|^{p'}d\eta \right)^\frac{1}{p'}\left( \int_{|\xi|\leq3N}\left\|(w_\be f)(\xi)\right\|^p_{L^{\infty}_{T}L^\infty_x}  d\xi\right)^\frac{1}{p} \notag\\
& \leq \frac{C_m}{N}\left\| w_\be f\right\|_{L^p_vL^{\infty}_{T}L^\infty_x}.\label{JJ221}
\end{align}
Similarly we have
\begin{align}
\dis J_{222}(t,x,v)\leq \frac{C_m}{N}\left\| w_\be f\right\|_{L^p_vL^{\infty}_{T}L^\infty_x}.\label{JJ222}
\end{align}
We turn to $J_{223}$ now, denoting $\nu_N=\inf_{|v|\leq 3N}|\nu(v)|>0$, it holds that
\begin{align}
\dis J_{223}&=\int^t_0 e^{-\nu(v)(t-s)}\iint_{|\eta|\leq2N,|\xi|\leq3N}\left|l_{N}(v,\eta)l_{N}(\eta,\xi)\right|\notag\\
&\quad\times \int^{s-\la}_0\left|e^{-\nu(\eta)(s-s_1)}(w_\be f)(s_1,x_1-\eta(s-s_1),\xi)\right|ds_1d\eta d\xi  ds\notag\\
&\leq\int^t_0 e^{-\nu_N(t-s)}\int^{s-\la}_0e^{-\nu_N(s-s_1)}\iint_{|\eta|\leq2N,|\xi|\leq3N}\left|l_{N}(v,\eta)l_{N}(\eta,\xi)\right|\notag\\
&\quad\times \left|(w_\be f)(s_1,x_1-\eta(s-s_1),\xi)\right|ds_1d\eta d\xi ds \notag\\
&\leq C_{m,N}\int^t_0 e^{-\nu_N(t-s)}\int^{s-\la}_0e^{-\nu_N(s-s_1)}\notag\\
&\quad\times \iint_{|\eta|\leq2N,|\xi|\leq3N}\left|(w_\be f)(s_1,x_1-\eta(s-s_1),\xi)\right|ds_1d\eta d\xi ds .\label{J223in}
\end{align}
We are able to control $J_{223}$ by $\left( \la^{-\frac{3}{2}}\sqrt{\CE(F_0)}+ \la^{-3}\CE(F_0)\right)$ in the following way,
\begin{align}
&\iint_{|\eta|\leq2N, |\xi|\leq3N}\left|(w_\be f)(s_1,x_1-\eta(s-s_1),\xi)\right| d\eta d\xi\notag\\
&\leq C_N\iint_{|\eta|\leq2N, |\xi|\leq3N}\left(\frac{\left|F-\mu \right|}{\sqrt{\mu}}\right)(s_1,x_1-\eta(s-s_1),\xi)\chi_{\{|F(s_1,x_1-\eta(s-s_1),\xi)-\mu(\xi)|\leq \mu(\xi)\}} d\eta d\xi\notag\\
&\quad+ C_N\iint_{|\eta|\leq2N, |\xi|\leq3N}\left|(F-\mu) (s_1,x_1-\eta(s-s_1),\xi)\right|\chi_{\{|F(s_1,x_1-\eta(s-s_1),\xi)-\mu(\xi)|\geq \mu(\xi)\}} d\eta d\xi\notag\\
&\leq C_N\frac{1+(s-s_1)^\frac{3}{2}}{(s-s_1)^\frac{3}{2}}\left\{\int_{\Omega}\int_{ |\xi|\leq3N}\left(\frac{\left|F-\mu \right|^2}{\mu}\right)(s_1,y,\xi)\chi_{\{|F(s_1,y,\xi)-\mu(\xi)|\leq \mu(\xi)\}} dy d\xi\right\}^\frac{1}{2}\notag\\
&\quad+ C_N\frac{1+(s-s_1)^3}{(s-s_1)^3}\int_{\Omega}\int_{ |\xi|\leq3N}\left|(F-\mu) (s_1,y,\xi)\right|\chi_{\{|F(s_1,y,\xi)-\mu(\xi)|\geq \mu(\xi)\}} dyd\xi. \label{UseEntropy}
\end{align}
In the last step above we use the transformation $y=x_1-\eta(s-s_1)$. Substituting \eqref{UseEntropy} into \eqref{J223in} and using Lemma \ref{Taylor}, we obtain
\begin{align}
\dis J_{223}&\leq C_{m,N}\left( \la^{-\frac{3}{2}}\sqrt{\CE(F_0)}+ \la^{-3}\CE(F_0)\right)\label{JJ223}
\end{align}
Then by \eqref{JJ221}, \eqref{JJ222}, \eqref{JJ223}, one has
\begin{align}
\dis \left\| J_{22} \right\|_{L^p_vL^{\infty}_{T}L^\infty_x}\leq \frac{C_m}{N}\left\| w_\be f\right\|_{L^p_vL^{\infty}_{T}L^\infty_x}+C_{m,N}\left( \la^{-\frac{3}{2}}\sqrt{\CE(F_0)}+ \la^{-3}\CE(F_0)\right).\label{J22C4}
\end{align}
In summary of all the four cases, by \eqref{J22C1}, \eqref{J22C2}, \eqref{J22C3} and \eqref{J22C4}, we obtain
\begin{align}
\dis \left\| J_{22} \right\|_{L^p_vL^{\infty}_{T}L^\infty_x}\leq\left(C_{m,N}\la+\frac{C_m}{N^{\frac{2}{p'}+\frac{\ga}{p}}} \right)\left\| w_\be f\right\|_{L^p_vL^{\infty}_{T}L^\infty_x}+
C_{m,N}\left( \la^{-\frac{3}{2}}\sqrt{\CE(F_0)}+ \la^{-3}\CE(F_0)\right).\label{J22final}
\end{align}
Combining our estimates on $J_{20}$ \eqref{J20final}, $J_{21}$ \eqref{J21final}, $J_{22}$ \eqref{J22final}, $J_{23}$ \eqref{J23final}, one gets that
\begin{align}\label{J_2}
\left\| J_2 \right\|_{L^p_vL^{\infty}_{T}L^\infty_x}&\leq  C_m \left\| w_\be f_0\right\|_{L^p_vL^{\infty}_{x}}+\left(Cm^{\ga+\frac{3}{p'}}+C_{m,N}\la+\frac{C_m}{N^{\frac{2}{p'}+\frac{\ga}{p}}}+\frac{C_m}{N} \right)\left\| w_\be f\right\|_{L^p_vL^{\infty}_{T}L^\infty_x}\notag\\
&+C  \left\{
 \left\| w_\be f\right\|^{1+\frac{p(q-1)}{q(p-1)}}_{L^p_vL^{\infty}_{T}L^{\infty}_{x}}\left\|f\right\|^{\frac{p-q}{q(p-1)}}_{L^{\infty}_{T_1,T}L^{\infty}_{x}L^1_v}+  \left\| w_\be f\right\|^{\frac{1}{8}\left(\frac{1}{q}-\frac{1}{p}\right)+1+\frac{r}{p}}_{L^p_vL^{\infty}_{T}L^{\infty}_{x}} \left\|f\right\|^{\frac{1}{8}\left(\frac{1}{q}-\frac{1}{p}\right)}_{L^{\infty}_{T_1,T}L^{\infty}_{x}L^1_v} \right\}\notag\\
&+C\left\| w_\be f_0\right\|_{L^p_vL^{\infty}_{x}}^2+
C_{m,N}\left( \la^{-\frac{3}{2}}\sqrt{\CE(F_0)}+ \la^{-3}\CE(F_0)\right).
\end{align}
It follows from \eqref{rrmild}, \eqref{J0J1J3}, \eqref{J_2} that
\begin{align*}
\left\| w_\be f\right\|_{L^p_vL^{\infty}_{T}L^\infty_x}&\leq  C_m \left\| w_\be f_0\right\|_{L^p_vL^{\infty}_{x}}+\left(Cm^{\ga+\frac{3}{p'}}+C_{m,N}\la+\frac{C_m}{N^{\frac{2}{p'}+\frac{\ga}{p}}}+\frac{C_m}{N} \right)\left\| w_\be f\right\|_{L^p_vL^{\infty}_{T}L^\infty_x}\notag\\
&+C  \left\{
 \left\|f\right\|^{\frac{p-q}{q(p-1)}}_{L^{\infty}_{T}L^\infty_xL^1_v} \left\| w_\be f\right\|^{1+\frac{p(q-1)}{q(p-1)}}_{L^p_vL^{\infty}_{T}L^\infty_x}+  \left\|f\right\|^{\frac{1}{8}\left(\frac{1}{q}-\frac{1}{p}\right)}_{L^{\infty}_{T}L^\infty_xL^1_v} \left\| w_\be f\right\|^{\frac{1}{8}\left(\frac{1}{q}-\frac{1}{p}\right)+1+\frac{r}{p}}_{L^p_vL^{\infty}_{T}L^\infty_x} \right\}\notag\\
&+C\left\| w_\be f_0\right\|_{L^p_vL^{\infty}_{x}}^2+
C_{m,N}\left( \la^{-\frac{3}{2}}\sqrt{\CE(F_0)}+ \la^{-3}\CE(F_0)\right).
\end{align*}
Finally \eqref{LemmaEstimate} holds by first choosing small $m$, then choosing small $\la$ and large $N$.
\end{proof}

\subsection{Smallness of $\|f\|_{L^{\infty}_{T_1,T}L^{\infty}_{x}L^1_v}$}We also need the following lemma, which implies that no matter how large $\left\| w_\be f_0\right\|_{L^p_vL^{\infty}_{x}}$ is, we can choose very small $\CE(F_0),\,\|w_\be f_0\|_{L_x^1L^\infty_v}$ such that $\|f\|_{L^{\infty}_{T_1,T}L^{\infty}_{x}L^1_v}$ will be small.

\begin{lemma}\label{L1v}
Let $\ga$, $\be$ and $p$ satiesfy the assumption in Theorem \ref{local}, $3/(3+\ga)< q< p$, and $T_1$ is the constant given in Theorem \ref{local}. Then for any $T>T_1$, it holds that
\begin{align}\label{LemmaL1v}
\int_{\R^3}\left|f(t,x,v)\right|dv\leq& \int_{\R^3}e^{-\nu(v)t}\left|f_0(x-vt,v)\right|dv+\left(Cm^{\ga+\frac{3}{p'}}+C \la+\frac{C_m}{N}\right) \left\| w_\be f\right\|_{L^p_vL^{\infty}_{T}L^{\infty}_{x}}\notag\\
&+C\left( \la+\frac{1}{N^{{\frac{\be}{2}}-3}} \right)\left\| w_\be f\right\|_{L^p_vL^{\infty}_{T}L^{\infty}_{x}}^2\notag\\
&+C_N\left(\la^{-\frac{3}{2}}\sqrt{\CE(F_0)}+ \la^{-3}\CE(F_0) \right)^{\frac{1}{p'}}\left\| w_\be f\right\|_{L^p_vL^{\infty}_{T}L^{\infty}_{x}}^{1+\frac{1}{p}+\frac{2p(q-1)}{q(p-1)}}\notag\\
&+C_N\left( \la^{-\frac{3}{2}}\sqrt{\CE(F_0)}+ \la^{-3}\CE(F_0)  \right)^{\frac{1}{4}\left(\frac{1}{q}-\frac{1}{p} \right)}
\left\| w_\be f\right\|_{L^p_vL^{\infty}_{T}L^{\infty}_{x}}^{1+\frac{r}{p}},
\end{align}
for any $(t,x)\in[T_1,T]\times \Omega$, where $r=p-\frac{p-q}{4q}$.
\end{lemma}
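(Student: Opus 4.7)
The strategy parallels that of Lemma~\ref{Estimate}, but with the outer norm replaced by $L^{\infty}_{T_1,T}L^{\infty}_{x}L^1_v$. Starting from the split mild form~\eqref{rmild} with $K=K^m+K^c$, at each fixed $(t,x)\in[T_1,T]\times\Omega$ I take $|f(t,x,v)|$ and integrate in $v$, obtaining four contributions: the free-transport piece (which appears as the first term on the right of~\eqref{LemmaL1v}) and pieces coming from $K^mf$, $K^cf$ and $\Ga(f,f)$. Each of the latter three is treated via H\"older in $v$, the time estimate $\int_0^t e^{-\nu(v)(t-s)}\,ds\leq 1/\nu(v)$, and the kernel/collision bounds gathered above.

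For the $K^m$ piece, Lemma~\ref{ProKm} gives $|K^mf(v)|\leq Cm^{\ga+3/p'}e^{-|v|^2/10}\tilde J(v)$ with $\tilde J$ as in~\eqref{tildeJ}; H\"older in $v$ against the weight $e^{-|v|^2/10}/(\nu(v)w_\be(v))\in L^{p'}_v$ together with $\|\tilde J\|_{L^p_v}\leq C\|w_\be f\|_{L^p_vL^\infty_TL^\infty_x}$ (cf.~\eqref{J1}) delivers the $Cm^{\ga+3/p'}\|w_\be f\|$ contribution to term~2. For the $\Ga$ piece, I bound the time integral by $\|w_{\be-\ga}\Ga(f,f)(v)\|_{L^\infty_TL^\infty_x}/(w_{\be-\ga}(v)\nu(v))$ and use H\"older against $(w_{\be-\ga}\nu)^{-1}\sim(1+|v|)^{-\be}\in L^{p'}_v$, reducing matters to $C\|w_{\be-\ga}\Ga(f,f)\|_{L^p_vL^\infty_TL^\infty_x}$; Lemma~\ref{Gama} then produces bounds of type $\|w_\be f\|^a\|f\|_{L^\infty_{T_1,T}L^\infty_xL^1_v}^b$. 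The $\|f\|_{L^1_v}$ factor is handled two ways: crudely via $\|f\|_{L^1_v}\leq C\|w_\be f\|_{L^p_v}$ (H\"older, using $\be>3/p'$) combined with a velocity cut $|v|\geq N$ producing $N^{-(\be/2-3)}$ and a time cut $s\in[t-\la,t]$ producing $\la$ (which gives the quadratic term~3); and sharply via the entropy decomposition of Lemma~\ref{Taylor} together with the change-of-variables trick described next (which gives terms~4 and~5, whose exponents track the two H\"older structures for $\Ga_-$ and $\Ga_+$ in Lemma~\ref{Gama}).

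The main obstacle is the $K^cf$ piece, which I would treat by the same unfolding as $J_2$ in the proof of Lemma~\ref{Estimate}: substitute~\eqref{rmild} back inside $f(s,x-v(t-s),\eta)$ in $K^cf$, producing a double-kernel expression $l(v,\eta)l(\eta,\xi)$ with a nested time integration, and perform the same four-region decomposition of $(v,\eta,\xi,s-s_1)$-space. Cases~1--3 ($|v|\geq N$; $|v-\eta|\geq N$ or $|\eta-\xi|\geq N$; bounded velocities with $s-s_1\leq\la$) depend only on the decay estimates \eqref{Prol1}--\eqref{Prol4} and produce the $C_m/N^{2/p'+\ga/p}$, $C_me^{-N^2/20}$ and $C_{m,N}\la$ coefficients aggregated in the linear-in-$\|w_\be f\|$ part of term~2. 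The delicate Case~4 (bounded velocities with $s-s_1\geq\la$) requires first a smooth approximation of $l_{w_\be}$ on the compact $(v,\eta)$-region, then the change of variables $y=x_1-\eta(s-s_1)$ with Jacobian $(s-s_1)^{-3}$, which converts the $\eta$-integration into an $L^1_x$-integration on which Lemma~\ref{Taylor} yields the $C_{m,N}(\la^{-3/2}\sqrt{\CE(F_0)}+\la^{-3}\CE(F_0))$ factor. Feeding this $L^1_v$ bound into the $\Ga_-$ and $\Ga_+$ H\"older inequalities of Lemma~\ref{Gama} produces the precise exponents $1/p'$ and $(1/4)(1/q-1/p)$ on the entropy factor and the matching exponents $1+1/p+2p(q-1)/(q(p-1))$ and $1+r/p$ on $\|w_\be f\|$, closing~\eqref{LemmaL1v}.
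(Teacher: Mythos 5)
Your overall skeleton (free transport $+$ $K^m$ $+$ $K^c$ $+$ $\Gamma$, with Lemma \ref{ProKm} for the $K^m$ piece and the time/velocity cuts plus Lemma \ref{Taylor} for the rest) matches the paper, and your treatment of $G_1$ is essentially the paper's. But your plan for the $K^c$ piece contains a genuine gap. The paper does \emph{not} iterate the mild form inside $K^cf$ here: since one is already integrating in $v$, the outer $v$-integration plays the role that the outer kernel played in $J_{22}$, and a \emph{single}-kernel decomposition in $(s,\eta,v)$ suffices (Case 1: $t-\la\le s\le t$; Case 2: $|\eta|\ge N$, where the smallness comes from the weight $1/(\nu(v)w_{\be/2}(v))$ and \eqref{Prol2}; Case 3: $|\eta|\le N$, $|v|\ge 2N$ via \eqref{Prol4}; Case 4: bounded velocities and $s\le t-\la$ via the approximation \eqref{app} and the entropy lemma). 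If you instead substitute \eqref{rmild} back in to get $l(v,\eta)l(\eta,\xi)$, the unfolding also produces a $J_{20}$-type initial-data term
\begin{align*}
\int_0^t\int_{\R^3} e^{-\nu(v)(t-s)}\int_{\R^3}\left|l(v,\eta)\right|e^{-\nu(\eta)s}\left|f_0(x_1-\eta s,\eta)\right|d\eta\, dv\, ds,
\end{align*}
together with nested $K^m$- and $\Gamma$-terms, none of which you address. The initial-data term is of size $C_m\left\|w_\be f_0\right\|_{L^p_vL^{\infty}_x}$ with an $O(1)$ coefficient; it is not present in \eqref{LemmaL1v} and cannot be absorbed, since $\left\|w_\be f_0\right\|_{L^p_vL^{\infty}_x}\leq M$ may be arbitrarily large while the whole point of the lemma is that $\int|f|\,dv$ ends up small for $t\geq T_1$. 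Making that term small would require a separate dispersive argument (and for soft potentials $e^{-\nu(v)(t-s)}$ gives no decay for large $|v|$ near $s=0$), so the iteration is not just unnecessary but actively breaks the stated bound.

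A second, lesser issue is the order of operations in your $\Gamma$ estimate: once you have reduced to $C\left\|w_{\be-\ga}\Gamma(f,f)\right\|_{L^p_vL^{\infty}_TL^{\infty}_x}$ and invoked Lemma \ref{Gama}, the resulting factor $\left\|f\right\|_{L^{\infty}_{T}L^{\infty}_xL^1_v}$ is a sup over the whole slab, and you can no longer retroactively impose the time cut $s\in[t-\la,t]$, the velocity cut, or the entropy decomposition at the specific point $x_1=x-v(t-s)$; moreover such a bound is circular (it controls $\int|f|dv$ by its own sup with an $O(1)$ constant). The paper avoids this by keeping $G_3$ as an explicit integral and performing the three-case decomposition ($t-\la\le s\le t$; $|u|\ge N$ or $|v|\ge N$; the remainder, split into loss and gain and estimated with the interpolation of \eqref{interpo}, the change of variables $y=x_1-v(t-s)$, and Lemma \ref{Taylor}) \emph{inside} the integral, so that every term carries a small coefficient $\la$, $N^{-(\be/2-3)}$, or a power of $\CE(F_0)$. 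Your closing sentence suggests you intend exactly these cuts, but as written the reduction through Lemma \ref{Gama} would have to be undone for them to apply.
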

\begin{proof}
Let $(t,x)\in[T_1,T]\times \Omega$. Using \eqref{rmild}, we have
\begin{equation}
\int_{\R^3}\left|f(t,x,v)\right|dv\leq \int_{\R^3}e^{-\nu(v)t}\left|f_0(x-vt,v)\right|dv+G_1(t,x)+G_2(t,x)+G_3(t,x),\label{L1first}
\end{equation}
where
\begin{align}
&G_1(t,x):=\int_0^t\int_{\R^3} e^{-\nu(v)(t-s)}\left|(K^mf)(s,x-v(t-s),v)\right|dvds \notag\\
&G_2(t,x):=\int_0^t\int_{\R^3} e^{-\nu(v)(t-s)}\left|( K^cf)(s,x-v(t-s),v)\right|dvds\notag\\
&G_3(t,x):=\int_0^t\int_{\R^3} e^{-\nu(v)(t-s)}\left| \Ga(f,f)(s,x-v(t-s),v)\right|dvds.\notag
\end{align}
Here $G_1(t,x)$ can be estimated as \eqref{J21}. Indeed, by the arguments in \eqref{J1} and our definition for $\tilde{J}(v)$ in \eqref{tildeJ} and noticing that $\frac{1}{\nu(v)}e^{-\frac{|v|^2}{10}}\leq Ce^{-\frac{|v|^2}{20}}$, one gets that
\begin{align}\label{G1final}
G_1(t,x)&= \int_0^t\int_{\R^3} e^{-\nu(v)(t-s)}\left|( K^mf)(s,x-v(t-s),v)\right|dvds \notag\\
&\leq Cm^{\ga+\frac{3}{p'}}
 \int^t_0\int_{\R^3} e^{-\nu(v)(t-s)} ds\, e^{-\frac{|v|^2}{10}}\tilde{J}(v)dv\notag\\
&\leq Cm^{\ga+\frac{3}{p'}}
\int_{\R^3} e^{-\frac{|v|^2}{20}}\tilde{J}(v)dv\notag\\
&\leq Cm^{\ga+\frac{3}{p'}}
\left(\int_{\R^3} e^{-\frac{|v|^2}{20}p'}dv\right)^{\frac{1}{p'}}\left(\int_{\R^3} \left|\tilde{J}(v)\right|^pdv\right)^{\frac{1}{p}}\notag\\
&\leq
Cm^{\ga+\frac{3}{p'}}\left\| w_\be f\right\|_{L^p_vL^{\infty}_{T}L^{\infty}_{x}}.
\end{align}
Consider $G_2(t,x)$ in four cases like $J_{22}$. Recall

\begin{align*}
G_2(t,x)&=\int_0^t\int_{\R^3} e^{-\nu(v)(t-s)}\left|( K^cf)(s,x-v(t-s),v)\right|dvds\notag\\
&=\int_0^t\int_{\R^3} e^{-\nu(v)(t-s)}\left|\int_{\R^3}l(v,\eta)f(s,x-v(t-s),\eta)d\eta\right|dvds.
\end{align*}

\noindent{\it Case 1. } $t-\la\leq s\leq t$. By similar arguments as in \eqref{J22trick}, we have
\begin{align*}
G_2(t,x)&=\int_{t-\la}^t\int_{\R^3} e^{-\nu(v)(t-s)}\left|( K^cf)(s,x-v(t-s),v)\right|dvds\notag\\
&\leq \la\int_{\R^3}\int_{\R^3}\frac{1}{w_\be(v)}l(v,\eta)\frac{w_\be(v)}{w_\be(\eta)}\left\|\left(w_\be f\right)(\eta)\right\|_{L^{\infty}_{T}L^{\infty}_{x}} d\eta dv\notag\\
&\leq \la
\left(\int_{\R^3}\int_{\R^3}l(v,\eta)\left|\frac{w_\be(v)}{w_\be(\eta)}\right|^{p'} d\eta\,\frac{1}{w_{\be p'}(v)} dv\right)^\frac{1}{p'}
\left(\int_{\R^3}\int_{\R^3}l(v,\eta)dv\left\|\left(w_\be f\right)(\eta)\right\|^p_{L^{\infty}_{T}L^{\infty}_{x}} d\eta\right)^\frac{1}{p}\notag\\
&\leq \la
\left(\int_{\R^3}\frac{1}{w_{\be p'}(v)} dv\right)^\frac{1}{p'}
\left(\int_{\R^3}\left\|\left(w_\be f\right)(\eta)\right\|^p_{L^{\infty}_{T}L^{\infty}_{x}} d\eta\right)^\frac{1}{p}\notag\\
&\leq C\la \left\| w_\be f\right\|_{L^p_vL^{\infty}_{T}L^{\infty}_{x}}
\end{align*}

\noindent{\it Case 2. } $|\eta|\geq N$. Recall our assumption that $\be>36$, $\frac{1}{\nu(v)w_{\be/2}(v)}$ is bounded. We can obtain $\frac{1}{N}$ from our property of $l(v,\eta)$ in \eqref{Prol2}. Taking the $L^{\infty}_{T}L^{\infty}_{x}$ first and integrating with respect to $s$ like \eqref{EstiJ221st}, it holds that
\begin{align*}
G_2(t,x)&=\int_0^t\int_{\R^3} e^{-\nu(v)(t-s)}\left|\int_{|\eta|\geq N}l(v,\eta)f(s,x-v(t-s),\eta)d\eta\right|dvds\notag\\
&\leq \int_{|\eta|\geq N}\int_{\R^3}\frac{1}{\nu(v)w_{\be/2}(v)}l(v,\eta)\frac{w_{\be/2}(v)}{w_{\be/2}(\eta)}dv\left\|(w_{\be/2} f)(\eta)\right\|_{L^{\infty}_{T}L^{\infty}_{x}} d\eta, \notag
\end{align*}
and further one has 
\begin{align*}
G_2(t,x)
&\leq \frac{C}{N}\int_{|\eta|\geq N}\left\|(w_{\be/2} f)(\eta)\right\|_{L^{\infty}_{T}L^{\infty}_{x}} d\eta\notag\\
&\leq \frac{C}{N}\left(\int_{R^3}\frac{1}{(1+|\eta|)^{\be p'}} d\eta\right)^\frac{1}{p'}\left(\int_{|\eta|\geq N}\left\|(w_{\be} f)(\eta)\right\|^p_{L^{\infty}_{T}L^{\infty}_{x}} d\eta\right)^\frac{1}{p}\notag\\
&\leq \frac{C}{N}\left\| w_\be f\right\|_{L^p_vL^{\infty}_{T}L^{\infty}_{x}}.
\end{align*}

\noindent{\it Case 3. } $|\eta|\leq N$, $|v|\geq 2N$. Then $|v-\eta|\geq N$, similar as \eqref{EstiJ222nd}, by $e^{-\frac{N^2}{20}}\leq \frac{C}{N}$ and $\int_0^t e^{-\nu(v)(t-s)}ds\leq \frac{1}{\nu(v)}$, we obtain
\begin{align}
G_2(t,x)&\leq \iint_{|v-\eta|\geq N}\frac{1}{\nu(v)w_{\be/2}(v)}l(v,\eta)\frac{w_{\be/2}(v)}{w_{\be/2}(\eta)}e^{\frac{|v-\eta|^2}{20}}e^{-\frac{N^2}{20}}\left\|(w_{\be/2} f)(\eta)\right\|_{L^{\infty}_{T}L^{\infty}_{x}} d\eta dv\notag\\
&\leq \frac{C_m}{N}\int_{\R^3}\left\|(w_{\be/2} f)(\eta)\right\|_{L^{\infty}_{T}L^{\infty}_{x}} d\eta\notag\\
&\leq \frac{C_m}{N}\left\| w_\be f\right\|_{L^p_vL^{\infty}_{T}L^{\infty}_{x}}.\label{G2C3}
\end{align}

\noindent{\it Case 4. } $|\eta|\leq N$, $|v|\leq 2N$, $0\leq s \leq t-\la$. Approximate $l_{w_{\be}}$ by $l_N$ as \eqref{app}. Using the similar arguments in \eqref{JJ221}, \eqref{J223in} and \eqref{UseEntropy}, one gets that
\begin{align*}
\dis G_2(t,x)&\leq \int_0^{t-\la}\int_{\R^3} \frac{e^{-\nu(v)(t-s)}}{w_{\be}(v)}\left(\int_{\R^3}\left|l_{w_{\be}}(v,\eta)-l_N(v,\eta)\right|\left| f(s,x-v(t-s),\eta)\right|d\eta\right)dvds\notag\\
&\quad+\int_0^{t-\la}\int_{\R^3} \frac{e^{-\nu(v)(t-s)}}{w_{\be}(v)}\left(\int_{\R^3}\left|l_N(v,\eta)\right|\left|(w_{\be} f)(s,x-v(t-s),\eta)\right|d\eta\right)dvds\notag\\
&\leq \frac{C_m}{N}\left\| w_\be f\right\|_{L^p_vL^{\infty}_{T}L^{\infty}_{x}}+C_{m,N}\int_{\{|\eta|\leq N, |v|\leq 2N\}}\left|(w_{\be} f)(s,x-v(t-s),\eta)\right|d\eta dv\notag\\
&\leq \frac{C_m}{N}\left\| w_\be f\right\|_{L^p_vL^{\infty}_{T}L^{\infty}_{x}}+C_{m,N}\left( \la^{-\frac{3}{2}}\sqrt{\CE(F_0)}+ \la^{-3}\CE(F_0)\right).
\end{align*}
Then $G_2(t,x)$ satiesfies
\begin{align}
\dis G_2(t,x)\leq{C_m}\left(  \la+\frac{1}{N} \right)\left\| w_\be f\right\|_{L^p_vL^{\infty}_{T}L^{\infty}_{x}}+C_{m,N}\left( \la^{-\frac{3}{2}}\sqrt{\CE(F_0)}+ \la^{-3}\CE(F_0)\right).\label{G2final}
\end{align}

At last we need to bound $G_3(t,x)$ which can be divided into three parts. Choose $q$ such that $3/(3+\ga)< q< p$, denote $x_1=x-v(t-s)$. Recall
\begin{align}\label{G3First}
\dis G_3(t,x)&=\int_0^t\int_{\R^3} e^{-\nu(v)(t-s)}\left| \Ga(f,f)(s,x-v(t-s),v)\right|dvds\notag\\
&\leq C\int_0^t\int_{\R^3} e^{-\nu(v)(t-s)} \int_{\R^3} \int_{\S^2} |v-u|^\gamma e^{-\frac{|u|^2}{4}}\notag\\
&\quad \times\left(\left|f(t,x_1,u')f(t,x_1,v')\right|+\left|f(t,x_1,u)f(t,x_1,v) \right|\right)d\omega du dvds.
\end{align}

\noindent{\it Case 1. } $t-\la\leq s\leq t$. It is straightforward to see that $$\left|f(t,x_1,u')f(t,x_1,v')\right|+\left|f(t,x_1,u)f(t,x_1,v) \right|\leq \left\|f(u')f(v') \right\|_{L^{\infty}_{T}L^{\infty}_{x}}+\left\|f(u)f(v) \right\|_{L^{\infty}_{T}L^{\infty}_{x}}.
$$
We now have
\begin{align}
\dis G_3(t,x)&\leq
C\int_{t-\la}^t\int_{\R^3} e^{-\nu(v)(t-s)} \int_{\R^3} \int_{\S^2} |v-u|^\gamma e^{-\frac{|u|^2}{4}}\notag\\
&\quad \times \left(\left\|f(u')f(v') \right\|_{L^{\infty}_{T}L^{\infty}_{x}}+\left\|f(u)f(v) \right\|_{L^{\infty}_{T}L^{\infty}_{x}}\right)d\omega du dvds,\notag
\end{align}
so it hold that
\begin{align}
\dis G_3(t,x)
&\leq  C\la\int_{\R^3} \int_{\R^3} \int_{\S^2} |v-u|^\gamma e^{-\frac{|u|^2}{4}}\left(\left\|f(u')f(v') \right\|_{L^{\infty}_{T}L^{\infty}_{x}}+\left\|f(u)f(v) \right\|_{L^{\infty}_{T}L^{\infty}_{x}}\right)d\omega du dv.\label{G3in}
\end{align}
We observe that  $w_\be(v)\leq Cw_\be(u')w_\be(v')$ and
\begin{align*}
\dis &\left\|f(u')f(v') \right\|_{L^{\infty}_{T}L^{\infty}_{x}}+\left\|f(u)f(v) \right\|_{L^{\infty}_{T}L^{\infty}_{x}}\notag\\
&\leq \frac{1}{w_{\be/2}(v)}\left(Cw_{\be/2}(v)\left\|f(u')f(v') \right\|_{L^{\infty}_{T}L^{\infty}_{x}}+\left\|f(u)\left(w_{\be/2}f\right)(v) \right\|_{L^{\infty}_{T}L^{\infty}_{x}}\right)\notag\\
&\leq \frac{1}{w_{\be/2}(v)}\left(Cw_{\be/2}(u')w_{\be/2}(v')\left\|f(u')f(v') \right\|_{L^{\infty}_{T}L^{\infty}_{x}}+\left\|f(u)\left(w_{\be/2}f\right)(v) \right\|_{L^{\infty}_{T}L^{\infty}_{x}}\right)\notag\\
&\leq \frac{C}{w_{\be/2}(v)}\left(\left\|\left(w_{\be/2}f\right)(u')\left(w_{\be/2}f\right)(v') \right\|_{L^{\infty}_{T}L^{\infty}_{x}}+\left\|\left(w_{\be/2}f\right)(u)\left(w_{\be/2}f\right)(v) \right\|_{L^{\infty}_{T}L^{\infty}_{x}}\right).
\end{align*}
Applying the above inequality to \eqref{G3in} and using H\"older's inequality as \eqref{InequlityGa-}, by $dudv=du'dv'$, we yield
\begin{align}
\dis
G_3(t,x)&\leq C\la\int_{\R^3} \int_{\R^3} \int_{\S^2} \frac{|v-u|^\gamma}{w_{\be/2}(v)} e^{-\frac{|u|^2}{4}}\left(\left\|\left(w_{\be/2}f\right)(u')\left(w_{\be/2}f\right)(v') \right\|_{L^{\infty}_{T}L^{\infty}_{x}}\right.\notag\\
&\qquad\qquad\qquad\qquad\qquad\qquad\qquad\quad\left.+\left\|\left(w_{\be/2}f\right)(u)\left(w_{\be/2}f\right)(v) \right\|_{L^{\infty}_{T}L^{\infty}_{x}}\right)d\omega dudv\notag \\
&\leq C\la\left(\int_{\R^3} \int_{\R^3} \int_{\S^2}\left\|\left(w_{\be/2}f\right)(u')\left(w_{\be/2}f\right)(v') \right\|^q_{L^{\infty}_{T}L^{\infty}_{x}} d\omega dudv\right)^\frac{1}{q}\notag\\
&\quad+ C\la\left(\int_{\R^3} \int_{\R^3} \int_{\S^2}\left\|\left(w_{\be/2}f\right)(u)\left(w_{\be/2}f\right)(v) \right\|^q_{L^{\infty}_{T}L^{\infty}_{x}} d\omega du dv\right)^\frac{1}{q}\notag \\
&\leq C\la \left\| w_\be f\right\|^2_{L^p_vL^{\infty}_{T}L^{\infty}_{x}}.\label{G3C1}
\end{align}

\noindent{\it Case 2. } $|u|\geq N$ or $|v|\geq N$. Set $q'=\frac{q}{1-q}$. It follows from similar arguments as \eqref{G2C3} and \eqref{G3C1} that 
\begin{align}
\dis G_3(t,x)&\leq C\iint_{\{|u|\geq N\}\cup\{|v|\geq N\}} \int_{\S^2} \frac{|v-u|^\gamma}{\nu(v)w_{\be/2}(v)} e^{-\frac{|u|^2}{4}}\left(\left\|\left(w_{\be/2}f\right)(u')\left(w_{\be/2}f\right)(v') \right\|_{L^{\infty}_{T}L^{\infty}_{x}}\right.\notag\\
&\qquad\qquad\qquad\qquad\qquad\qquad\qquad\qquad\left.+\left\|\left(w_{\be/2}f\right)(u)\left(w_{\be/2}f\right)(v) \right\|_{L^{\infty}_{T}L^{\infty}_{x}}\right)d\omega du dv\notag \\
&\leq C\left(\iint_{\{|u|\geq N\}\cup\{|v|\geq N\}}  \int_{\S^2}\left\|\left(w_{\be/2}f\right)(u')\left(w_{\be/2}f\right)(v') \right\|^q_{L^{\infty}_{T}L^{\infty}_{x}} d\omega du dv\right)^\frac{1}{q}\notag\\
&\quad\times\left(\iint_{\{|u|\geq N\}\cup\{|v|\geq N\}} \left| \frac{|v-u|^\ga e^{-\frac{|u|^2}{4}}}{\nu(v)w_{\be/2}(v)} \right|^{q'}dudv\right)^{\frac{1}{q'}}.
\label{G3C2in}
\end{align}
Consider $|v|\geq N$ first. We note that $q>3/(3+\ga)$, $\ga q'>-3$, which yields
\begin{align}
\iint_{|v|\geq N} \left| \frac{|v-u|^\ga e^{-\frac{|u|^2}{4}}}{\nu(v)w_{\be/2}(v)} \right|^{q'}dudv\leq
C\int_{|v|\geq N}\frac{1}{(1+|v|)^{\be q'/2}}dv\leq \frac{C}{N^{\frac{\be q'}{2}-3}}.\label{G3C2v}
\end{align}
Then we turn to $|u|\geq N$. Since $e^{-\frac{|u|^2q'}{4}}$ can be controlled by $\frac{1}{(1+|u|)^\al}$ for any $\al>0$,
\begin{align}
\iint_{|u|\geq N} \left| \frac{|v-u|^\ga e^{-\frac{|u|^2}{4}}}{\nu(v)w_{\be/2}(v)} \right|^{q'}dvdu\leq
C\int_{|u|\geq N}(1+|u|)^{\ga q'}e^{-\frac{|u|^2q'}{4}}du\leq  \frac{C}{N^{\frac{\be q'}{2}-3}}.\label{G3C2u}
\end{align}
Hence after taking $L^p_vL^{\infty}_{T}L^{\infty}_{x}$ norm, by \eqref{G3C2in}, \eqref{G3C2v}, \eqref{G3C2u} and the assumption that $\be>36>-2\ga$, we have
\begin{align}\label{G3C2final}
\dis G_3(t,x)\leq \frac{C}{N^{\frac{\be}{2}-\frac{3}{q'}}}\left\| w_\be f\right\|^2_{L^p_vL^{\infty}_{T}L^{\infty}_{x}}\leq \frac{C}{N^{\frac{\be}{2}-3}}\left\| w_\be f\right\|^2_{L^p_vL^{\infty}_{T}L^{\infty}_{x}}.
\end{align}

\noindent{\it Case 3. } $|u|\leq N$ and $|v|\leq N$, $0\leq s\leq t-\la$, $x_1=x-v(t-s)$. Our first estimate \eqref{G3First} for $G_3(t,x)$ shows that
\begin{align*}
\dis G_3(t,x)&\leq \int_0^{t-\la}\int_{|v|\leq N} e^{-\nu(v)(t-s)}\left(
\int_{|u|\leq N} \int_{\S^2} |v-u|^\gamma e^{-\frac{|u|^2}{4}}\left|f(s,x_1,u)f(s,x_1,v)\right|d\omega du \right)
dvds\notag\\
&\ \, + \int_0^{t-\la}\int_{|v|\leq N} e^{-\nu(v)(t-s)}\left(
\int_{|u|\leq N} \int_{\S^2} |v-u|^\gamma e^{-\frac{|u|^2}{4}}\left|f(s,x_1,u')f(s,x_1,v')\right|d\omega du \right)
dvds\notag\\
&=G_{31}(t,x)+G_{32}(t,x).
\end{align*}
We focus on $G_{31}(t,x)$ first, denote $\nu_N=\inf_{|v|\leq 3N}|\nu(v)|>0$. It follows from the similar arguments in \eqref{InequlityGa-} and \eqref{interpo} that
\begin{align}
\dis G_{31}(t,x)&\leq \int_0^{t-\la}\int_{|v|\leq N} e^{-\nu_N(t-s)}\left(
\int_{|u|\leq N} \int_{\S^2} |v-u|^\gamma e^{-\frac{|u|^2}{4}}\left|f(s,x_1,u)f(s,x_1,v)\right|d\omega du\right)
dvds\notag\\
&\leq C\int_0^{t-\la} e^{-\nu_N(t-s)}\left(\iint_{\{|u|\leq N, |v|\leq N\}}|f(s,x_1,u) |^q |f(s,x_1,v) |^q dudv\right)^\frac{1}{q}ds\notag\\
&\leq C \left\| w_\be f\right\|^{\frac{2p(q-1)}{q(p-1)}}_{L^p_vL^{\infty}_{T}L^{\infty}_{x}} \int_0^{t-\la} e^{-\nu_N(t-s)} \iint_{\{|u|\leq N, |v|\leq N\}}|f(s,x_1,u) | |f(s,x_1,v) | dudvds.\label{G31first}
\end{align}
Also by \eqref{UseEntropy} and Lemma \ref{Taylor}, using H\"older's inequality repeatedly, we obtain
\begin{align*}
 &\iint_{\{|u|\leq N, |v|\leq N\}}|f(s,x-v(t-s),u) | |f(s,x-v(t-s),v) | dudv\notag\\
&\leq\left( \iint_{\{|u|\leq N, |v|\leq N\}}|f(s,x-v(t-s),u) |  dudv\right)^{\frac{1}{p'}}\notag\\
&\qquad\times\left( \iint_{\{|u|\leq N, |v|\leq N\}}\|f(u) \|_{L^{\infty}_{T}L^{\infty}_{x}}\|f(v) \|^p_{L^{\infty}_{T}L^{\infty}_{x}}dvdu\right)^{\frac{1}{p}}\notag\\
&\leq C_N\left\| w_\be f\right\|_{L^p_vL^{\infty}_{T}L^{\infty}_{x}}
\left( \int_{\{|u|\leq N\}}\|f(u) \|_{L^{\infty}_{T}L^{\infty}_{x}}du\right)^{\frac{1}{p}}
\left( \la^{-\frac{3}{2}}\sqrt{\CE(F_0)}+ \la^{-3}\CE(F_0)\right)^\frac{1}{p'}\notag\\
&\leq C_N\left\| w_\be f\right\|^{1+\frac{1}{p}}_{L^p_vL^{\infty}_{T}L^{\infty}_{x}}\left( \la^{-\frac{3}{2}}\sqrt{\CE(F_0)}+ \la^{-3}\CE(F_0)\right)^\frac{1}{p'}.
\end{align*}
Thus, after subtituting the above inequality into \eqref{G31first} and integrating with respect to $s$, we have 
\begin{align}\label{G31final}
\dis G_{31}(t,x)\leq C_N\left\| w_\be f\right\|^{1+\frac{1}{p}+\frac{2p(q-1)}{q(p-1)}}_{L^p_vL^{\infty}_{T}L^{\infty}_{x}}\left( \la^{-\frac{3}{2}}\sqrt{\CE(F_0)}+ \la^{-3}\CE(F_0)\right)^\frac{1}{p'}.
\end{align}
Finally we turn to $G_{32}(t,x)$, as how we treat $G_{31}$ in \eqref{G31first},
\begin{align}
G_{32}(t,x)
&\leq C\int_0^{t-\la} e^{-\nu_N(t-s)}\left(\iint_{\{|u|\leq N, |v|\leq N\}}\int_{\S^2}e^{-\frac{|u|^2}{4}}|f(s,x_1,u') |^q |f(s,x_1,v') |^q d\omega dudv\right)^\frac{1}{q}ds.\label{G32first}
\end{align}
Notice that differently from \eqref{G31first}, this time we keep $e^{-\frac{|u|^2}{4}}$ inside the integral. A similar argument as \eqref{Gama+} shows that
\begin{align*}
\dis &\left(\iint_{\{|u|\leq N, |v|\leq N\}}\int_{\S^2}e^{-\frac{|u|^2}{4}}|f(s,x_1,u') |^q |f(s,x_1,v') |^q d\omega du dv\right)^\frac{1}{q}\notag\\
&\leq   \left(\iint_{\{|u|\leq N, |v|\leq N\}}\int_{\S^2}e^{-\frac{|u|^2}{4}}|f(s,x_1,v') |^\frac{1}{4} d\omega du dv\right)^{\frac{1}{q}-\frac{1}{p} }\notag\\
&\qquad\times
\left(\iint_{\{|u|\leq N, |v|\leq N\}}\int_{\S^2}e^{-\frac{|u|^2}{4}}|f(s,x_1,u') |^p |f(s,x_1,v') |^r d\omega du dv\right)^\frac{1}{p}
\notag\\
&\leq C \left\| w_\be f\right\|^{1+\frac{r}{p}}_{L^p_vL^{\infty}_{T}L^{\infty}_{x}} \left(\int_{\{|u|\leq N, |v|\leq N\}}\int_{\S^2}e^{-\frac{|u|^2}{4}}|f(s,x_1,v') |^\frac{1}{4} d\omega du dv\right)^{\frac{1}{q}-\frac{1}{p} }.
\end{align*}
Since we have $v'=v+\left[(u-v)\cdot \omega \right]\omega$, $|v'|\leq 3N$, $x_1=x-v(t-s)$, it holds that
\begin{align*}
\dis &\iint_{\{|u|\leq N, |v|\leq N\}}\int_{\S^2}e^{-\frac{|u|^2}{4}}|f(s,x_1,v') |^\frac{1}{4} d\omega du dv\notag\\
&\leq C_N\iint_{\{|\eta|\leq 3N, |v|\leq N\}}\int_{z_\perp}|f(s,x_1,\eta)|^\frac{1}{4}\frac{1}{|\eta-v|^{2}}e^{-\frac{|z_\perp+\eta|^2}{4}}dz_\perp d\eta dv \notag \\
&\leq C_N\left( \iint_{\{|\eta|\leq 3N, |v|\leq N\}}|f(s,x_1,\eta)|d\eta dv\right)^\frac{1}{4}
\left( \iint_{\{|\eta|\leq 3N, |v|\leq N\}}\frac{1}{|\eta-v|^{\frac{8}{3}}}d\eta dv\right)^\frac{3}{4}
\notag \\
&\leq C_N\left( \la^{-\frac{3}{2}}\sqrt{\CE(F_0)}+ \la^{-3}\CE(F_0)\right)^\frac{1}{4}.
\end{align*}
Together with \eqref{G32first}, we get
\begin{align}\label{G32final}
\dis G_{32}(t,x)\leq C_N\left( \la^{-\frac{3}{2}}\sqrt{\CE(F_0)}+ \la^{-3}\CE(F_0)  \right)^{\frac{1}{4}\left(\frac{1}{q}-\frac{1}{p} \right)}
\left\| w_\be f\right\|_{L^p_vL^{\infty}_{T}L^{\infty}_{x}}^{1+\frac{r}{p}}.
\end{align}
From \eqref{G31final} and \eqref{G32final}, for \noindent{\it Case 3}, we have
\begin{align}\label{G3C3final}
G_3(t,x)\leq &C_N\left(\la^{-\frac{3}{2}}\sqrt{\CE(F_0)}+ \la^{-3}\CE(F_0) \right)^{\frac{1}{p'}}\left\| w_\be f\right\|_{L^p_vL^{\infty}_{T}L^{\infty}_{x}}^{1+\frac{1}{p}+\frac{2p(q-1)}{q(p-1)}}\notag\\
&+C_N\left( \la^{-\frac{3}{2}}\sqrt{\CE(F_0)}+ \la^{-3}\CE(F_0)  \right)^{\frac{1}{4}\left(\frac{1}{q}-\frac{1}{p} \right)}
\left\| w_\be f\right\|_{L^p_vL^{\infty}_{T}L^{\infty}_{x}}^{1+\frac{r}{p}}.
\end{align}
Using \eqref{G3C1}, \eqref{G3C2final}, \eqref{G3C3final} we obtain the estimate for $G_3(t,x)$ that
\begin{align}
G_3(t,x)\leq &C\left( \la+\frac{1}{N^{{\frac{\be}{2}}-3}} \right)\left\| w_\be f\right\|_{L^p_vL^{\infty}_{T}L^{\infty}_{x}}^2\notag\\
&+C_N\left(\la^{-\frac{3}{2}}\sqrt{\CE(F_0)}+ \la^{-3}\CE(F_0) \right)^{\frac{1}{p'}}\left\| w_\be f\right\|_{L^p_vL^{\infty}_{T}L^{\infty}_{x}}^{1+\frac{1}{p}+\frac{2p(q-1)}{q(p-1)}}\notag\\
&+C_N\left( \la^{-\frac{3}{2}}\sqrt{\CE(F_0)}+ \la^{-3}\CE(F_0)  \right)^{\frac{1}{4}\left(\frac{1}{q}-\frac{1}{p} \right)}
\left\| w_\be f\right\|_{L^p_vL^{\infty}_{T}L^{\infty}_{x}}^{1+\frac{r}{p}}.\label{G3final}
\end{align}
According to \eqref{L1first}, \eqref{G1final}, \eqref{G2final}, \eqref{G3final}, the estimate \eqref{LemmaL1v} follows. This completes the proof of Lemma \ref{L1v}.  
\end{proof}

\subsection{Global existence}
With all the discussions above, we can prove Theorem \ref{global} now. Including the assumptions of Theorem \ref{local} and Theorem \ref{global}, we make the $a$ $priori$ assumption
\begin{align*}
\left\| w_\be f\right\|_{L^p_vL^{\infty}_{T}L^{\infty}_{x}}\leq 2A=2C_2\left(M^2+\sqrt{\CE(F_0)}+\CE(F_0)\right),
\end{align*}
where $M>1$, $\left\| w_\be f_0\right\|_{L^p_vL^{\infty}_{x}}<M$ and $C_2$ is defined in Lemma \ref{Estimate}. Then by Lemma \ref{Estimate}, one gets that
 \begin{align}\label{L1esti1}
\left\| w_\be f\right\|_{L^p_vL^{\infty}_{T}L^{\infty}_{x}}&\leq A+C_2 \left(2A\right)^{1+\frac{p(q-1)}{q(p-1)}}
 \left\|f\right\|_{L^{\infty}_{T_1,T}L^{\infty}_{x}L^1_v}^{\frac{p-q}{q(p-1)}}+C_2 \left(2A\right)^{\frac{1}{8}\left(\frac{1}{q}-\frac{1}{p}\right)+1+\frac{r}{q}}\left\|f\right\|_{L^{\infty}_{T_1,T}L^{\infty}_{x}L^1_v}^{\frac{1}{8}\left(\frac{1}{q}-\frac{1}{p}\right)}.
\end{align}
It follows from Lemma \ref{L1v} that
\begin{align*}
\int_{\R^3}\left|f(t,x,v)\right|dv&\leq
\int_{\R^3}e^{-\nu(v)t}\left|f_0(x-vt,v)\right|dv+\left(Cm^{\ga+\frac{3}{p'}}+C \la+\frac{C_m}{N}\right)(2A)\notag\\
&+C\left( \la+\frac{1}{N^{{\frac{\be}{2}}-3}} \right)(2A)^2\notag\\
&+C_N\left(\la^{-\frac{3}{2}}\sqrt{\CE(F_0)}+ \la^{-3}\CE(F_0) \right)^{\frac{1}{p'}}(2A)^{1+\frac{1}{p}+\frac{2p(q-1)}{q(p-1)}}\notag\\
&+C_N\left( \la^{-\frac{3}{2}}\sqrt{\CE(F_0)}+ \la^{-3}\CE(F_0)  \right)^{\frac{1}{4}\left(\frac{1}{q}-\frac{1}{p} \right)}
(2A)^{1+\frac{r}{p}}.
\end{align*}
Also recall from Theorem \ref{local} that $T_1=\frac{1}{6C_1(1+\|w_\be f_0\|_{L^p_vL^{\infty}_x})}>\frac{C}{M}$. We consider the case that $t\geq T_1$.
If $\Omega=\R^3$,
 \begin{align*}
\int_{\R^3}e^{-\nu(v)t}\left|f_0(x-vt,v)\right|dv \leq \frac{1}{T_1^3}\|f_0\|_{L^1_xL^\infty_v}\leq CM^3\|f_0\|_{L^1_xL^\infty_v}.
\end{align*}
If $\Omega=\T^3$, by $\int_{\{|v|\leq M_1\}}\left|f_0(x-vt,v)\right|dv\leq C\frac{(1+M_1t)^3}{t^3}\int_\Omega\|f_0(y)\|_{L^\infty_v}dy$, we obtain
 \begin{align*}
\dis \int_{\R^3}e^{-\nu(v)t}\left|f_0(x-vt,v)\right|dv &\leq  \int_{\{|v|\geq M_1\}}\left|f_0(x-vt,v)\right|dv+\int_{\{|v|\leq M_1\}}\left|f_0(x-vt,v)\right|dv\notag\\
&\leq \int_{\{|v|\geq M_1\}}\left|f_0(x-vt,v)\right|dv+C\left\{ M_1^3\|f_0\|_{L^1_xL^\infty_v} +M^3\|f_0\|_{L^1_xL^\infty_v}\right\} \notag\\
&\leq  M_1^{\frac{3}{p'}-\be} \left\| w_\be f_0\right\|_{L^p_vL^{\infty}_{x}}+CM_1^3\|f_0\|_{L^1_xL^\infty_v} +CM^3\|f_0\|_{L^1_xL^\infty_v}.
\end{align*}
By choosing $M_1=\left( \frac{\left\| w_\be f_0\right\|_{L^p_vL^{\infty}_{x}}}{ \|f_0\|_{L^1_xL^\infty_v}} \right)^\frac{1}{3+\be-\frac{3}{p'}}$, we have

 \begin{align*}
\dis\int_{\R^3}e^{-\nu(v)t}\left|f_0(x-vt,v)\right|dv &\leq  C\left\| w_\be f_0\right\|^{\frac{3}{3+\be-\frac{3}{p'}}}_{L^p_vL^{\infty}_{x}} \|f_0\|^{1-\frac{3}{3+\be-\frac{3}{p'}}}_{L^1_xL^\infty_v}+CM^3\|f_0\|_{L^1_xL^\infty_v}\notag\\
&\leq  CM^{\frac{3}{3+\be-\frac{3}{p'}}} \|f_0\|^{1-\frac{3}{3+\be-\frac{3}{p'}}}_{L^1_xL^\infty_v}+CM^3\|f_0\|_{L^1_xL^\infty_v}
\end{align*}
Then we can first choose $m$, $\la$ small, $N$ large, and then let $\max\{\CE(F_0),\,\|f_0\|_{L^1_xL^\infty_v}\}\leq \eps$ for some $\eps$ which depends on $\be$, $\ga$, $M$ such that 
 \begin{align}\label{L1esti2}
2C_2 \left(2A\right)^{\frac{p(q-1)}{q(p-1)}}
 \left\|f\right\|_{L^{\infty}_{T_1,T}L^{\infty}_{x}L^1_v}^{\frac{p-q}{q(p-1)}}+2C_2 \left(2A\right)^{\frac{1}{8}\left(\frac{1}{q}-\frac{1}{p}\right)+\frac{r}{q}}\left\|f\right\|_{L^{\infty}_{T_1,T}L^{\infty}_{x}L^1_v}^{\frac{1}{8}\left(\frac{1}{q}-\frac{1}{p}\right)} \leq \frac{1}{2}.
\end{align}
Using \eqref{L1esti1},\eqref{L1esti2}, we directly obtain that 
\begin{align*}
\left\| w_\be f\right\|_{L^p_vL^{\infty}_{T}L^\infty_x}&\leq \frac{3}{2}A.
\end{align*}
We have closed the $a$ $priori$ assumption. Naturally the estimate \eqref{GE} holds. Hence, the proof of Theorem \ref{global} is finished.

\medskip
\noindent {\bf Acknowledgments:}\,
This work is supported by the Hong Kong PhD Fellowship Scheme. The author would like to thank Professor Renjun Duan for his support and encouragement during the PhD studies in CUHK.

\end{document}